\newtheorem{theorem}{Theorem}
\newtheorem{lemma}[theorem]{Lemma}
\newtheorem{corollary}[theorem]{Corollary}
\newtheorem{proposition}[theorem]{Proposition}
\newtheorem{remark}[theorem]{Remark}
\numberwithin{equation}{section}
\newcommand{\tto}{\twoheadrightarrow}
\begin{document}

\title[representations of partial Brauer algebras]{On the representation theory\\ of partial Brauer algebras}
\author{Paul Martin and Volodymyr Mazorchuk}

\begin{abstract}
In this paper we study the partial Brauer $\mathbb{C}$-algebras $\mathfrak{R}_n(\delta,\delta')$, where $n \in 
\mathbb{N}$ and $\delta,\delta'\in\mathbb{C}$. We show that these algebras are generically semisimple, construct 
the Specht modules and determine the Specht module restriction rules for the restriction $\mathfrak{R}_{n-1} 
\hookrightarrow \mathfrak{R}_n$. We also determine the corresponding decomposition matrix, and the Cartan decomposition 
matrix.
\end{abstract}
\maketitle

\section{Introduction and description of the results}\label{s1}

Let $\Bbbk$ be a commutative ring, and $\Bbbk^{\times}$ its group of units. We denote by $\mathbb{N}$ and 
$\mathbb{N}_0$ the sets of all positive integers and all nonnegative integers, respectively. For
$n\in\mathbb{N}_0$ we set $\underline{n}:=\{1,2,\dots,n\}$, $\underline{n}':=\{1',2',\dots,n'\}$ and
$\mathbf{n}:=\underline{n}\cup \underline{n}'$ (the later union is automatically disjoint).
In this paper we will use $\equiv$ to denote Morita equivalence.

For each choice of $\delta\in \Bbbk$ and $n\in\mathbb{N}_0$ 
the partition algebra $\mathfrak{P}_n(\delta)$, defined in 
\cite{Martin94}, has a $\Bbbk$-basis consisting of all 
partitions of the set 
$\mathbf{n}$.
Equivalently one may use a basis of  {\em partition diagrams} 
 representing partitions of  $\mathbf{n}$.
The composition can be summarised as 
 first juxtaposing  partition diagrams 
(as in Figure~\ref{fig1})  
and then applying a 
$\delta$-dependent {\em straightening rule}, i.e. a rule for writing any juxtaposition as a scalar multiple of some 
diagram. 
The rule involves removing isolated connected components from the 
inside of the juxtaposed diagrams; and each 
removed component contributes a factor $\delta$ to the product, 
see \cite{Martin94} (or \S2) for details. 
The partition algebra 
$\mathfrak{P}_n(\delta)$ has a well-known  
unital diagram subalgebra (i.e. a subalgebra with basis a subset of partition
diagrams) called the {\em Brauer algebra}. 
This algebra,  
denoted 
$\mathfrak{B}_n(\delta)$,
was defined in 
\cite{Brauer37}. 
The basis of $\mathfrak{B}_n(\delta)$ is formed by so-called {\em Brauer partitions} or 
{\em  
pair partitions}, that is partitions of $\mathbf{n}$ into pairs 
(in this case the partition diagrams are {\em Brauer diagrams} --- 
exemplified by Figure~\ref{fig1}(a), where left-side dots
represent elements $1,2,\dots,n$ numbered from top to bottom 
and right-side dots represent elements $1',2',\dots,n'$ 
numbered from top to bottom). 
In between $\mathfrak{P}_n(\delta)$ and  $\mathfrak{B}_n(\delta)$ 
there is another diagram subalgebra,
denoted 
$\mathcal{P}\mathfrak{B}_n(\delta)$,
with the basis consisting of all partitions of $\mathbf{n}$ into pairs and singletons (the so-called 
{\em partial Brauer partitions}).
The semigroup version of this algebra appears explicitly in \cite{Mazorchuk95}
while the algebra itself appears implicitly in \cite{GrimmWarnaar95}.

In the process of straightening the juxtaposition of two partial Brauer diagrams there are two topologically 
different connected components which can be removed, namely loops and open strings. Assigning a factor $\delta\in \Bbbk$
to each removed loop and a factor $\delta'\in \Bbbk$ to each removed open string gives a $2$-parameter version of
$\mathcal{P}\mathfrak{B}_n(\delta)$, see \cite{Mazorchuk00}, which in this paper we will denote by 
$\mathfrak{R}_n(\delta,\delta')$ to simplify notation. This is the {\em partial Brauer algebra} that is the main 
object of our study. Obviously, we have $\mathcal{P}\mathfrak{B}_n(\delta)=\mathfrak{R}_n(\delta,\delta)$.
The multiplication rule for $\mathfrak{R}_n(\delta,\delta')$ is illustrated by Figure~\ref{fig1}(b).

\begin{figure}
\special{em:linewidth 0.4pt} \unitlength 0.80mm
(a)
\begin{picture}(55.00,75.00)
\put(05.00,50.00){\makebox(0,0)[cc]{$\bullet$}}
\put(05.00,42.50){\makebox(0,0)[cc]{$\bullet$}}
\put(05.00,35.00){\makebox(0,0)[cc]{$\bullet$}}
\put(05.00,27.50){\makebox(0,0)[cc]{$\bullet$}}
\put(05.00,20.00){\makebox(0,0)[cc]{$\bullet$}}
\put(05.00,12.50){\makebox(0,0)[cc]{$\bullet$}}
\put(05.00,05.00){\makebox(0,0)[cc]{$\bullet$}}

\put(30.00,50.00){\makebox(0,0)[cc]{$\bullet$}}
\put(30.00,45.00){\makebox(0,0)[cc]{$\bullet$}}
\put(30.00,40.00){\makebox(0,0)[cc]{$\bullet$}}
\put(30.00,35.00){\makebox(0,0)[cc]{$\bullet$}}
\put(30.00,25.00){\makebox(0,0)[cc]{$\bullet$}}
\put(30.00,20.00){\makebox(0,0)[cc]{$\bullet$}}
\put(30.00,10.00){\makebox(0,0)[cc]{$\bullet$}}

\dashline{1}(05.00,55.00)(30.00,55.00)
\dashline{1}(05.00,00.00)(30.00,00.00)
\dashline{1}(05.00,00.00)(05.00,55.00)
\dashline{1}(30.00,00.00)(30.00,55.00)
\thicklines
\drawline(05.00,35.00)(30.00,25.00)
\linethickness{1pt}
\qbezier(30,50)(25,47.50)(30,45)
\qbezier(30,40)(25,37.50)(30,35)
\qbezier(30,20)(25,15)(30,10)
\qbezier(5,5)(10,08.75)(5,12.50)
\qbezier(5,20)(10,31.25)(5,42.50)
\qbezier(5,27.50)(15,38.75)(5,50)

\end{picture}
\quad (b) 
\begin{picture}(105.00,75.00)
\put(17.50,70.00){\makebox(0,0)[cc]{$\beta$}}
\put(47.50,70.00){\makebox(0,0)[cc]{$\alpha$}}
\put(87.50,70.00){\makebox(0,0)[cc]{$\beta\circ\alpha$}}
\put(05.00,60.00){\makebox(0,0)[cc]{\tiny $Z$}}
\put(30.00,60.00){\makebox(0,0)[cc]{\tiny $Y$}}
\put(35.00,60.00){\makebox(0,0)[cc]{\tiny $Y$}}
\put(60.00,60.00){\makebox(0,0)[cc]{\tiny $X$}}
\put(75.00,60.00){\makebox(0,0)[cc]{\tiny $Z$}}
\put(100.00,60.00){\makebox(0,0)[cc]{\tiny $X$}}
\put(05.00,50.00){\makebox(0,0)[cc]{$\bullet$}}
\put(05.00,45.00){\makebox(0,0)[cc]{$\bullet$}}
\put(05.00,40.00){\makebox(0,0)[cc]{$\bullet$}}
\put(05.00,35.00){\makebox(0,0)[cc]{$\bullet$}}
\put(05.00,30.00){\makebox(0,0)[cc]{$\bullet$}}
\put(05.00,25.00){\makebox(0,0)[cc]{$\bullet$}}
\put(05.00,20.00){\makebox(0,0)[cc]{$\bullet$}}
\put(05.00,15.00){\makebox(0,0)[cc]{$\bullet$}}
\put(05.00,10.00){\makebox(0,0)[cc]{$\bullet$}}
\put(05.00,05.00){\makebox(0,0)[cc]{$\bullet$}}
\put(30.00,50.00){\makebox(0,0)[cc]{$\bullet$}}
\put(30.00,45.00){\makebox(0,0)[cc]{$\bullet$}}
\put(30.00,40.00){\makebox(0,0)[cc]{$\bullet$}}
\put(30.00,35.00){\makebox(0,0)[cc]{$\bullet$}}
\put(30.00,30.00){\makebox(0,0)[cc]{$\bullet$}}
\put(30.00,25.00){\makebox(0,0)[cc]{$\bullet$}}
\put(30.00,20.00){\makebox(0,0)[cc]{$\bullet$}}
\put(30.00,15.00){\makebox(0,0)[cc]{$\bullet$}}
\put(30.00,10.00){\makebox(0,0)[cc]{$\bullet$}}
\put(30.00,05.00){\makebox(0,0)[cc]{$\bullet$}}
\put(35.00,50.00){\makebox(0,0)[cc]{$\bullet$}}
\put(35.00,45.00){\makebox(0,0)[cc]{$\bullet$}}
\put(35.00,40.00){\makebox(0,0)[cc]{$\bullet$}}
\put(35.00,35.00){\makebox(0,0)[cc]{$\bullet$}}
\put(35.00,30.00){\makebox(0,0)[cc]{$\bullet$}}
\put(35.00,25.00){\makebox(0,0)[cc]{$\bullet$}}
\put(35.00,20.00){\makebox(0,0)[cc]{$\bullet$}}
\put(35.00,15.00){\makebox(0,0)[cc]{$\bullet$}}
\put(35.00,10.00){\makebox(0,0)[cc]{$\bullet$}}
\put(35.00,05.00){\makebox(0,0)[cc]{$\bullet$}}
\put(60.00,50.00){\makebox(0,0)[cc]{$\bullet$}}
\put(60.00,45.00){\makebox(0,0)[cc]{$\bullet$}}
\put(60.00,40.00){\makebox(0,0)[cc]{$\bullet$}}
\put(60.00,35.00){\makebox(0,0)[cc]{$\bullet$}}
\put(60.00,30.00){\makebox(0,0)[cc]{$\bullet$}}
\put(60.00,25.00){\makebox(0,0)[cc]{$\bullet$}}
\put(60.00,20.00){\makebox(0,0)[cc]{$\bullet$}}
\put(60.00,15.00){\makebox(0,0)[cc]{$\bullet$}}
\put(60.00,10.00){\makebox(0,0)[cc]{$\bullet$}}
\put(60.00,05.00){\makebox(0,0)[cc]{$\bullet$}}
\put(100.00,50.00){\makebox(0,0)[cc]{$\bullet$}}
\put(100.00,45.00){\makebox(0,0)[cc]{$\bullet$}}
\put(100.00,40.00){\makebox(0,0)[cc]{$\bullet$}}
\put(100.00,35.00){\makebox(0,0)[cc]{$\bullet$}}
\put(100.00,30.00){\makebox(0,0)[cc]{$\bullet$}}
\put(100.00,25.00){\makebox(0,0)[cc]{$\bullet$}}
\put(100.00,20.00){\makebox(0,0)[cc]{$\bullet$}}
\put(100.00,15.00){\makebox(0,0)[cc]{$\bullet$}}
\put(100.00,10.00){\makebox(0,0)[cc]{$\bullet$}}
\put(100.00,05.00){\makebox(0,0)[cc]{$\bullet$}}
\put(75.00,50.00){\makebox(0,0)[cc]{$\bullet$}}
\put(75.00,45.00){\makebox(0,0)[cc]{$\bullet$}}
\put(75.00,40.00){\makebox(0,0)[cc]{$\bullet$}}
\put(75.00,35.00){\makebox(0,0)[cc]{$\bullet$}}
\put(75.00,30.00){\makebox(0,0)[cc]{$\bullet$}}
\put(75.00,25.00){\makebox(0,0)[cc]{$\bullet$}}
\put(75.00,20.00){\makebox(0,0)[cc]{$\bullet$}}
\put(75.00,15.00){\makebox(0,0)[cc]{$\bullet$}}
\put(75.00,10.00){\makebox(0,0)[cc]{$\bullet$}}
\put(75.00,05.00){\makebox(0,0)[cc]{$\bullet$}}
\dashline{1}(05.00,55.00)(30.00,55.00)
\dashline{1}(35.00,55.00)(60.00,55.00)
\dashline{1}(75.00,55.00)(100.00,55.00)
\dashline{1}(05.00,00.00)(30.00,00.00)
\dashline{1}(35.00,00.00)(60.00,00.00)
\dashline{1}(75.00,00.00)(100.00,00.00)
\dashline{1}(75.00,00.00)(75.00,55.00)
\dashline{1}(100.00,00.00)(100.00,55.00)
\dashline{1}(05.00,00.00)(05.00,55.00)
\dashline{1}(35.00,00.00)(35.00,55.00)
\dashline{1}(30.00,00.00)(30.00,55.00)
\dashline{1}(60.00,00.00)(60.00,55.00)
\dottedline[.]{1}(30.00,50.00)(35.00,50.00)
\dottedline[.]{1}(30.00,45.00)(35.00,45.00)
\dottedline[.]{1}(30.00,40.00)(35.00,40.00)
\dottedline[.]{1}(30.00,35.00)(35.00,35.00)
\dottedline[.]{1}(30.00,30.00)(35.00,30.00)
\dottedline[.]{1}(30.00,25.00)(35.00,25.00)
\dottedline[.]{1}(30.00,20.00)(35.00,20.00)
\dottedline[.]{1}(30.00,15.00)(35.00,15.00)
\dottedline[.]{1}(30.00,10.00)(35.00,10.00)
\dottedline[.]{1}(30.00,05.00)(35.00,05.00)
\thicklines
\drawline(05.00,35.00)(30.00,25.00)
\drawline(35.00,50.00)(60.00,45.00)
\drawline(35.00,45.00)(60.00,05.00)
\drawline(35.00,40.00)(60.00,25.00)
\drawline(75.00,35.00)(100.00,25.00)
\linethickness{1pt}
\qbezier(30,50)(25,47.50)(30,45)
\qbezier(30,40)(25,37.50)(30,35)
\qbezier(30,20)(25,15)(30,10)
\qbezier(5,5)(10,08.75)(5,15)
\qbezier(5,20)(10,31.25)(5,45)
\qbezier(5,30)(15,38.75)(5,50)
\qbezier(35,5)(40,10)(35,15)
\qbezier(35,10)(40,15)(35,20)
\qbezier(35,25)(40,30)(35,35)
\qbezier(60,15)(50,25)(60,35)
\qbezier(75,50)(85,38.75)(75,30)
\qbezier(75,20)(80,31.25)(75,45)
\qbezier(75,5)(80,08.75)(75,15)
\qbezier(100,15)(90,25)(100,35)
\qbezier(100,5)(80,25)(100,45)
\end{picture}
\caption{\label{fig:f1} 
(a) Brauer diagram; $\;$ 
(b) Basic composition of partial Brauer partitions
(the straightening factor here is $\delta {\delta'}^2$).}
\label{fig1}
\end{figure}
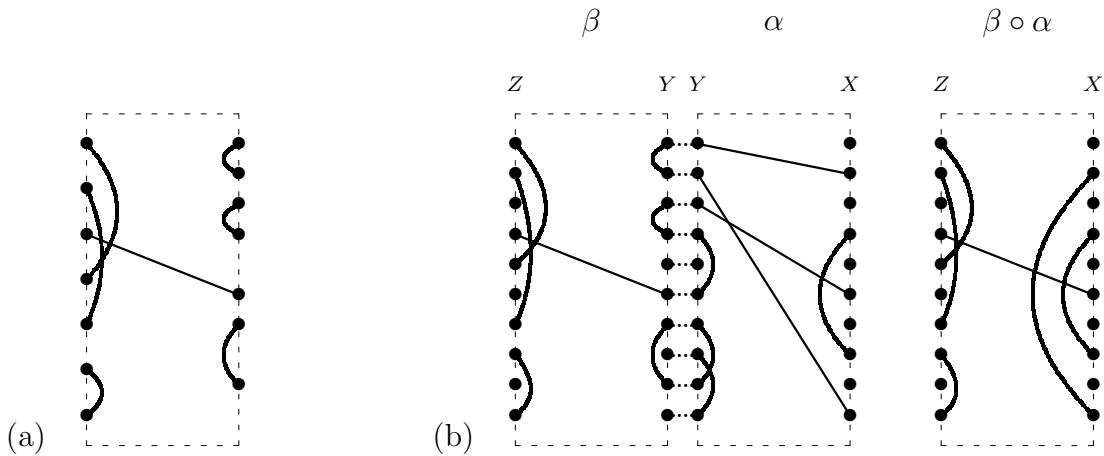

We show that the algebra $\mathfrak{R}_n(\delta,\delta')$ is generically semisimple over $\mathbb{C}$, and  construct 
{\em Specht} modules over $\mathbb{Z}[\delta,\delta']$ that pass to a full set of generic simple modules 
over $\mathbb{C}$.  
In the remaining non-semisimple cases over $\mathbb{C}$ 
the decomposition matrices for these Specht modules, and hence 
the Cartan decomposition matrices, become very complicated, however we determine them via a string of Morita 
equivalences that end up with direct sums of Brauer algebras, whose decomposition matrices are known by 
\cite{CoxDevisscherMartin0509,Martin08-9}. Our key theorem here is the following:

\begin{theorem}\label{thm1}
For $\delta' ,\delta - 1 \in \Bbbk^{\times}$  we have
$\mathfrak{R}_n (\delta,\delta' ) \; \equiv \; \mathfrak{B}_n(\delta-1) \oplus \mathfrak{B}_{n-1}(\delta-1)$. 
\end{theorem}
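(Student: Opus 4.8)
The plan is to produce a single full idempotent $f\in\mathfrak{R}_n(\delta,\delta')$ whose corner algebra is, on the nose, the direct product $\mathfrak{B}_n(\delta-1)\oplus\mathfrak{B}_{n-1}(\delta-1)$; the theorem then follows from $\mathfrak{R}_n(\delta,\delta')\equiv f\mathfrak{R}_n(\delta,\delta')f$. For $1\le i\le n$ let $a_i\in\mathfrak{R}_n(\delta,\delta')$ be the diagram obtained from the identity by replacing the line $\{i,i'\}$ with the two singletons $\{i\},\{i'\}$. Then $a_i^2=\delta'a_i$, the $a_i$ commute, and---the point on which everything turns---if a diagram $\gamma$ has $i'$ (resp.\ $i$) as a singleton then $\gamma a_i=\delta'\gamma$ (resp.\ $a_i\gamma=\delta'\gamma$), since the composition removes exactly one open string; moreover $u\,a_j\,u=\delta'u$ whenever $u$ is the Brauer cup--cap generator on a pair of strands containing $j$, and $s\,a_i\,s^{-1}=a_{s(i)}$ for a permutation diagram $s$. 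As $\delta'\in\Bbbk^{\times}$, each $\delta'^{-1}a_i$ is an idempotent; put $f:=\prod_{i=1}^{n-1}(1-\delta'^{-1}a_i)$, $e:=(1-\delta'^{-1}a_n)f$ and $e':=\delta'^{-1}a_nf$, so that $e,e'$ are orthogonal idempotents with $e+e'=f$.

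First one checks that $f$ is full, i.e.\ $\mathfrak{R}_n(\delta,\delta')\,f\,\mathfrak{R}_n(\delta,\delta')=\mathfrak{R}_n(\delta,\delta')$, whence $\mathfrak{R}_n(\delta,\delta')\equiv f\mathfrak{R}_n(\delta,\delta')f$. Using the Specht module basis, the simple $\mathfrak{R}_n(\delta,\delta')$-modules are indexed by pairs $(k,\lambda)$ with $0\le k\le n$, $\lambda\vdash k$, and the corresponding cell module has a half-diagram whose singletons, if any, all lie on the strand $n$ (none when $n-k$ is even, exactly one when $n-k$ is odd); since $f$ only involves $a_1,\dots,a_{n-1}$ it does not annihilate this half-diagram, so $f$ acts nonzero on every simple module and hence generates all of $\mathfrak{R}_n(\delta,\delta')$ as a two-sided ideal (for a general ground ring one first reduces to a field, $\mathfrak{R}_n(\delta,\delta')$ being free of finite rank over $\Bbbk$). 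Next, because $f$ acting on either side kills every diagram having a singleton among $\{1,\dots,n-1,1',\dots,(n-1)'\}$, and because a partial Brauer diagram has equally many left and right singletons, $f\mathfrak{R}_n(\delta,\delta')f$ is spanned by the elements $f\gamma f$ with $\gamma$ either a Brauer diagram on $\mathbf n$ or a Brauer diagram on $\{1,\dots,n-1,1',\dots,(n-1)'\}$ together with the singletons $\{n\},\{n'\}$; since $f=\mathrm{id}+(\text{a sum of terms each carrying a singleton on some strand}<n)$, these $f\gamma f$ are linearly independent, of total number $(2n-1)!!+(2n-3)!!$.

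Now $e$ kills every diagram having a left singleton, whereas every diagram occurring in $\mathfrak{R}_n(\delta,\delta')\,e'$ has $n'$ as a singleton and hence also a left singleton; therefore $e\mathfrak{R}_n(\delta,\delta')e'=0$, and symmetrically $e'\mathfrak{R}_n(\delta,\delta')e=0$. Together with orthogonality this yields a direct sum of algebras $f\mathfrak{R}_n(\delta,\delta')f=e\mathfrak{R}_n(\delta,\delta')e\oplus e'\mathfrak{R}_n(\delta,\delta')e'$, the first summand with basis $\{e\beta e:\beta\text{ a Brauer diagram on }\mathbf n\}$ and the second with basis $\{e'\widetilde\beta e':\beta\text{ a Brauer diagram on the first }n-1\text{ strands}\}$. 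The second summand reduces to the first: the isomorphism $\delta'^{-1}a_n\,\mathfrak{R}_n(\delta,\delta')\,\delta'^{-1}a_n\cong\mathfrak{R}_{n-1}(\delta,\delta')$ (``delete the last strand'') carries $e'$ to the analogue of $e$ inside $\mathfrak{R}_{n-1}(\delta,\delta')$, so $e'\mathfrak{R}_n(\delta,\delta')e'\cong\mathfrak{B}_{n-1}(\delta-1)$ once the first summand is understood with $n-1$ in place of $n$. For the first summand, $\beta\mapsto e\beta e$ is a linear bijection onto $e\mathfrak{R}_n(\delta,\delta')e$ by the previous paragraph, and it is an algebra homomorphism $\mathfrak{B}_n(\delta-1)\to e\mathfrak{R}_n(\delta,\delta')e$: the $es_ie$ generate a copy of $\Bbbk S_n$ because $s_i$ commutes with $e$, and the value of a closed circle comes out right. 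Indeed, expanding the inner $e$ as $\sum_{S\subseteq\underline n}(-\delta'^{-1})^{|S|}a_S$ with $a_S=\prod_{i\in S}a_i$, the product $e\beta e\gamma e$ becomes a signed sum of the $e(\beta a_S\gamma)e$; only the $S$ for which $\beta a_S\gamma$ straightens to a Brauer diagram contribute, and for such $S$ a circle formed by a cup of $\beta$ meeting a cap of $\gamma$ contributes $\delta$ when $S$ misses it, $(-\delta'^{-1})\delta'=-1$ for each of the two single-$a_i$ cuts, and $(-\delta'^{-1})^2\delta'^2=+1$ for the double cut, a net factor $\delta-2+1=\delta-1$. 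After checking the remaining Brauer relations $(es_ie)(eue)=eue=(eue)(es_ie)$ and so on, this gives $e\mathfrak{R}_n(\delta,\delta')e\cong\mathfrak{B}_n(\delta-1)$, whence
\[
\mathfrak{R}_n(\delta,\delta')\equiv f\mathfrak{R}_n(\delta,\delta')f=e\mathfrak{R}_n(\delta,\delta')e\oplus e'\mathfrak{R}_n(\delta,\delta')e'\cong\mathfrak{B}_n(\delta-1)\oplus\mathfrak{B}_{n-1}(\delta-1).
\]

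The step I expect to be the main obstacle is this circle computation: one must control precisely which subsets $S$ make $\beta a_S\gamma$ straighten to an honest Brauer diagram, and how the inserted $a_i$'s recombine with the Brauer straightening, in order to see the loop parameter emerge as exactly $\delta-1$ for the whole multiplication (not merely to verify a single quadratic relation such as $(eue)^2=(\delta-1)(eue)$). The remaining ingredients---fullness of $f$, the vanishing of the cross terms, and the ``delete a strand'' isomorphism---are comparatively routine diagram manipulations.
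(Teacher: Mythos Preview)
Your approach is essentially the paper's, repackaged. Your idempotent $e$ is precisely the paper's $\hat{u}_n=\prod_{i=1}^n(1-\delta'^{-1}u_{n,i})$, and your $e'$ is (up to an $S_n$-conjugation placing the singleton at position $n$ rather than position $1$, and a harmless $\delta'^{-1}$) the paper's $\omega=\langle u_{\{1\}}\rangle$. The identification $e\mathfrak{R}_n e\cong\mathfrak{B}_n(\delta-1)$ is the paper's Proposition~3, the ``delete a strand'' identification $e'\mathfrak{R}_n e'\cong\mathfrak{B}_{n-1}(\delta-1)$ is Proposition~5, and your vanishing of the cross terms together with fullness of $f=e+e'$ encodes the parity decomposition $\mathfrak{R}_n=\mathfrak{R}_n^0\oplus\mathfrak{R}_n^1$ of Proposition~2 combined with Propositions~4 and~6. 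Two points in your execution are incomplete.

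First, your circle computation only treats a loop with two equator vertices (one cup meeting one cap). A closed loop in $\beta|\gamma$ may pass through $2m$ equator vertices, and cutting it at $s\ge1$ of them produces $s$ open strings; the sum over all subsets of those vertices is $\delta+\sum_{s=1}^{2m}\binom{2m}{s}(-\delta'^{-1})^s\delta'^{s}=\delta+(1-1)^{2m}-1=\delta-1$, so the answer is right in general, but you have not said this. You must also argue that a cut on any non-loop component (a path in $\beta|\gamma$ with an exterior endpoint) creates an exterior singleton and is therefore killed by the outer $e$'s; this is what makes the sum localise to loops. The paper packages all of this once and for all in Proposition~1(ii) via the decorated-diagram calculus, which then makes both Propositions~3 and~5 one-liners.

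Second, your fullness argument has a gap. You produce, for each $(k,\lambda)$, a half-diagram $d$ in the \emph{cell} module $\Delta(\lambda)$ with $fd\neq0$; but fullness needs $f$ nonzero on every \emph{simple} module, and nothing you wrote prevents $f\Delta(\lambda)$ from lying in the radical. (A counting argument via the number of simples of $f\mathfrak{R}_nf$ would repair this, but only after you have already identified the corner algebra, and it still needs $\delta-1\in\Bbbk^\times$ to get the right count on the Brauer side.) The paper's argument is more direct and works uniformly over any commutative ring: it exhibits each $\omega_X$ explicitly as $(\delta-1)^{-k}\langle x\rangle\hat{u}_n\langle x^\star\rangle$ for a suitable diagram $x$ (Propositions~4 and~6), so that the trace ideal visibly contains the central idempotents $1_e$ and $1_o$. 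This is exactly where the hypothesis $\delta-1\in\Bbbk^\times$ enters.
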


In Section~\ref{s2} we prove this theorem (using a direct analogue of the method used for 
the partition algebra variation treated in \cite{Martin2000}).  In Section~\ref{s3} we combine Theorem~\ref{thm1} 
with results on the representation theory of the Brauer algebra from \cite{Martin08-9} to describe the complex 
representation theory of $\mathfrak{R}_n(\delta,\delta')$.  In particular, we give an explicit construction for a 
complete set of Specht modules for $\mathfrak{R}_n(\delta,\delta')$, and show that these are images under the 
Morita equivalence of the corresponding Specht modules for the 
Brauer algebra. This means, in particular, that we 
can use the Brauer decomposition matrices from \cite{Martin08-9}. 
A mild variation of  
Theorem~\ref{thm1} holds for $\delta = 1$ 
(see Section~\ref{s4}), so that we also determine the Cartan decomposition
matrices in this case. 
As Theorem~\ref{thm1} suggests, 
provided that $\delta'$ is a unit, then it can be 
`scaled out' of representation theoretic calculations. 
In Section~\ref{s4} we also deal with
the 
$\delta' =0$ case. 

The classical Brauer algebra was defined in \cite{Brauer37} as an algebra acting on the right hand side of the 
natural generalization of the classical Schur-Weyl duality in the case of the orthogonal group.
In Section~\ref{s5} we show that the partial Brauer algebra appears on the right hand side of a natural
{\em partialization} of this Schur-Weyl duality in the spirit of Solomon's construction for the symmetric group
in \cite{So}. 

A significant feature of Theorem~\ref{thm1} 
is that it relates the partial Brauer algebra to Brauer algebras
with  a {\em different} value of the parameter $\delta$. 
Another interesting feature is
that it relates the partial Brauer algebra of rank $n$ 
to Brauer algebras of ranks $n$ and $n-1$ only
(one could  contrast this behaviour with that of certain
other partial analogues, 
%
see for example \cite{Paget06}, 
for which one gets a Morita equivalence with the direct sum
over all ranks $k=0,1,...,n$ 
of  similar algebras). 

To simplify notation we set $\mathfrak{R}_n:=\mathfrak{R}_n(\delta,\delta')$, $\mathfrak{P}_n:=\mathfrak{P}_n(\delta)$
$\mathfrak{B}_n:=\mathfrak{B}_n(\delta)$ and $\mathcal{P}\mathfrak{B}_n:=\mathcal{P}\mathfrak{B}_n(\delta)$.
\vspace{2mm}

\noindent
{\bf Acknowledgements.} An essential part of the research was done during the visit of the first author to Uppsala 
in November 2010, which was supported by the Faculty of Natural Sciences of Uppsala University.  The financial 
support and hospitality of Uppsala University are gratefully acknowledged. For the second author the research was
partially supported by the Royal Swedish Academy of Sciences and the Swedish Research Council.

\section{Preliminaries}\label{s12}

\subsection{Categorical formulation}\label{s12.1}

As a matter of expository efficiency (rather than necessity) we note the following. The partition and Brauer algebras extend in an obvious way to $\Bbbk$-linear categories \cite{Martin94,Martin08-9}, here denoted $\mathfrak{P}$ and
$\mathfrak{B}$, respectively.  The partial Brauer categories $\mathcal{P}\mathfrak{B}$ and $\mathfrak{R}$ are 
defined similarly. The category $\mathfrak{P}$ is a monoidal category with monoidal composition $a \otimes b$ defined 
as in Figure~\ref{fig3}; and an involutive antiautomorphism $\star$ (in terms of diagrams as drawn here, the $\star$  
operation is reflection in a vertical line~--- see e.g. \cite{Martin08-9}, and Figure~\ref{fig3}(b)).  It is then 
generated, as a $\Bbbk$-linear category with $\otimes $ and $\star$, by 
\[
\raisebox{5mm}{$u =$} 
\begin{picture}(63.00,35)(0,5)
\special{em:linewidth 0.4pt} \unitlength 0.80mm
\put(05.00,10.00){\makebox(0,0)[cc]{$\bullet$}}
\dashline{1}(05.00,05.00)(05.00,15.00)
\dashline{1}(25.00,05.00)(25.00,15.00)
\dashline{1}(05.00,05.00)(25.00,05.00)
\dashline{1}(05.00,15.00)(25.00,15.00)
\thicklines
\end{picture}
\raisebox{.1in}{ ,} \quad 
\raisebox{5mm}{1 = }
\begin{picture}(63.00,35.00)(0,5)
\special{em:linewidth 0.4pt} \unitlength 0.80mm
\put(05.00,10.00){\makebox(0,0)[cc]{$\bullet$}}
\put(25.00,10.00){\makebox(0,0)[cc]{$\bullet$}}
\dashline{1}(05.00,05.00)(05.00,15.00)
\dashline{1}(25.00,05.00)(25.00,15.00)
\dashline{1}(05.00,05.00)(25.00,05.00)
\dashline{1}(05.00,15.00)(25.00,15.00)
\thicklines
\drawline(05.00,10.00)(25.00,10.00)
\end{picture}
\raisebox{.1in}{ ,} \quad 
\raisebox{7mm}{$v = $}
\begin{picture}(65.00,45.00)(0,10)
\special{em:linewidth 0.4pt} \unitlength 0.80mm
\put(05.00,10.00){\makebox(0,0)[cc]{$\bullet$}}
\put(05.00,20.00){\makebox(0,0)[cc]{$\bullet$}}
\put(25.00,20.00){\makebox(0,0)[cc]{$\bullet$}}
\dashline{1}(05.00,05.00)(05.00,25.00)
\dashline{1}(25.00,05.00)(25.00,25.00)
\dashline{1}(05.00,05.00)(25.00,05.00)
\dashline{1}(05.00,25.00)(25.00,25.00)
\thicklines
\drawline(05.00,10.00)(25.00,20.00)
\drawline(05.00,20.00)(25.00,20.00)
\end{picture}
\raisebox{.1in}{ ,} \quad 
\raisebox{7mm}{$x = $}
\begin{picture}(65.00,39.00)(0,5)
\special{em:linewidth 0.4pt} \unitlength 0.80mm
\put(05.00,10.00){\makebox(0,0)[cc]{$\bullet$}}
\put(05.00,20.00){\makebox(0,0)[cc]{$\bullet$}}
\put(25.00,10.00){\makebox(0,0)[cc]{$\bullet$}}
\put(25.00,20.00){\makebox(0,0)[cc]{$\bullet$}}
\dashline{1}(05.00,05.00)(05.00,25.00)
\dashline{1}(25.00,05.00)(25.00,25.00)
\dashline{1}(05.00,05.00)(25.00,05.00)
\dashline{1}(05.00,25.00)(25.00,25.00)
\thicklines
\drawline(05.00,10.00)(25.00,20.00)
\drawline(05.00,20.00)(25.00,10.00)
\end{picture}\raisebox{.1in}{ ,}
\]
that is to say, the minimal $\Bbbk$-linear subcategory closed under $\otimes$ and $\star$ and containing these four 
elements is $\mathfrak{P}$ itself (this follows immediately from \cite[Prop.2]{Martin94}).
Similarly, $\mathfrak{B}$ is generated by 1, $x$ and 
\[
\raisebox{10mm}{$v u = $}
\begin{picture}(115.00,60.00)
\special{em:linewidth 0.4pt} \unitlength 0.80mm
\put(05.00,10.00){\makebox(0,0)[cc]{$\bullet$}}
\put(05.00,20.00){\makebox(0,0)[cc]{$\bullet$}}
\put(25.00,20.00){\makebox(0,0)[cc]{$\bullet$}}
\dashline{1}(05.00,05.00)(05.00,25.00)
\dashline{1}(25.00,05.00)(25.00,25.00)
\dashline{1}(05.00,05.00)(25.00,05.00)
\dashline{1}(05.00,25.00)(25.00,25.00)
\dashline{1}(25.00,05.00)(25.00,25.00)
\dashline{1}(45.00,05.00)(45.00,25.00)
\dashline{1}(25.00,05.00)(45.00,05.00)
\dashline{1}(25.00,25.00)(45.00,25.00)
\thicklines
\drawline(05.00,10.00)(25.00,20.00)
\drawline(05.00,20.00)(25.00,20.00)
\end{picture}
\raisebox{10mm}{=}
\begin{picture}(65.00,60.00)
\special{em:linewidth 0.4pt} \unitlength 0.80mm
\put(05.00,10.00){\makebox(0,0)[cc]{$\bullet$}}
\put(05.00,20.00){\makebox(0,0)[cc]{$\bullet$}}
\dashline{1}(05.00,05.00)(05.00,25.00)
\dashline{1}(25.00,05.00)(25.00,25.00)
\dashline{1}(05.00,05.00)(25.00,05.00)
\dashline{1}(05.00,25.00)(25.00,25.00)
%
\thicklines
\qbezier(5,10)(10,15)(5,20)
\end{picture}
\]
Finally, $\mathcal{P}\mathfrak{B}$ and $\mathfrak{R}$ are generated by $1$, $x$, $vu$ and $u$.
The algebras $\mathfrak{P}_n$, $\mathfrak{B}_n$, $\mathcal{P}\mathfrak{B}_n$ and $\mathfrak{R}_n$
are in the natural way endomorphism algebras of certain objects in the corresponding categories.


\begin{figure}
(a)
\special{em:linewidth 0.4pt} \unitlength 0.80mm
\begin{picture}(75.00,70.00)
\put(15.00,32.50){\makebox(0,0)[cc]{$\otimes$}}
\put(37.50,32.50){\makebox(0,0)[cc]{$=$}}
\put(05.00,10.00){\makebox(0,0)[cc]{$\bullet$}}
\put(05.00,20.00){\makebox(0,0)[cc]{$\bullet$}}
\put(05.00,45.00){\makebox(0,0)[cc]{$\bullet$}}
\put(05.00,50.00){\makebox(0,0)[cc]{$\bullet$}}
\put(05.00,55.00){\makebox(0,0)[cc]{$\bullet$}}
\put(05.00,60.00){\makebox(0,0)[cc]{$\bullet$}}
\put(25.00,10.00){\makebox(0,0)[cc]{$\bullet$}}
\put(25.00,15.00){\makebox(0,0)[cc]{$\bullet$}}
\put(25.00,20.00){\makebox(0,0)[cc]{$\bullet$}}
\put(25.00,45.00){\makebox(0,0)[cc]{$\bullet$}}
\put(25.00,52.50){\makebox(0,0)[cc]{$\bullet$}}
\put(25.00,60.00){\makebox(0,0)[cc]{$\bullet$}}
\put(50.00,15.00){\makebox(0,0)[cc]{$\bullet$}}
\put(50.00,25.00){\makebox(0,0)[cc]{$\bullet$}}
\put(50.00,35.00){\makebox(0,0)[cc]{$\bullet$}}
\put(50.00,40.00){\makebox(0,0)[cc]{$\bullet$}}
\put(50.00,45.00){\makebox(0,0)[cc]{$\bullet$}}
\put(50.00,50.00){\makebox(0,0)[cc]{$\bullet$}}
\put(70.00,15.00){\makebox(0,0)[cc]{$\bullet$}}
\put(70.00,20.00){\makebox(0,0)[cc]{$\bullet$}}
\put(70.00,25.00){\makebox(0,0)[cc]{$\bullet$}}
\put(70.00,35.00){\makebox(0,0)[cc]{$\bullet$}}
\put(70.00,42.50){\makebox(0,0)[cc]{$\bullet$}}
\put(70.00,50.00){\makebox(0,0)[cc]{$\bullet$}}
\dashline{1}(05.00,05.00)(05.00,25.00)
\dashline{1}(05.00,40.00)(05.00,65.00)
\dashline{1}(25.00,05.00)(25.00,25.00)
\dashline{1}(25.00,40.00)(25.00,65.00)
\dashline{1}(50.00,10.00)(50.00,55.00)
\dashline{1}(70.00,10.00)(70.00,55.00)
\dashline{1}(05.00,05.00)(25.00,05.00)
\dashline{1}(05.00,65.00)(25.00,65.00)
\dashline{1}(05.00,40.00)(25.00,40.00)
\dashline{1}(05.00,25.00)(25.00,25.00)
\dashline{1}(50.00,10.00)(70.00,10.00)
\dashline{1}(50.00,30.00)(70.00,30.00)
\dashline{1}(50.00,55.00)(70.00,55.00)
\thicklines
\drawline(05.00,55.00)(25.00,45.00)
\drawline(05.00,20.00)(25.00,15.00)
\drawline(50.00,45.00)(70.00,35.00)
\drawline(50.00,25.00)(70.00,20.00)
\qbezier(5,10)(10,15)(5,20)
\qbezier(5,50)(10,55)(5,60)
\qbezier(25,10)(20,15)(25,20)
\qbezier(25,52.50)(20,56.25)(25,60)
\qbezier(50,15)(55,20)(50,25)
\qbezier(50,40)(55,45)(50,50)
\qbezier(70,15)(65,20)(70,25)
\qbezier(70,42.50)(65,46.25)(70,50)
\end{picture}
\hspace{.541in} (b)
\begin{picture}(30.00,35)(0,.5)
\special{em:linewidth 0.4pt} \unitlength 0.80mm
\put(05.00,10.00){\makebox(0,0)[cc]{$\bullet$}}
\dashline{1}(05.00,05.00)(05.00,15.00)
\dashline{1}(25.00,05.00)(25.00,15.00)
\dashline{1}(05.00,05.00)(25.00,05.00)
\dashline{1}(05.00,15.00)(25.00,15.00)
\thicklines
\end{picture}
\raisebox{.21in}{
$\stackrel{\star}{\mapsto}$}
\begin{picture}(30.00,35)(0,.5)
\special{em:linewidth 0.4pt} \unitlength 0.80mm
\put(25.00,10.00){\makebox(0,0)[cc]{$\bullet$}}
\dashline{1}(05.00,05.00)(05.00,15.00)
\dashline{1}(25.00,05.00)(25.00,15.00)
\dashline{1}(05.00,05.00)(25.00,05.00)
\dashline{1}(05.00,15.00)(25.00,15.00)
\thicklines
\end{picture}
\caption{(a) Tensor product of partition diagrams; (b) $\star$ operation.}
\label{fig3}
\end{figure}
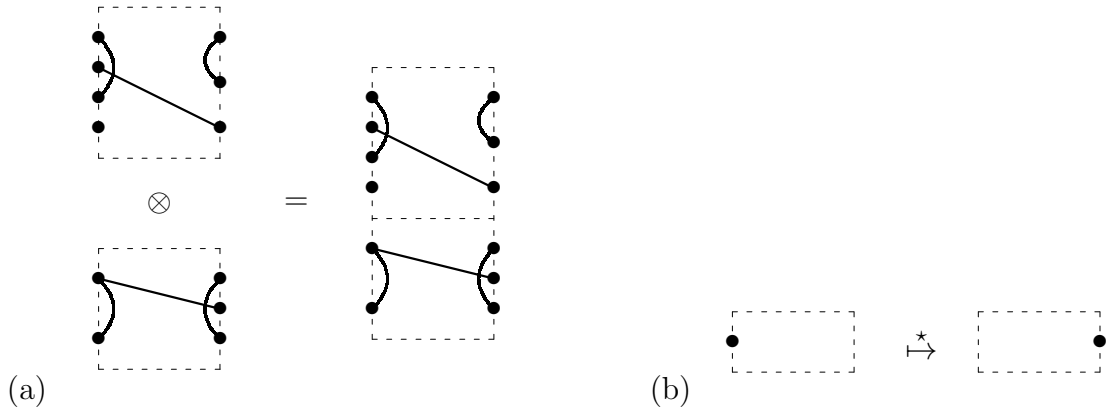

\subsection{Diagram calculus}\label{s12.2}

We denote by $\mathtt{P}_{n}$ the set of all set partitions of $\mathbf{n}$; by $\mathtt{B}_{n}$ the set of all 
Brauer partitions of $\mathbf{n}$ and by $\mathtt{R}_{n}$ the set of all partial Brauer partitions of $\mathbf{n}$.
We identify partitions with diagrams as explained in the introduction. 

For  $d,d'\in \mathtt{P}_{n}$ we denote by $d\circ d'$ the element of $\mathtt{P}_{n}$ obtained by 
juxtaposing $d$ (on the left) and $d'$ (on the right) and then applying the straitening rule with $\delta=1$.

In a partition diagram, a part with vertices on both sides of the diagram is called a {\em propagating part} 
or a propagating line. The number $\#^{p}(d)$ of propagating lines is called the {\em propagating number}
(in the literature this is sometimes also known as {\em rank}). All diagrams of the maximal rank $n$ form
a copy of the symmetric group $S_n$ in $\mathfrak{R}_n$. The number $\#^{s}(d)$ of singleton parts in
$d$ is called the {\em defect} of $d$. When two diagrams $d,d'$ are concatenated in 
composition, we call the `middle' layer formed (before straightening) the {\em equator}. We write $d|d'$ for 
the concatenated unstraightened `diagram'. 

The following elementary exercise will illustrate the diagram calculus machinery
(of juxtaposition and straightening) in the partial Brauer case, and also be useful later on. Define 
$\mathtt{R}_{n}^{(l)}$ as the set of partial Brauer partitions with $l$ singletons. For $d \in \mathtt{R}_{n}^{(l)}$ 
and  $d' \in \mathtt{R}_{n}^{(l')}$, the singletons from $d$ and $d'$ appear in  $d|d'$ in three possible ways: 
\begin{enumerate}[(I)]
\item\label{en1.1} in the exterior (becoming singletons of $dd'$);
\item\label{en1.2} as endpoints of open strings in the equator, i.e. in pairs connected by a (possibly zero length)
chain of pair parts;
\item\label{en1.3} as endpoints of chains terminating in the exterior.
\end{enumerate}
Let $2m$ be the number of singletons in $d|d'$ of type \eqref{en1.2}. Then it is easy to show that for 
some $k\in\mathbb{N}_0$ and $\hat{d}\in\mathtt{R}_{n}^{(l+l'-2m)}$ we have $dd'=\delta^{k}{\delta'}^{m}\hat{d}$.

One should keep in mind that every partial Brauer  diagram $d$ encodes a partition. Thus the assertion $\{i,j\} \in d$
for $i,j\in\mathbf{n}$ means that there is a line between vertices $i$ and $j$ in $d$.

Diagram calculus can also be extended to other elements of our
algebras. 
Assume $\delta'\in\Bbbk^{\times}$
and consider the element $\hat{u}\in\mathfrak{R}_1$ defined as
follows: 
$$
\hat{u} \; := \; 1-\frac{1}{\delta'}u\otimes u^{\star} 
\; = \; \{\{ 1,1' \}\} -\frac{1}{\delta'} \{\{ 1 \},\{ 1' \}\}.
$$
We denote this combination by $\{ \{ 1,1'  \}_{\blacksquare} \}$,
by direct analogy with \cite[\S3.1]{MartinGreenParker07}.
Similarly 
we will depict the element $\hat{u}$ as follows, by decorating the propagating line of the diagram of $1$ with a box:
\begin{displaymath}
\raisebox{7mm}{$\hat{u}:=$}
\begin{picture}(30.00,35)(0,.5)
\special{em:linewidth 0.4pt} \unitlength 0.80mm
\put(05.00,10.00){\makebox(0,0)[cc]{$\bullet$}}
\put(25.00,10.00){\makebox(0,0)[cc]{$\bullet$}}
\put(15.00,10.00){\makebox(0,0)[cc]{$\blacksquare$}}
\drawline(05.00,10.00)(25.00,10.00)
\dashline{1}(05.00,05.00)(05.00,15.00)
\dashline{1}(25.00,05.00)(25.00,15.00)
\dashline{1}(05.00,05.00)(25.00,05.00)
\dashline{1}(05.00,15.00)(25.00,15.00)
\thicklines
\end{picture} \quad\quad\quad\quad
\raisebox{7mm}{$=$}
\quad
\begin{picture}(30.00,35)(0,.5)
\special{em:linewidth 0.4pt} \unitlength 0.80mm
\put(05.00,10.00){\makebox(0,0)[cc]{$\bullet$}}
\put(25.00,10.00){\makebox(0,0)[cc]{$\bullet$}}
\drawline(05.00,10.00)(25.00,10.00)
\dashline{1}(05.00,05.00)(05.00,15.00)
\dashline{1}(25.00,05.00)(25.00,15.00)
\dashline{1}(05.00,05.00)(25.00,05.00)
\dashline{1}(05.00,15.00)(25.00,15.00)
\thicklines
\end{picture} 
\quad\quad\quad\quad
\raisebox{7mm}{$-\,\,\,\frac{1}{\delta'}$}
\quad
\begin{picture}(30.00,35)(0,.5)
\special{em:linewidth 0.4pt} \unitlength 0.80mm
\put(05.00,10.00){\makebox(0,0)[cc]{$\bullet$}}
\put(25.00,10.00){\makebox(0,0)[cc]{$\bullet$}}
\dashline{1}(05.00,05.00)(05.00,15.00)
\dashline{1}(25.00,05.00)(25.00,15.00)
\dashline{1}(05.00,05.00)(25.00,05.00)
\dashline{1}(05.00,15.00)(25.00,15.00)
\thicklines
\end{picture} 
\end{displaymath}
It is straightforward to check that $\hat{u}$ is an idempotent.  

We set $1_n:=1\otimes 1\otimes\cdots\otimes 1$ ($n$ factors). 
This is the identity element of $\mathfrak{R}_n$.
For $n\in\mathbb{N}$ and $i\in\underline{n}$ set 
\begin{displaymath}
\hat{u}_{n,i}:=\underbrace{1\otimes 1\otimes\cdots \otimes 1}_{i-1 \text{ factor}}\otimes \hat{u} \otimes 
\underbrace{1\otimes 1\otimes\cdots \otimes 1}_{n-i \text{ factors}}\in \mathfrak{R}_{n}.
\end{displaymath}
Each $\hat{u}_{n,i}$ is an idempotent. Define 
$\hat{u}_{n}:=\hat{u}_{n,1}\hat{u}_{n,2}\cdots\hat{u}_{n,n}$. Note that the factors of this product commute. 
It follows that $\hat{u}_{n}$ is an idempotent, moreover,
\begin{equation}\label{eq2}
\hat{u}_{n}\hat{u}_{n,i}=\hat{u}_{n,i}\hat{u}_{n}=\hat{u}_{n}. 
\end{equation}
Similarly we define the elements
\begin{displaymath}
{u}_{n,i}:=\underbrace{1\otimes 1\otimes\cdots \otimes 1}_{i-1 \text{ factor}}\otimes {u}\otimes u^{\star} \otimes 
\underbrace{1\otimes 1\otimes\cdots \otimes 1}_{n-i \text{ factors}}\in \mathfrak{R}_{n}
\end{displaymath}
and ${u}_{n}:={u}_{n,1}{u}_{n,2}\cdots{u}_{n,n}$. The elements ${u}_{n,i}$ and ${u}_{n}$ are idempotent
provided that $\delta'=1$. 

Let $d\in\mathtt{R}_n$ and assume that $\{i,j\}\in d$ for some
$i,j\in\mathbf{n}$. It is straightforward to check that:
\begin{equation}\label{eq1}
\begin{array}{ccc}
\hat{u}_{n,i}d= \hat{u}_{n,j}d, & \text{if} & i,j\in\underline{n}; \\
d\hat{u}_{n,i}= d\hat{u}_{n,j}, & \text{if} & i,j\in\underline{n}';\\
\hat{u}_{n,i}d= d\hat{u}_{n,j}, & \text{if} & i\in\underline{n},j\in\underline{n}'; \\
d\hat{u}_{n,i}= \hat{u}_{n,j}d, & \text{if} & j\in\underline{n},i\in\underline{n}'
\end{array}
\end{equation}
(for $i \in \underline{n}$ set $\hat{u}_{n,i'} = \hat{u}_{n,i}$).
In each case, the corresponding element $\hat{u}_{n,i}d\in \mathfrak{R}_n$, for $i\in \underline{n}$, 
or $d\hat{u}_{n,j}\in \mathfrak{R}_n$, for $j\in \underline{n}'$,  will be depicted by the same diagram as $d$ 
but with the part $\{i,j\}$ decorated by a box just like in the case of the element $\hat{u}$ defined above. 

Define the set $\hat{\mathtt{R}}_n$ of 
{\em decorated (partition) diagrams} as follows: a decorated diagram is a diagram from ${\mathtt{R}}_n$ in which, 
additionally, some lines are decorated by a box (the position of the box on the line does not matter). 
We identify $\hat{\mathtt{R}}_n$ with certain elements from $\mathfrak{R}_n$ recursively as follows: first of 
all ${\mathtt{R}}_n\subset \hat{\mathtt{R}}_n$ with the natural identification. Then, given $d\in \hat{\mathtt{R}}_n$
with an already fixed identification, and an undecorated part $\{i,j\}\in d$, the decoration of this part gives
a new decorated diagram which we identify with the element $\hat{u}_{n,i}d$ (in the case $i\in \underline{n}$) 
or $d\hat{u}_{n,i}$ (in the case $i\in \underline{n}'$).  From \eqref{eq1} it follows that this does not depend on 
the choice of a representative in $\{i,j\}$. We leave it to the reader to check that the decorated diagram (as an 
element of $\mathfrak{R}_n$) does not depend on the order in which its lines are decorated. 

For $d\in {\mathtt{R}}_n$ and any set $X$ of pair parts let $d_X$ denote the decorated diagram obtained from $d$ 
by decorating all parts from $X$, and $d^X$ denote the undecorated diagram obtained from $d$ by splitting all parts 
in $X$ into singletons. Then one checks that the above identification can be reformulated as follows:
\begin{displaymath}
d_X=\sum_{Y\subset X} \frac{1}{{\delta'}^{|Y|}}d^Y.
\end{displaymath}
For $d\in {\mathtt{R}}_n$ we denote by $\langle d\rangle$ the element from $\hat{\mathtt{R}}_n$ obtained from
$d$ by decorating all pair parts. We have $\langle 1_n\rangle=\hat{u}_n$. For $X\subset\underline{n}$ define
\begin{displaymath}
\hat{u}_X:=\prod_{i\in X}\hat{u}_{n,i}  
\end{displaymath}
and note that the factors of this product commute. For $d\in\mathtt{R}_n$ let $l(d)\subset\underline{n}$ consist
of all $i$ such that $\{i\}$ is not a singleton in $d$. Similarly, let $r(d)\subset\underline{n}$ consist
of all $i$ such that $\{i'\}$ is not a singleton in $d$. Then from the above we have
$\langle d\rangle=\hat{u}_{l(d)}d\hat{u}_{r(d)}$.

\begin{proposition}[Calculus of decorated diagrams]\label{prop1}{\tiny \hspace{1mm}}

\begin{enumerate}[$($i$)$]
\item\label{prop1.1} The elements $\langle d\rangle$, $d\in {\mathtt{R}}_n$, form a basis of $\mathfrak{R}_n$.
\item\label{prop1.2} For $d,d'\in {\mathtt{R}}_n$ we have 
\begin{displaymath}
\langle d\rangle \langle d'\rangle=
\begin{cases}
0, & \text{if a singleton meets a pair part at the equator of $d|d'$};\\
(\delta-1)^k{\delta'}^{k'}\langle d\circ d'\rangle, & \text{otherwise};
\end{cases}
\end{displaymath}
where $k$ and $k'$ are the numbers of closed loops and open strings, respectively, appearing in the
straightening procedure for $d\vert d'$.
\end{enumerate}
\end{proposition}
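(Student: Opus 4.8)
The plan is to prove both parts by carefully tracking how the defining idempotents $\hat{u}_{n,i}$ interact with a concatenation $d|d'$, using the already-established relations \eqref{eq2} and \eqref{eq1} together with the identity $\langle d\rangle = \hat{u}_{l(d)}\, d\, \hat{u}_{r(d)}$. For part \eqref{prop1.1}, I would argue by a triangularity/change-of-basis argument: the expansion $d_X = \sum_{Y\subseteq X}\tfrac{1}{{\delta'}^{|Y|}} d^Y$ exhibited in the excerpt shows that the transition matrix between $\{\langle d\rangle : d\in\mathtt{R}_n\}$ and the standard diagram basis $\{d : d\in\mathtt{R}_n\}$ of $\mathfrak{R}_n$ is unitriangular with respect to the partial order on $\mathtt{R}_n$ given by "more singletons" (since $\langle d\rangle = d_{P(d)}$ where $P(d)$ is the set of pair parts of $d$, and every $d^Y$ occurring has at least as many singletons as $d$, with $d$ itself occurring with coefficient $1$). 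As $\delta'\in\Bbbk^\times$, this matrix is invertible over $\Bbbk$, so the $\langle d\rangle$ form a basis. This is the easy part.

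For part \eqref{prop1.2}, the strategy is to reduce the product $\langle d\rangle\langle d'\rangle$ to a product of plain diagrams sandwiched by $\hat{u}$-idempotents, and then to read off loops and strings from the unstraightened picture $d|d'$. Write $\langle d\rangle\langle d'\rangle = \hat{u}_{l(d)}\, d\, \hat{u}_{r(d)}\hat{u}_{l(d')}\, d'\, \hat{u}_{r(d')}$. The middle factor $\hat{u}_{r(d)}\hat{u}_{l(d')} = \hat{u}_{r(d)\cup l(d')}$ is an idempotent that decorates (on the equator) precisely those lines of $d$ and $d'$ that are not singletons. One then traces, along each connected component of $d|d'$, how these boxes can be slid and merged using the rules \eqref{eq1}: along a pair line a box is transported freely; when two boxes collide on the same line, since $\hat{u}$ is idempotent, they coalesce into one. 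The key case analysis is over the connected components of $d|d'$ that meet the equator:

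\begin{itemize}
\item If a component of $d|d'$ contains an equator vertex that is a singleton of $d$ (or $d'$) adjacent to a pair part, the corresponding $\hat{u}$-idempotent annihilates it — this is exactly the identity $\hat{u}_{n,i}\cdot(\text{singleton at }i)=0$, which follows from $\hat{u} = 1 - \tfrac1{\delta'}u\otimes u^\star$ and $u^\star u$ being the rank-zero diagram scaled appropriately; this yields the $0$ in the statement.
\item A closed loop in $d|d'$ which carries a box after the sliding reduces, by $\hat{u}$-idempotency and the straightening rule for plain diagrams (a plain loop gives $\delta$), to the scalar $\delta - 1$ rather than $\delta$: concretely, a decorated loop equals (plain loop) $-\tfrac{1}{\delta'}\times$(string) in the local calculus, and the string here closes up to contribute a compensating $-1$; more cleanly, a boxed loop is $\hat u_{n,i}$ times a loop, and one computes its value to be $\delta-1$. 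This accounts for the factor $(\delta-1)^k$.
\item An open string in $d|d'$ (a chain of pair parts in the equator with both ends propagating out), once decorated, contributes ${\delta'}^1$ just as in the plain partial Brauer straightening recalled in the Introduction; hence the factor ${\delta'}^{k'}$.
\end{itemize}

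After removing all equator components one is left with $\hat{u}_{l(d\circ d')}\,(d\circ d')\,\hat{u}_{r(d\circ d')} = \langle d\circ d'\rangle$, since the exterior vertices that remain non-singleton in $d\circ d'$ are exactly those that were decorated and survived. I expect the main obstacle to be the bookkeeping in the middle bullet: showing rigorously that a decorated closed loop evaluates to $\delta-1$ (and not $\delta$), and that this is independent of how many boxes sat on the loop before coalescing and of the order of sliding. This is where one must combine $\hat{u}^2=\hat{u}$, the relations \eqref{eq1}, and the $\delta'=1$-straightening convention $d\circ d'$ carefully; once that single local computation is pinned down, the global count of $k$ loops and $k'$ strings follows by induction on the number of equator components.
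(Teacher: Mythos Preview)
Your overall strategy matches the paper's almost exactly: the unitriangular change-of-basis for \eqref{prop1.1}, and for \eqref{prop1.2} the expansion $\langle d\rangle\langle d'\rangle = \hat{u}_{l(d)}\,d\,\hat{u}_{r(d)}\hat{u}_{l(d')}\,d'\,\hat{u}_{r(d')}$ followed by a local analysis of equator components. Your treatment of the singleton-meets-pair case (via $d\,\hat{u}_{n,i}=0$ when $\{i'\}$ is a singleton of $d$) and of closed loops (a boxed loop evaluates to $\delta-1$) is correct and is precisely what the paper does.

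There is, however, a genuine confusion in your third bullet. An \emph{open string} in the straightening of $d|d'$ is by definition a \emph{removed} connected component with two singleton endpoints; it is \emph{not} ``a chain of pair parts in the equator with both ends propagating out'' (such a chain survives as a propagating line of $d\circ d'$ and contributes no scalar). Moreover, in the ``otherwise'' case of the statement you are analysing, the hypothesis is that at every equator vertex singletons meet only singletons and pair parts meet only pair parts. Consequently every open string here is simply a pair of singletons meeting directly at a single equator vertex; it involves no pair parts at all, carries \emph{no} decoration (singletons are never decorated), and contributes $\delta'$ exactly as in the undeformed straightening rule. So the factor ${\delta'}^{k'}$ drops out trivially---there is no decorated calculus to do for strings. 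Your sentence ``once decorated, contributes $\delta'$'' is therefore wrong as written, though the conclusion you reach is correct. The final identification of the result with $\langle d\circ d'\rangle$ then follows because, under the same hypothesis, $l(d\circ d')=l(d)$ and $r(d\circ d')=r(d')$, so the outer idempotents $\hat{u}_{l(d)}$ and $\hat{u}_{r(d')}$ are exactly the ones needed.
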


\begin{proof}
Choose some linear order $<$ on ${\mathtt{R}}_n$ such that for any $d,d'\in {\mathtt{R}}_n$ satisfying
$\#^{s}(d)<\#^{s}(d')$ we have $d<d'$. Then, with respect to this order, the transformation matrix from the basis
$\{d:d\in {\mathtt{R}}_n\}$ to the set $\{\langle d\rangle:d\in {\mathtt{R}}_n\}$ is a square upper triangular matrix
with $1$'s on the diagonal. This implies claim \eqref{prop1.1}.
 
To prove claim  \eqref{prop1.2} we first observe that $\langle d\rangle \langle d'\rangle=
\hat{u}_{l(d)}d\hat{u}_{r(d)}\hat{u}_{l(d')}d'\hat{u}_{r(d')}$. Assume that a singleton part meets a pair part at 
the equator of $d|d'$, say in position $i$. We assume that the singleton part is from $d$ (the other case is 
obtained by applying $\star$). Then, using \eqref{eq2}, we have 
\begin{multline*}
\hat{u}_{l(d)}d\hat{u}_{r(d)}\hat{u}_{l(d')}d'\hat{u}_{r(d')}=
\hat{u}_{l(d)}d\hat{u}_{n,i}\hat{u}_{r(d)}\hat{u}_{l(d')}d'\hat{u}_{r(d')}=\\=
\hat{u}_{l(d)}d\hat{u}_{r(d)}\hat{u}_{l(d')}d'\hat{u}_{r(d')}
-\frac{1}{\delta'}\hat{u}_{l(d)}d{u}_{n,i}\hat{u}_{r(d)}\hat{u}_{l(d')}d'\hat{u}_{r(d')}.
\end{multline*}
Every diagram appearing (before straightening) in the second summand of the last formula has,
compared to the first summand, an extra singleton pair (forming one open string) at level $i$. 
This implies that 
\begin{multline*}
\hat{u}_{l(d)}d\hat{u}_{r(d)}\hat{u}_{l(d')}d'\hat{u}_{r(d')}
-\frac{1}{\delta'}\hat{u}_{l(d)}d{u}_{n,i}\hat{u}_{r(d)}\hat{u}_{l(d')}d'\hat{u}_{r(d')}=\\=
\hat{u}_{l(d)}d\hat{u}_{r(d)}\hat{u}_{l(d')}d'\hat{u}_{r(d')}
-\frac{\delta'}{\delta'}\hat{u}_{l(d)}d\hat{u}_{r(d)}\hat{u}_{l(d')}d'\hat{u}_{r(d')}=0,
\end{multline*}
which established the first line of claim \eqref{prop1.2}. Pictorially, this can be drawn as follows
(disregarding all parts of all diagrams, which are not directly involved in the computation):
\vspace{2mm}

\begin{displaymath}
\begin{picture}(30,20)
\special{em:linewidth 0.4pt} \unitlength 0.80mm
\put(14.00,05.00){\makebox(0,0)[cc]{$\bullet$}}
\put(14.00,15.00){\makebox(0,0)[cc]{$\bullet$}}
\put(09.00,10.00){\makebox(0,0)[cc]{$\blacksquare$}}
\put(16.00,15.00){\makebox(0,0)[cc]{$\bullet$}}
\dashline{1}(01.00,01.00)(01.00,19.00)
\dashline{1}(01.00,19.00)(14.00,19.00)
\dashline{1}(14.00,19.00)(14.00,01.00)
\dashline{1}(01.00,01.00)(14.00,01.00)
\dashline{1}(16.00,01.00)(16.00,19.00)
\dashline{1}(16.00,19.00)(29.00,19.00)
\dashline{1}(29.00,19.00)(29.00,01.00)
\dashline{1}(16.00,01.00)(29.00,01.00)
\thicklines
\qbezier(14,05)(3,10)(14,15)
\end{picture} 
\quad\quad\quad\quad
\raisebox{7mm}{$=$}
\quad
\begin{picture}(30,20)
\special{em:linewidth 0.4pt} \unitlength 0.80mm
\put(14.00,05.00){\makebox(0,0)[cc]{$\bullet$}}
\put(14.00,15.00){\makebox(0,0)[cc]{$\bullet$}}
\put(16.00,15.00){\makebox(0,0)[cc]{$\bullet$}}
\dashline{1}(01.00,01.00)(01.00,19.00)
\dashline{1}(01.00,19.00)(14.00,19.00)
\dashline{1}(14.00,19.00)(14.00,01.00)
\dashline{1}(01.00,01.00)(14.00,01.00)
\dashline{1}(16.00,01.00)(16.00,19.00)
\dashline{1}(16.00,19.00)(29.00,19.00)
\dashline{1}(29.00,19.00)(29.00,01.00)
\dashline{1}(16.00,01.00)(29.00,01.00)
\thicklines
\qbezier(14,05)(3,10)(14,15)
\end{picture} 
\quad\quad\quad\quad
\raisebox{7mm}{$-\frac{1}{\delta'}$} 
\begin{picture}(30,20)
\special{em:linewidth 0.4pt} \unitlength 0.80mm
\put(14.00,05.00){\makebox(0,0)[cc]{$\bullet$}}
\put(14.00,15.00){\makebox(0,0)[cc]{$\bullet$}}
\put(16.00,15.00){\makebox(0,0)[cc]{$\bullet$}}
\dashline{1}(01.00,01.00)(01.00,19.00)
\dashline{1}(01.00,19.00)(14.00,19.00)
\dashline{1}(14.00,19.00)(14.00,01.00)
\dashline{1}(01.00,01.00)(14.00,01.00)
\dashline{1}(16.00,01.00)(16.00,19.00)
\dashline{1}(16.00,19.00)(29.00,19.00)
\dashline{1}(29.00,19.00)(29.00,01.00)
\dashline{1}(16.00,01.00)(29.00,01.00)
\thicklines
\end{picture} 
\quad\quad\quad\quad
\raisebox{7mm}{$=\left(1-\frac{\delta'}{\delta'}\right)$}
\begin{picture}(15,20)
\special{em:linewidth 0.4pt} \unitlength 0.80mm
\dashline{1}(01.00,01.00)(01.00,19.00)
\dashline{1}(01.00,19.00)(14.00,19.00)
\dashline{1}(14.00,19.00)(14.00,01.00)
\dashline{1}(01.00,01.00)(14.00,01.00)
\thicklines
\end{picture} 
\quad\quad
\raisebox{7mm}{$=0$.}
\end{displaymath}

To prove second line of claim \eqref{prop1.2} we observe that the definition of $\hat{u}$ implies that the removal of
a decorated loop splits into a linear combination, in which in the first summand we remove a usual loop and
in the second summand (which has coefficient $\frac{1}{\delta'}$) we remove an open string (when all parts not in
the loop are disregarded). Therefore the total coefficient after removal of the oriented loop is 
$\delta-\frac{1}{\delta'}\delta'=\delta-1$. This can be depicted as follows:

\begin{displaymath}
\begin{picture}(55.00,50)(15,15)
\special{em:linewidth 0.4pt} \unitlength 0.80mm
\put(10.00,15.00){\makebox(0,0)[cc]{$\blacksquare$}}
\dashline{1}(05.00,05.00)(05.00,25.00)
\dashline{1}(25.00,05.00)(25.00,25.00)
\dashline{1}(05.00,05.00)(25.00,05.00)
\dashline{1}(05.00,25.00)(25.00,25.00)
\thicklines
\qbezier(15,10)(30,15)(15,20)
\qbezier(15,10)(3,15)(15,20)
\end{picture}
\raisebox{.21in}{$=$ }
\begin{picture}(59.00,50)(5,15)
\special{em:linewidth 0.4pt} \unitlength 0.80mm
\dashline{1}(05.00,05.00)(05.00,25.00)
\dashline{1}(25.00,05.00)(25.00,25.00)
\dashline{1}(05.00,05.00)(25.00,05.00)
\dashline{1}(05.00,25.00)(25.00,25.00)
\thicklines
\qbezier(15,10)(30,15)(15,20)
\qbezier(15,10)(3,15)(15,20)
\end{picture}
\raisebox{.21in}{ $-\frac{1}{\delta'}$ }
\begin{picture}(55.00,50)(10,15)
\special{em:linewidth 0.4pt} \unitlength 0.80mm
\dashline{1}(05.00,05.00)(05.00,25.00)
\dashline{1}(25.00,05.00)(25.00,25.00)
\dashline{1}(05.00,05.00)(25.00,05.00)
\dashline{1}(05.00,25.00)(25.00,25.00)
\thicklines
\qbezier(15,10)(30,15)(15,20)
\qbezier(15,10)(12,10)(11,13)
\qbezier(15,20)(11,19)(11,17)
\end{picture}
\raisebox{.21in}{ $= (\delta - \frac{1}{\delta'}\delta')$ }
\begin{picture}(55.00,50)(10,15)
\special{em:linewidth 0.4pt} \unitlength 0.80mm
\dashline{1}(05.00,05.00)(05.00,25.00)
\dashline{1}(25.00,05.00)(25.00,25.00)
\dashline{1}(05.00,05.00)(25.00,05.00)
\dashline{1}(05.00,25.00)(25.00,25.00)
\thicklines
\end{picture}
\end{displaymath}
This completes the proof.
\end{proof}

\section{Morita equivalence theorem}\label{s2}

In this section we prove Theorem~\ref{thm1}. We split the proof in a number of steps. Until the end of the
section we assume $\delta',\delta-1\in\Bbbk^{\times}$.

\subsection{Parity decomposition of $\mathfrak{R}_n$}\label{s2.1}

Denote by $\mathtt{R}_n^0$ the set of all $d\in \mathtt{R}_n$ such that $n-\#^p(d)$ is even  (such diagrams will be 
called {\em even}) and set $\mathtt{R}_n^1:=\mathtt{R}_n\setminus\mathtt{R}_n^0$ (diagrams from $\mathtt{R}_n^1$ will 
be called {\em odd}). For $i\in\{0,1\}$ denote by $\mathfrak{R}_n^i$ the linear span of $\langle d\rangle$, 
$d\in \mathtt{R}_n^i$. For $X\subset \underline{n}$ define 
\begin{displaymath}
u_X:=\prod_{i\in X}u_{n,i}
\end{displaymath}
(note that the factors in the product commute) and set $\omega_X:=\langle u_X\rangle$. In particular, we 
have $\omega_{\varnothing}=1_n$ and $\omega_{\varnothing}=\hat{u}_n$. Let $\mathbf{e}(\underline{n})$ and 
$\mathbf{o}(\underline{n})$  denote the sets of all subsets $X$ of $\underline{n}$ such that $|X|$ is even or 
odd, respectively.

\begin{proposition}\label{prop2}
Assume $\delta'\in\Bbbk^{\times}$, then we have the following:
\begin{enumerate}[$($i$)$]
\item\label{prop2.1} We have a decomposition  $\mathfrak{R}_n=\mathfrak{R}_n^0\oplus \mathfrak{R}_n^1$ 
into a direct sum of two subalgebras.
\item\label{prop2.2} The elements 
\begin{displaymath}
1_e:=\displaystyle\sum_{X\in \mathbf{e}(\underline{n})}\frac{1}{{\delta'}^{|X|}}\omega_X\quad\text{ and }\quad
1_o:=\displaystyle\sum_{X\in \mathbf{o}(\underline{n})}\frac{1}{{\delta'}^{|X|}}\omega_X 
\end{displaymath}
are the identities of the algebras $\mathfrak{R}_n^0$ and $\mathfrak{R}_n^1$, respectively.
\end{enumerate}
\end{proposition}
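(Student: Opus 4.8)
The plan is to prove both parts simultaneously by realising $1_e$ and $1_o$ as complementary central idempotents of $\mathfrak{R}_n$ and then reading off how they act on the standard basis $\{\langle d\rangle:d\in\mathtt{R}_n\}$ of Proposition~\ref{prop1}\eqref{prop1.1}. First I would record the diagram form of the elements involved: $u_{n,i}$ is the plain partial Brauer diagram with singletons $\{i\},\{i'\}$ and propagating lines $\{j,j'\}$ for $j\neq i$, these commute, and the product $u_X=\prod_{i\in X}u_{n,i}$ is, as an element of $\mathfrak{R}_n$, just the plain diagram with a singleton at each position of $X$ and a propagating line elsewhere (in particular $u_\varnothing=1_n$). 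Put $e_X:=\tfrac{1}{{\delta'}^{|X|}}\omega_X=\tfrac{1}{{\delta'}^{|X|}}\langle u_X\rangle$, so that $1_e=\sum_{X\in\mathbf{e}(\underline n)}e_X$ and $1_o=\sum_{X\in\mathbf{o}(\underline n)}e_X$. Since $\langle u_X\rangle$ is the basis element indexed by the diagram $u_X$ and $n-\#^p(u_X)=|X|$, we have $e_X\in\mathfrak{R}_n^0$ when $|X|$ is even and $e_X\in\mathfrak{R}_n^1$ when $|X|$ is odd; hence $1_e\in\mathfrak{R}_n^0$ and $1_o\in\mathfrak{R}_n^1$.

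The heart of the argument is two applications of Proposition~\ref{prop1}\eqref{prop1.2}. In $u_X|u_Y$ the equator carries a singleton of $u_X$ exactly at the positions of $X$ and a singleton of $u_Y$ exactly at those of $Y$, so a singleton meets a pair unless $X=Y$; for $X=Y$ the straightening produces one zero-length open string at each position of $X$, no closed loops, and $u_X\circ u_X=u_X$. Thus $\omega_X\omega_Y=\delta_{X,Y}\,{\delta'}^{|X|}\omega_X$, i.e.\ $e_Xe_Y=\delta_{X,Y}e_X$, so the $e_X$ are pairwise orthogonal idempotents and $1_e,1_o$ are idempotents with $1_e1_o=1_o1_e=0$. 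Applying the same criterion to $u_X|d$ for an arbitrary $d\in\mathtt{R}_n$: $\langle u_X\rangle\langle d\rangle=0$ unless $X$ equals the set $X_0:=\{i\in\underline n:\{i\}\in d\}$ of left-singleton positions of $d$, and for $X=X_0$ one has $u_{X_0}\circ d=d$ with exactly $|X_0|$ open strings and no loops in the straightening, so $\langle u_{X_0}\rangle\langle d\rangle={\delta'}^{|X_0|}\langle d\rangle$ and $e_{X_0}\langle d\rangle=\langle d\rangle$. Counting the left vertices of $d$ gives $n=\#^p(d)+2c+|X_0|$ with $c$ the number of cups among them, so $|X_0|\equiv n-\#^p(d)\pmod 2$; hence exactly one of $1_e,1_o$ picks up the term $e_{X_0}$, giving $1_e\langle d\rangle=\langle d\rangle$, $1_o\langle d\rangle=0$ if $d$ is even and $1_e\langle d\rangle=0$, $1_o\langle d\rangle=\langle d\rangle$ if $d$ is odd. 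The antiautomorphism $\star$ fixes each $u_X$, hence each $\langle u_X\rangle$ and the sums $1_e,1_o$, and preserves $\#^p$; applying it yields the mirror identities for right multiplication by $1_e,1_o$.

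These facts assemble into the statement. They give $(1_e+1_o)\langle d\rangle=\langle d\rangle=\langle d\rangle(1_e+1_o)$ for all $d$, so $1_e+1_o=1_n$ by uniqueness of the identity, and they show $1_e,1_o$ are central; hence $\mathfrak{R}_n^0=1_e\mathfrak{R}_n=\mathfrak{R}_n1_e$ and $\mathfrak{R}_n^1=1_o\mathfrak{R}_n=\mathfrak{R}_n1_o$ are complementary two-sided ideals with $\mathfrak{R}_n=\mathfrak{R}_n^0\oplus\mathfrak{R}_n^1$ as algebras, which is \eqref{prop2.1} (here one checks that $1_e\mathfrak{R}_n$, spanned by the $\langle d\rangle$ with $d$ even, is precisely the subalgebra $\mathfrak{R}_n^0$ defined in the text, and similarly for $\mathfrak{R}_n^1$). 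Finally $1_e\in\mathfrak{R}_n^0$ acts as the identity on $1_e\mathfrak{R}_n=\mathfrak{R}_n^0$, so it is the identity of $\mathfrak{R}_n^0$, and likewise $1_o$ is the identity of $\mathfrak{R}_n^1$; this is \eqref{prop2.2}. An alternative, equivalent route to \eqref{prop2.1} not using the $e_X$ is to read off directly from Proposition~\ref{prop1}\eqref{prop1.2} that $\langle d\rangle\langle d'\rangle\neq 0$ forces the right-singleton set of $d$ to equal the left-singleton set of $d'$ (hence $d$ and $d'$ have equal parity) and that then $d\circ d'$ inherits the singleton sets of $d$ on the left and of $d'$ on the right, so it has that same parity.

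I expect the only real obstacle to be the diagram-calculus bookkeeping in the two product computations: confirming that the scalar produced by the straightening in Proposition~\ref{prop1}\eqref{prop1.2} is exactly ${\delta'}$ to the number of zero-length open strings created at the matched positions, with no closed loops, no genuine open strings, and no stray $(\delta-1)$ factors, and that the surviving diagram is the expected $u_X$, respectively $d$. In the alternative argument for \eqref{prop2.1} the delicate point is that, under the non-vanishing hypothesis, a chain of $d|d'$ starting at an outer vertex of $d$ that lies in a pair cannot dead-end at an equatorial singleton (that would be a "singleton meeting a pair"), so such a vertex keeps its singleton/non-singleton status in $d\circ d'$.
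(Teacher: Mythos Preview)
Your proposal is correct and, in essence, carries out the ``direct calculation'' that the paper invokes for claim~\eqref{prop2.2}, but you organise the argument in the reverse order from the paper. The paper first proves \eqref{prop2.1} by a purely combinatorial argument about how propagating lines behave in a composition $d|d'$ (the paragraph you sketch at the end as an ``alternative route''): nonvanishing of $\langle d\rangle\langle d'\rangle$ forces the singleton parities to match, and then one tracks each propagating line of $d$ through the equator to see that the parity of $d\circ d'$ equals that of $d$. Only after \eqref{prop2.1} is established does the paper assert \eqref{prop2.2} as a direct computation from Proposition~\ref{prop1}, without details.

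Your route instead establishes the orthogonal idempotent system $\{e_X\}$ first, computes $e_X\langle d\rangle$ for arbitrary $d$, and reads off both the central-idempotent decomposition and the explicit identities $1_e,1_o$ in one stroke. This has the advantage of making \eqref{prop2.2} entirely explicit rather than leaving it to the reader, and of yielding centrality and $1_e+1_o=1_n$ automatically; the paper's approach, by contrast, separates the structural statement \eqref{prop2.1} from the identification of the units, and its propagating-line argument for parity preservation is perhaps more conceptual but requires a separate check for \eqref{prop2.2}. Both arguments rest on the same application of Proposition~\ref{prop1}\eqref{prop1.2}.
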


\begin{proof}
Notice that a partial Brauer diagram with $l$ propagating lines and $n-l$ odd has an odd number of singletons on 
both the left and the right edges. From Proposition~\ref{prop1}\eqref{prop1.2} it follows that the product 
$\langle d\rangle\langle d'\rangle$, for $d,d'\in \mathtt{R}_n$, is zero unless $n-\#^p(d)$ and $n-\#^p(d')$
are of the same parity. Hence, to complete the proof of claim \eqref{prop2.1} we just need to show that for all $d,d'$ 
with nonzero $\langle d\rangle\langle d'\rangle$ the parity of $dd'$ is the same as the parity of $d$ (or $d'$).

By Proposition~\ref{prop1}\eqref{prop1.2}, $\langle d\rangle\langle d'\rangle\neq 0$ implies that the singletons in 
the equator match up. Consider the contribution of a propagating line in $d$, say, to $dd'$.  This either forms part 
of a propagating line in $dd'$ (possibly via a chain of arcs in the equator) in combination with precisely one 
propagating line from $d'$; or else is joined, via a chain  of arcs in the equator, to another propagating line in 
$d$, and hence does not contribute to a propagating line in $dd'$. Claim \eqref{prop2.1} follows.

Claim \eqref{prop2.2} follows from Proposition~\ref{prop1} by a direct calculation.
\end{proof}

Note that from Proposition~\ref{prop2} it follows that $1_n=1_e+1_o$. We will call 
$\mathfrak{R}_n^0$ and $\mathfrak{R}_n^1$ the {\em even} and the {\em odd} summands, respectively.
After Proposition~\ref{prop2} to prove Theorem~\ref{thm1} it is left to analyze the two summands
$\mathfrak{R}_n^0$ and $\mathfrak{R}_n^1$.

\subsection{The even summand}\label{s2.2}

\begin{proposition}\label{prop3}
Assume $\delta'\in\Bbbk^{\times}$, then we have the following:
\begin{enumerate}[$($i$)$]
\item\label{prop3.1} For $d\in\mathtt{R}_n$ we have $\hat{u}_n d\hat{u}_n\neq 0$ if and only if
$d\in \mathtt{B}_n$.
\item\label{prop3.2} The set $\{\langle d\rangle:d\in \mathtt{B}_n\}$ is a basis of $\hat{u}_n\mathfrak{R}_n\hat{u}_n$.
\item\label{prop3.3} The $\Bbbk$-linear map $\gamma:\hat{u}_n\mathfrak{R}_n\hat{u}_n\to \mathfrak{B}_n(\delta-1)$, 
given by $\langle d\rangle\mapsto d$, $d\in \mathtt{B}_n$, is an isomorphism of $\Bbbk$-algebras.
\end{enumerate}
\end{proposition}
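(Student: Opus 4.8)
The plan is to establish the three claims in sequence, since each feeds the next, and to lean heavily on Proposition~\ref{prop1} as the computational engine.

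\textbf{Claim \eqref{prop3.1}.} Recall from the preliminaries that $\hat{u}_n=\langle 1_n\rangle$, so $\hat{u}_n d\hat{u}_n=\langle 1_n\rangle\langle d\rangle\langle 1_n\rangle$ (after fixing the identification of $d$ with $\langle d^{\,l(d)\cup r(d)}\rangle$-type data, or more simply just compute $\hat u_n \langle d\rangle \hat u_n$ and compare). I would argue as follows. If $d$ has a singleton, say $\{i\}$ with $i\in\underline{n}$, then at the equator of $1_n|d$ the propagating line of $1_n$ at level $i$ (which is a pair part in the diagram $1_n$) meets this singleton; by Proposition~\ref{prop1}\eqref{prop1.2}, since after decorating all parts of $1_n$ we have $\langle 1_n\rangle=\hat u_n$ and the ``singleton meets a pair part at the equator'' condition is triggered, the product vanishes. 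The symmetric argument with $\hat{u}_n$ on the right handles a singleton $\{i'\}$ with $i\in\underline{n}'$. Conversely if $d$ has no singletons at all, i.e. $d\in\mathtt{B}_n$, then every part of $d$ is a pair and no singletons occur anywhere in $1_n|d|1_n$, so by the second case of Proposition~\ref{prop1}\eqref{prop1.2} we get $\hat u_n d\hat u_n=(\delta-1)^k\langle 1_n\circ d\circ 1_n\rangle=(\delta-1)^k\langle d\rangle$ for some $k\ge0$ (in fact $k=0$ here since composing with the identity diagram produces no loops, but that is immaterial); since $\delta-1\in\Bbbk^\times$ this is nonzero. This proves \eqref{prop3.1}.

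\textbf{Claim \eqref{prop3.2}.} By Proposition~\ref{prop1}\eqref{prop1.1} the set $\{\langle d\rangle : d\in\mathtt{R}_n\}$ is a basis of $\mathfrak{R}_n$, hence $\{\hat u_n\langle d\rangle\hat u_n : d\in\mathtt{R}_n\}$ spans the corner algebra $\hat u_n\mathfrak{R}_n\hat u_n$. By \eqref{prop3.1}, the nonzero elements among these are (up to the invertible scalar $(\delta-1)^{k}$) exactly the $\langle d\rangle$ with $d\in\mathtt{B}_n$, which already lie in $\hat u_n\mathfrak R_n\hat u_n$ since $\langle d\rangle=\hat u_{l(d)}d\hat u_{r(d)}=\hat u_n d\hat u_n$ for a Brauer diagram using \eqref{eq2} (as $l(d)=r(d)=\underline n$). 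So $\{\langle d\rangle : d\in\mathtt{B}_n\}$ spans; and it is linearly independent because it is a subset of the basis $\{\langle d\rangle : d\in\mathtt{R}_n\}$ of $\mathfrak{R}_n$.

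\textbf{Claim \eqref{prop3.3}.} The map $\gamma$ is well-defined and bijective as a linear map, since by \eqref{prop3.2} it sends a basis to the standard diagram basis of $\mathfrak{B}_n(\delta-1)$. It is unital: $\gamma(\hat u_n)=\gamma(\langle 1_n\rangle)=1_n$, the identity Brauer diagram. The content is multiplicativity. For $d,d'\in\mathtt{B}_n$ one computes $\langle d\rangle\langle d'\rangle$ inside $\hat u_n\mathfrak R_n\hat u_n$: since all parts of $d$ and $d'$ are pairs, \emph{no} singletons ever appear at the equator, so the first case of Proposition~\ref{prop1}\eqref{prop1.2} never applies, and no open strings appear in the straightening of $d|d'$ either (open strings require singletons), so $k'=0$; thus $\langle d\rangle\langle d'\rangle=(\delta-1)^{k}\langle d\circ d'\rangle$ where $k$ is the number of closed loops removed when straightening $d|d'$. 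On the Brauer side, $d\cdot d'$ in $\mathfrak{B}_n(\delta-1)$ equals $(\delta-1)^{k}(d\circ d')$ with the \emph{same} $k$ (the loop count is purely topological and independent of which algebra one works in, and $d\circ d'$ is by definition the $\delta=1$ straightening, i.e. the underlying Brauer diagram). Hence $\gamma(\langle d\rangle\langle d'\rangle)=(\delta-1)^k\gamma(\langle d\circ d'\rangle)=(\delta-1)^k(d\circ d')=d\cdot d'=\gamma(\langle d\rangle)\gamma(\langle d'\rangle)$, and multiplicativity extends bilinearly.

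The only genuinely delicate point, and the one I would be most careful about, is the bookkeeping in Claim~\eqref{prop3.3}: one must be sure that ``the number of closed loops removed in straightening $d|d'$'' is literally the same integer $k$ that governs the Brauer product $d\cdot d'$ in $\mathfrak B_n(\delta-1)$ — i.e. that decorating all lines has not changed the topology of the equator. This is exactly guaranteed by the shape of Proposition~\ref{prop1}\eqref{prop1.2}, whose statement already separates the loop count $k$ (for Brauer-type diagrams with no singletons these are the only components produced) from the open-string count $k'$; so it reduces to the observation that when $d,d'\in\mathtt B_n$ the equator $d|d'$ consists solely of propagating lines and closed loops, making $k'=0$ automatic and $k$ the usual Brauer loop number. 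Everything else is routine verification that a corner algebra of a basis-diagram algebra behaves as expected.
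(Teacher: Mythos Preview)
Your approach matches the paper's, which also derives all three claims directly from Proposition~\ref{prop1}; your version is simply a more explicit unpacking of that. One slip to fix: in the ``if'' direction of \eqref{prop3.1} you invoke $\delta-1\in\Bbbk^\times$, but Proposition~\ref{prop3} assumes only $\delta'\in\Bbbk^\times$. You already have the correct argument in your parenthetical --- $k=0$ since composing with the identity diagram $1_n$ produces no loops --- so $\hat u_n d\hat u_n=\langle d\rangle$, which is nonzero by Proposition~\ref{prop1}\eqref{prop1.1}; make that the argument and drop the appeal to $\delta-1$. (Also, the clean version of your opening identity is $\hat u_n d\hat u_n=\hat u_n\langle d\rangle\hat u_n=\langle 1_n\rangle\langle d\rangle\langle 1_n\rangle$, the middle step coming from \eqref{eq2} via $\hat u_n\hat u_{l(d)}=\hat u_n$.)
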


\begin{proof}
As $\hat{u}_n$ is an idempotent, we have 
$\langle d\rangle=\hat{u}_n d\hat{u}_n=\hat{u}_n\hat{u}_n d\hat{u}_n\hat{u}_n=\hat{u}_n\langle d\rangle\hat{u}_n$. 
Now claim \eqref{prop3.1} follows directly from Proposition~\ref{prop1}\eqref{prop1.2}. Further,
claim  \eqref{prop3.2} follows from \eqref{prop3.1} and
Proposition~\ref{prop1}\eqref{prop1.1}. Finally, claim  \eqref{prop3.3} follows from \eqref{prop3.2}
and Proposition~\ref{prop1}\eqref{prop1.2}.
\end{proof}

From Proposition~\ref{prop2} we have the following decomposition:
\begin{equation}\label{eq652}
\mathfrak{R}_n\text{-}\mathrm{mod}\cong\mathfrak{R}_n^0\text{-}\mathrm{mod}\oplus\mathfrak{R}_n^1\text{-}\mathrm{mod}.
\end{equation}
The first direct summand of this decomposition is described by the following statement:

\begin{proposition}\label{prop4}
Assume $\delta',(\delta-1)\in\Bbbk^{\times}$, then the functor 
\begin{equation}\label{eqf1}
\mathrm{F}:=
\hat{u}_n\mathfrak{R}_n\otimes_{\mathfrak{R}_n}{}_{-}: \mathfrak{R}_n\text{-}\mathrm{mod}
\to \hat{u}_n\mathfrak{R}_n\hat{u}_n\text{-}\mathrm{mod}
\end{equation}
(or, equivalently, $\mathrm{F}\cong\mathrm{Hom}_{\mathfrak{R}_n}(\mathfrak{R}_n\hat{u}_n,{}_{-})$)
induces an equivalence $\mathfrak{R}_n^0\text{-}\mathrm{mod}\cong \mathfrak{B}_n(\delta-1)\text{-}\mathrm{mod}$.
\end{proposition}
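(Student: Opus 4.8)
The plan is to recognise $\mathrm{F}$ as idempotent truncation by the idempotent $\hat{u}_n$ and to apply the standard criterion for such a functor to be an equivalence. Since $\hat{u}_n=\langle 1_n\rangle$ has propagating number $n$ it lies in $\mathfrak{R}_n^0$, so $\hat{u}_n\mathfrak{R}_n^1=\mathfrak{R}_n^1\hat{u}_n=0$; hence $\mathrm{F}$ annihilates the summand $\mathfrak{R}_n^1\text{-}\mathrm{mod}$ in \eqref{eq652}, while on $\mathfrak{R}_n^0\text{-}\mathrm{mod}$ it is naturally isomorphic to $\hat{u}_n\mathfrak{R}_n^0\otimes_{\mathfrak{R}_n^0}{}_{-}\colon\mathfrak{R}_n^0\text{-}\mathrm{mod}\to \hat{u}_n\mathfrak{R}_n^0\hat{u}_n\text{-}\mathrm{mod}$ (the $\mathfrak{R}_n^1$-action on both tensor factors being zero). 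By Proposition~\ref{prop3}\eqref{prop3.3} the target algebra $\hat{u}_n\mathfrak{R}_n^0\hat{u}_n=\hat{u}_n\mathfrak{R}_n\hat{u}_n$ is isomorphic to $\mathfrak{B}_n(\delta-1)$. Now I recall the elementary fact that, for an idempotent $e$ in a finite-dimensional unital algebra $A$, the functor $eA\otimes_A{}_{-}\colon A\text{-}\mathrm{mod}\to eAe\text{-}\mathrm{mod}$ is an equivalence precisely when $AeA=A$ (the natural map $Ae\otimes_{eAe}eA\to A$, $ae\otimes eb\mapsto aeb$, has image $AeA$). Applying this with $A=\mathfrak{R}_n^0$ and $e=\hat{u}_n$, the proposition is reduced to the single assertion that $\hat{u}_n$ is a full idempotent in $\mathfrak{R}_n^0$, i.e.\ $\mathfrak{R}_n^0\hat{u}_n\mathfrak{R}_n^0=\mathfrak{R}_n^0$.

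To establish this, set $\mathcal{I}:=\mathfrak{R}_n^0\hat{u}_n\mathfrak{R}_n^0$; I must show $\langle d\rangle\in\mathcal{I}$ for every $d\in\mathtt{R}_n^0$. Since $\hat{u}_n\mathfrak{R}_n\hat{u}_n\subseteq\mathcal{I}$, Proposition~\ref{prop3}\eqref{prop3.2} already gives $\langle d\rangle\in\mathcal{I}$ for all $d\in\mathtt{B}_n$. For a general $d\in\mathtt{R}_n^0$ note first, by a parity count using that each arc on one side of a diagram uses up two vertices of that side, that $d$ has an even number of singletons on the left edge and an even number on the right; fix a pairing of the left singletons into arcs and of the right singletons into arcs, and let $d_\flat\in\mathtt{B}_n$ be the Brauer diagram obtained from $d$ by filling in these arcs, so that $\langle d_\flat\rangle\in\mathcal{I}$. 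Next let $a\in\mathtt{R}_n^0$ be the diagram whose only non-propagating parts are, for each chosen left pair $\{i,j\}$ of $d$, the singletons $\{i\},\{j\}$ together with the arc $\{i',j'\}$, and let $b\in\mathtt{R}_n^0$ be the mirror construction on the right. Concatenating $a\vert d_\flat\vert b$, each of the distinguished equator positions produces exactly one closed loop (an arc of $a$ or of $b$ meeting the corresponding arc of $d_\flat$) and no open string arises, so Proposition~\ref{prop1}\eqref{prop1.2} yields $\langle a\rangle\langle d_\flat\rangle\langle b\rangle=(\delta-1)^{\#^s(d)/2}\langle d\rangle$. As $\delta-1\in\Bbbk^{\times}$ and $\mathcal{I}$ is a two-sided ideal of $\mathfrak{R}_n^0$ containing $\langle d_\flat\rangle$, this forces $\langle d\rangle\in\mathcal{I}$; hence $\mathcal{I}=\mathfrak{R}_n^0$, which finishes the proof.

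The substantive part is the diagrammatic bookkeeping in the second paragraph: checking that in the triple product no singleton ever meets a pair part at either equator — which is exactly why $d_\flat$ is taken with no singletons at all — verifying the count of removed loops, and confirming the two parity statements; all of this is routine from Proposition~\ref{prop1}, and I expect it, rather than the module-theoretic reduction, to be the only point requiring care. Finally, observe that $\delta'$ enters only through the definition of $\hat{u}_n$ and the decorated calculus, playing no role in the combinatorics above; this is consistent with the remark after Theorem~\ref{thm1} that a unit $\delta'$ can be scaled out of representation-theoretic calculations.
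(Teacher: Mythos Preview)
Your proof is correct and follows essentially the same approach as the paper: both reduce to showing that the trace ideal generated by $\hat{u}_n$ equals $\mathfrak{R}_n^0$, and both establish this via a sandwich calculation in the decorated calculus that produces the target element up to a unit power of $(\delta-1)$. The only organisational difference is that the paper shows the unit $1_e$ lies in the ideal (by treating each $\omega_X$ with $|X|$ even, after an $S_n$-symmetry reduction to $\omega_{\{1,\dots,2k\}}$), whereas you show directly that every basis element $\langle d\rangle$ with $d\in\mathtt{R}_n^0$ lies in the ideal; your elements $a,d_\flat,b$ specialise exactly to the paper's $\langle x\rangle,\hat{u}_n,\langle x^\star\rangle$ when $d=u_{\{1,\dots,2k\}}$.
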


\begin{proof}
By \cite[Section~5]{Au}, $\mathrm{F}$ induces an equivalence between the full subcategory $\mathcal{X}$ of 
$\mathfrak{R}_n\text{-}\mathrm{mod}$, consisting of modules $M$ having a presentation $X_1\to X_0\to M\to 0$
with $X_1,X_0\in \mathrm{add}(\mathfrak{R}_n\hat{u}_n)$, where $\mathrm{add}(\mathfrak{R}_n\hat{u}_n)$ stands
for the additive closure of $\mathfrak{R}_n\hat{u}_n$, and $\hat{u}_n\mathfrak{R}_n\hat{u}_n\text{-}\mathrm{mod}$.
By Proposition~\ref{prop3}, the category $\hat{u}_n\mathfrak{R}_n\hat{u}_n\text{-}\mathrm{mod}$ is equivalent to 
$\mathfrak{B}_n(\delta-1)\text{-}\mathrm{mod}$. So, to complete the proof we have just to show that 
$\mathcal{X}\cong \mathfrak{R}_n^0\text{-}\mathrm{mod}$. For this it is enough to show that the trace 
$\mathfrak{R}_n\hat{u}_n\mathfrak{R}_n$ of $\mathfrak{R}_n\hat{u}_n$ in $\mathfrak{R}_n$ coincides with 
$\mathfrak{R}_n^0$. As $\hat{u}_n\in \mathfrak{R}_n^0$, we have $\mathfrak{R}_n\hat{u}_n\mathfrak{R}_n\subset
\mathfrak{R}_n^0$. 

As $\mathfrak{R}_n\hat{u}_n\mathfrak{R}_n$ is an ideal, it is left to show that it contains $1_e$. For this
it is enough to show that $\mathfrak{R}_n\hat{u}_n\mathfrak{R}_n$ contains $\omega_X$ for each 
$X\subset\underline{n}$ such that $|X|$ is even (see Proposition~\ref{prop2}\eqref{prop2.2}). 
As $\mathfrak{R}_n\hat{u}_n\mathfrak{R}_n$ is stable under 
multiplication with the elements of the symmetric group, it is enough to show that 
$\mathfrak{R}_n\hat{u}_n\mathfrak{R}_n$ contains $\omega_{\{1,2,\dots,2k\}}$ for each $k$ such that 
$2k\leq n$. Consider the element $x:=(u\otimes u\otimes (vu)^{\star})^{\otimes k}\otimes 1^{\otimes (n-2k)}$.
Then a direct calculation using Proposition~\ref{prop1} shows that
\begin{displaymath}
(\delta-1)^k\omega_{\{1,2,\dots,2k\}}= \langle x\rangle\hat{u}_n\langle x^{\star}\rangle
\end{displaymath}
(this is  illustrated by Figure~\ref{fig11}) and the claim follows inverting $(\delta-1)^k$.
\end{proof}

\begin{figure}
\special{em:linewidth 0.4pt} \unitlength 0.80mm
\begin{picture}(125,40)
\put(05,10){\makebox(0,0)[cc]{$\bullet$}}
\put(05,15){\makebox(0,0)[cc]{$\bullet$}}
\put(05,20){\makebox(0,0)[cc]{$\bullet$}}
\put(05,25){\makebox(0,0)[cc]{$\bullet$}}
\put(05,30){\makebox(0,0)[cc]{$\bullet$}}
\put(20,10){\makebox(0,0)[cc]{$\bullet$}}
\put(20,15){\makebox(0,0)[cc]{$\bullet$}}
\put(20,20){\makebox(0,0)[cc]{$\bullet$}}
\put(20,25){\makebox(0,0)[cc]{$\bullet$}}
\put(20,30){\makebox(0,0)[cc]{$\bullet$}}
\put(25,10){\makebox(0,0)[cc]{$\bullet$}}
\put(25,15){\makebox(0,0)[cc]{$\bullet$}}
\put(25,20){\makebox(0,0)[cc]{$\bullet$}}
\put(25,25){\makebox(0,0)[cc]{$\bullet$}}
\put(25,30){\makebox(0,0)[cc]{$\bullet$}}
\put(40,10){\makebox(0,0)[cc]{$\bullet$}}
\put(40,15){\makebox(0,0)[cc]{$\bullet$}}
\put(40,20){\makebox(0,0)[cc]{$\bullet$}}
\put(40,25){\makebox(0,0)[cc]{$\bullet$}}
\put(40,30){\makebox(0,0)[cc]{$\bullet$}}
\put(45,10){\makebox(0,0)[cc]{$\bullet$}}
\put(45,15){\makebox(0,0)[cc]{$\bullet$}}
\put(45,20){\makebox(0,0)[cc]{$\bullet$}}
\put(45,25){\makebox(0,0)[cc]{$\bullet$}}
\put(45,30){\makebox(0,0)[cc]{$\bullet$}}
\put(60,10){\makebox(0,0)[cc]{$\bullet$}}
\put(60,15){\makebox(0,0)[cc]{$\bullet$}}
\put(60,20){\makebox(0,0)[cc]{$\bullet$}}
\put(60,25){\makebox(0,0)[cc]{$\bullet$}}
\put(60,30){\makebox(0,0)[cc]{$\bullet$}}
\put(100,10){\makebox(0,0)[cc]{$\bullet$}}
\put(100,15){\makebox(0,0)[cc]{$\bullet$}}
\put(100,20){\makebox(0,0)[cc]{$\bullet$}}
\put(100,25){\makebox(0,0)[cc]{$\bullet$}}
\put(100,30){\makebox(0,0)[cc]{$\bullet$}}
\put(115,10){\makebox(0,0)[cc]{$\bullet$}}
\put(115,15){\makebox(0,0)[cc]{$\bullet$}}
\put(115,20){\makebox(0,0)[cc]{$\bullet$}}
\put(115,25){\makebox(0,0)[cc]{$\bullet$}}
\put(115,30){\makebox(0,0)[cc]{$\bullet$}}
\dashline{1}(05,05)(05,35)
\dashline{1}(20,35)(05,35)
\dashline{1}(20,35)(20,05)
\dashline{1}(05,05)(20,05)
\dashline{1}(25,05)(25,35)
\dashline{1}(40,35)(25,35)
\dashline{1}(40,35)(40,05)
\dashline{1}(25,05)(40,05)
\dashline{1}(45,05)(45,35)
\dashline{1}(60,35)(45,35)
\dashline{1}(60,35)(60,05)
\dashline{1}(45,05)(60,05)
\dashline{1}(100,05)(100,35)
\dashline{1}(115,35)(100,35)
\dashline{1}(115,35)(115,05)
\dashline{1}(100,05)(115,05)
\dottedline[.]{1}(20.00,10.00)(25.00,10.00)
\dottedline[.]{1}(20.00,15.00)(25.00,15.00)
\dottedline[.]{1}(20.00,20.00)(25.00,20.00)
\dottedline[.]{1}(20.00,25.00)(25.00,25.00)
\dottedline[.]{1}(20.00,30.00)(25.00,30.00)
\dottedline[.]{1}(40.00,10.00)(45.00,10.00)
\dottedline[.]{1}(40.00,15.00)(45.00,15.00)
\dottedline[.]{1}(40.00,20.00)(45.00,20.00)
\dottedline[.]{1}(40.00,25.00)(45.00,25.00)
\dottedline[.]{1}(40.00,30.00)(45.00,30.00)
\drawline(05,10)(20,10)
\drawline(25,10)(40,10)
\drawline(45,10)(60,10)
\drawline(100,10)(115,10)
\drawline(25,15)(40,15)
\drawline(25,20)(40,20)
\drawline(25,25)(40,25)
\drawline(25,30)(40,30)
\linethickness{1pt}
\qbezier(20,15)(15,17.50)(20,20)
\qbezier(20,25)(15,27.50)(20,30)
\qbezier(45,15)(50,17.50)(45,20)
\qbezier(45,25)(50,27.50)(45,30)
\put(70,20){\makebox(0,0)[cc]{$=$}}
\put(85,20){\makebox(0,0)[cc]{$(\delta-1)^2$}}
\put(12.50,10){\makebox(0,0)[cc]{$\blacksquare$}}
\put(32.50,10){\makebox(0,0)[cc]{$\blacksquare$}}
\put(52.50,10){\makebox(0,0)[cc]{$\blacksquare$}}
\put(107.50,10){\makebox(0,0)[cc]{$\blacksquare$}}
\put(32.50,15){\makebox(0,0)[cc]{$\blacksquare$}}
\put(32.50,20){\makebox(0,0)[cc]{$\blacksquare$}}
\put(32.50,25){\makebox(0,0)[cc]{$\blacksquare$}}
\put(32.50,30){\makebox(0,0)[cc]{$\blacksquare$}}
\put(16.50,17.50){\makebox(0,0)[cc]{$\blacksquare$}}
\put(16.50,27.50){\makebox(0,0)[cc]{$\blacksquare$}}
\put(48.50,17.50){\makebox(0,0)[cc]{$\blacksquare$}}
\put(48.50,27.50){\makebox(0,0)[cc]{$\blacksquare$}}
\end{picture}
\caption{Calculation in the proof of Proposition~\ref{prop4}, $n=5$, $k=2$.}\label{fig11} 
\end{figure}

\subsection{The odd summand}\label{s2.3}

To simplify notation in this subsection we set $\omega:=\omega_{\{1\}}$. Define the injective map
$\varphi:\mathtt{B}_{n-1}\to \mathtt{R}_n$ by $\varphi(d)=(u\otimes u^{\star})\otimes d$ for $d\in \mathtt{B}_{n-1}$.

\begin{proposition}\label{prop5}
Assume $\delta'\in\Bbbk^{\times}$, then we have the following:
\begin{enumerate}[$($i$)$]
\item\label{prop5.1} For $d\in\mathtt{R}_n$ we have $\omega d\omega\neq 0$ if and only if
$d\in \varphi(\mathtt{B}_{n-1})$.
\item\label{prop5.2} The set $\{\langle d\rangle:d\in \varphi(\mathtt{B}_{n-1})\}$ 
is a basis of $\omega\mathtt{R}_n\omega$.
\item\label{prop5.3} The $\Bbbk$-linear map $\gamma:\omega\mathfrak{R}_n\omega\to \mathfrak{B}_n(\delta-1)$, 
given by $\langle d\rangle\mapsto \varphi^{-1}(d)$, $d\in \varphi(\mathtt{B}_{n-1})$, is an isomorphism of 
$\Bbbk$-algebras.
\end{enumerate}
\end{proposition}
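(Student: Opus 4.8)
The plan is to mirror the structure of the proof of Proposition~\ref{prop3} for the even summand, now with the idempotent $\omega=\omega_{\{1\}}=\langle u_{n,1}\rangle$ in place of $\hat{u}_n$ and the embedding $\varphi$ playing the role of the identity inclusion $\mathtt{B}_n\hookrightarrow\mathtt{R}_n$. First I would record, as in the proof of Proposition~\ref{prop3}, that since $\omega$ is idempotent we have $\omega\langle d\rangle\omega=\omega\hat{u}_{l(d)}d\hat{u}_{r(d)}\omega$, and then analyze when this is nonzero using Proposition~\ref{prop1}\eqref{prop1.2}. The key point for \eqref{prop5.1} is that $\omega=\langle u_{n,1}\rangle$, whose underlying undecorated diagram $u_{n,1}=u\otimes u^\star\otimes 1^{\otimes(n-1)}$ has singletons at positions $1$ and $1'$; by the first clause of Proposition~\ref{prop1}\eqref{prop1.2}, the product $\omega\langle d\rangle$ vanishes unless the singleton at position $1$ of $\omega$ meets a singleton of $d$ at the equator, which forces $\{1\}$ to be a singleton of $d$ on the left edge; symmetrically, $\langle d\rangle\omega\ne 0$ forces $\{1'\}$ to be a singleton of $d$ on the right edge. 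One then checks that these two conditions together say precisely that $d$ has the form $(u\otimes u^\star)\otimes d'$ with $d'\in\mathtt{R}_{n-1}$, and — because $\omega$ has a full set of propagating lines on positions $2,\dots,n$ and $2',\dots,n'$ — the surviving strand structure forces $d'\in\mathtt{B}_{n-1}$, i.e. $d\in\varphi(\mathtt{B}_{n-1})$; conversely any such $d$ clearly gives $\omega d\omega\ne 0$. This is the step I expect to need the most care: the bookkeeping of how the singleton-at-$1$ condition, combined with the propagating lines of $\omega$, forces the $\mathtt{B}_{n-1}$-shape on the rest of the diagram, and that no loops or strings are created at the equator so the straightening scalar is a nonzero power of $(\delta-1)$ and ${\delta'}$, hence a unit.

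With \eqref{prop5.1} in hand, part \eqref{prop5.2} is immediate: by Proposition~\ref{prop1}\eqref{prop1.1} the $\langle d\rangle$, $d\in\mathtt{R}_n$, form a basis of $\mathfrak{R}_n$, and $\omega\langle d\rangle\omega$ is, for $d\in\varphi(\mathtt{B}_{n-1})$, a nonzero scalar multiple of $\langle d\rangle$ and is $0$ otherwise; since $\omega$ is idempotent, $\omega\mathfrak{R}_n\omega$ is spanned by the $\omega\langle d\rangle\omega$, so $\{\langle d\rangle:d\in\varphi(\mathtt{B}_{n-1})\}$ is a basis. (Strictly: $\langle d\rangle=\omega\langle d\rangle\omega$ up to a unit for $d\in\varphi(\mathtt{B}_{n-1})$ because $l(d)=r(d)=\{1\}$ for such $d$, so $\langle d\rangle$ itself already lies in $\omega\mathfrak{R}_n\omega$.)

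For \eqref{prop5.3} I would show $\gamma$ is a well-defined bijective linear map from the description of the basis in \eqref{prop5.2}, and then check multiplicativity by comparing the straightening of $\varphi(d_1)|\varphi(d_2)$ inside $\mathfrak{R}_n$ with that of $d_1|d_2$ inside $\mathfrak{B}_{n-1}(\delta-1)$. Writing $\varphi(d_i)=(u\otimes u^\star)\otimes d_i$, the equator of $\varphi(d_1)|\varphi(d_2)$ is the disjoint union of the equator of $d_1|d_2$ (on the last $n-1$ strands) with the trivial configuration on the first strand: the singletons of the two $u\otimes u^\star$ factors meet, producing exactly one open string, so by Proposition~\ref{prop1}\eqref{prop1.2} the product $\langle\varphi(d_1)\rangle\langle\varphi(d_2)\rangle$ picks up one extra factor of $\delta'$ relative to $\langle d_1\circ d_2\rangle$ as computed in the $\mathtt{B}_{n-1}$-block, while the loops and remaining open strings are precisely those of $d_1|d_2$, each contributing $(\delta-1)$ exactly as in $\mathfrak{B}_{n-1}(\delta-1)$. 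Thus $\langle\varphi(d_1)\rangle\langle\varphi(d_2)\rangle=\delta'(\delta-1)^{k}\langle\varphi(d_1\circ d_2)\rangle$ for the appropriate $k$, matching $d_1\cdot d_2=(\delta-1)^k\,(d_1\circ d_2)$ in $\mathfrak{B}_{n-1}(\delta-1)$ up to the single harmless overall factor of $\delta'$; absorbing that $\delta'$ into the normalization of $\gamma$ (or, equivalently, noting $\omega=\frac{1}{\delta'}\cdot(\text{genuine idempotent scaled appropriately})$, exactly as in the treatment of $\mathtt{R}_n^{(l)}$ in \S\ref{s12.2}) makes $\gamma$ an algebra isomorphism. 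The main obstacle, as noted, is the diagrammatic verification in \eqref{prop5.1}; everything after that is a transcription of the arguments already used for Proposition~\ref{prop3}.
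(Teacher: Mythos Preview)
Your approach is essentially the same as the paper's, which is even terser (it simply cites Proposition~\ref{prop1} for all three parts). Two small corrections: for $d\in\varphi(\mathtt{B}_{n-1})$ one has $l(d)=r(d)=\{2,\dots,n\}$, not $\{1\}$, since $l(d)$ records the \emph{non}-singleton positions; and $\omega$ is not literally idempotent---one computes $\omega^2=\delta'\omega$, so it is $\frac{1}{\delta'}\omega$ that is the honest idempotent. Your observation about the stray factor of $\delta'$ in part~\eqref{prop5.3} is correct and is exactly explained by this: the map $\langle d\rangle\mapsto \delta'\,\varphi^{-1}(d)$ (equivalently, working throughout with the idempotent $\frac{1}{\delta'}\omega$) is the genuine unital algebra isomorphism, a normalization the paper glosses over.
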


\begin{proof}
Claim \eqref{prop5.1} follows directly from Proposition~\ref{prop1}\eqref{prop1.2}. Note that for $d\in \mathtt{B}_n$
we have $\langle d\rangle=\omega d\omega$. Hence claim  \eqref{prop5.2} follows from \eqref{prop5.1} and
Proposition~\ref{prop1}\eqref{prop1.1}. Finally, claim  \eqref{prop5.3} follows from \eqref{prop5.2}
and Proposition~\ref{prop1}\eqref{prop1.2}.
\end{proof}

The second direct summand $\mathfrak{R}_n^1$ in the decomposition \eqref{eq652}
can now be described by the following statement:

\begin{proposition}\label{prop6}
Assume $\delta',(\delta-1)\in\Bbbk^{\times}$, then the functor 
\begin{equation}\label{eqf2}
\mathrm{F}_{\omega}:=\mathrm{Hom}_{\mathfrak{R}_n}(\mathfrak{R}_n\omega,{}_{-}): \mathfrak{R}_n\text{-}\mathrm{mod}
\to \omega\mathfrak{R}_n\omega\text{-}\mathrm{mod}
\end{equation}
induces an equivalence $\mathfrak{R}_n^1\text{-}\mathrm{mod}\cong \mathfrak{B}_{n-1}(\delta-1)\text{-}\mathrm{mod}$.
\end{proposition}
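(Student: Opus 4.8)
The plan is to mirror the proof of Proposition~\ref{prop4} for the odd summand. By Proposition~\ref{prop5}, $\omega\mathfrak{R}_n\omega\cong\mathfrak{B}_{n-1}(\delta-1)$, so by the standard idempotent-recollement result of \cite[Section~5]{Au} the functor $\mathrm{F}_{\omega}=\mathrm{Hom}_{\mathfrak{R}_n}(\mathfrak{R}_n\omega,{}_{-})$ restricts to an equivalence between the full subcategory $\mathcal{X}_{\omega}$ of $\mathfrak{R}_n\text{-}\mathrm{mod}$ consisting of modules admitting a presentation by objects of $\mathrm{add}(\mathfrak{R}_n\omega)$ and $\omega\mathfrak{R}_n\omega\text{-}\mathrm{mod}\cong\mathfrak{B}_{n-1}(\delta-1)\text{-}\mathrm{mod}$. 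So everything reduces to identifying $\mathcal{X}_{\omega}$ with $\mathfrak{R}_n^1\text{-}\mathrm{mod}$, and for this it suffices to show that the trace ideal $\mathfrak{R}_n\omega\mathfrak{R}_n$ equals the odd summand $\mathfrak{R}_n^1$.

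First I would check the easy inclusion: since $\omega=\omega_{\{1\}}$ involves $u_{n,1}=u\otimes u^{\star}\otimes 1^{\otimes(n-1)}$, which has propagating number $n-1$ (hence $n-\#^p$ odd), we have $\omega\in\mathfrak{R}_n^1$, and as $\mathfrak{R}_n^1$ is a two-sided ideal of $\mathfrak{R}_n$ (being a block direct summand by Proposition~\ref{prop2}) it follows that $\mathfrak{R}_n\omega\mathfrak{R}_n\subseteq\mathfrak{R}_n^1$. For the reverse inclusion I would show $\mathfrak{R}_n\omega\mathfrak{R}_n$ contains $1_o$, and, just as in Proposition~\ref{prop4}, since the ideal is stable under left and right multiplication by $S_n$, it is enough to produce each $\omega_{\{1,2,\dots,2k+1\}}$ for $2k+1\le n$ inside the ideal. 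This is done by the same trick: consider $x:=(vu)^{\star}\otimes(u\otimes u\otimes(vu)^{\star})^{\otimes k}\otimes 1^{\otimes(n-2k-1)}$ (so that $x$ starts with the singleton-producing block feeding into position $1$ and then $k$ copies of the block used in Proposition~\ref{prop4}), and verify by a direct diagram calculation with Proposition~\ref{prop1} that
\[
(\delta-1)^{k}\,\omega_{\{1,2,\dots,2k+1\}}=\langle x\rangle\,\omega\,\langle x^{\star}\rangle,
\]
which lies in $\mathfrak{R}_n\omega\mathfrak{R}_n$; inverting $(\delta-1)^{k}$ gives $\omega_{\{1,2,\dots,2k+1\}}\in\mathfrak{R}_n\omega\mathfrak{R}_n$. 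Summing over odd $X$ with the coefficients $1/{\delta'}^{|X|}$ as in Proposition~\ref{prop2}\eqref{prop2.2} yields $1_o\in\mathfrak{R}_n\omega\mathfrak{R}_n$, so $\mathfrak{R}_n\omega\mathfrak{R}_n=\mathfrak{R}_n^1$.

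The main obstacle is the diagram bookkeeping in the identity $(\delta-1)^{k}\omega_{\{1,2,\dots,2k+1\}}=\langle x\rangle\omega\langle x^{\star}\rangle$: one has to check that concatenating $\langle x\rangle$, $\omega$, $\langle x^{\star}\rangle$ produces exactly $k$ closed loops decorated so as to contribute the factor $(\delta-1)^{k}$ via Proposition~\ref{prop1}\eqref{prop1.2}, that no singleton ever meets a pair part at an equator (so the product is not killed), that the surviving diagram is $u_{\{1,\dots,2k+1\}}$, and that the decorations match those in $\omega_{\{1,\dots,2k+1\}}$. This is a straightforward but fiddly verification, essentially the $n-2k-1$ analogue of Figure~\ref{fig11} with one extra singleton strand at level~$1$; once it is in place the rest of the argument is formal. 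Finally, since $\mathrm{F}_{\omega}$ restricted to $\mathfrak{R}_n^1\text{-}\mathrm{mod}=\mathcal{X}_{\omega}$ is an equivalence onto $\mathfrak{B}_{n-1}(\delta-1)\text{-}\mathrm{mod}$, the proposition follows, and combined with Proposition~\ref{prop4} and the decomposition \eqref{eq652} it completes the proof of Theorem~\ref{thm1}.
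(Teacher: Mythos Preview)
Your overall strategy is exactly the paper's: reduce via Auslander's result and Proposition~\ref{prop5} to showing $\mathfrak{R}_n\omega\mathfrak{R}_n=\mathfrak{R}_n^1$, then obtain each $\omega_{\{1,\dots,2k+1\}}$ inside the ideal by sandwiching $\omega$ between $\langle x\rangle$ and $\langle x^{\star}\rangle$ for a suitable $x$.

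There is, however, a genuine slip in your choice of $x$. Your element
\[
x=(vu)^{\star}\otimes(u\otimes u\otimes(vu)^{\star})^{\otimes k}\otimes 1^{\otimes(n-2k-1)}
\]
is not an element of $\mathfrak{R}_n$ at all: $(vu)^{\star}$ is a morphism from $0$ to $2$, so your $x$ has $n-1$ vertices on the left and $n+1$ on the right. The paper's choice is $x=(u\otimes u^{\star})\otimes(u\otimes u\otimes(vu)^{\star})^{\otimes k}\otimes 1^{\otimes(n-2k-1)}$, which does lie in $\mathtt{R}_n$ and has singletons $\{1\}$ and $\{1'\}$, matching those of $\omega$. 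With this $x$, at each of the two equators the singleton at level~$1$ meets a singleton (not a pair part), so the product is nonzero by Proposition~\ref{prop1}\eqref{prop1.2}; but each such meeting contributes an open string and hence a factor $\delta'$. The correct identity is therefore
\[
{\delta'}^{2}(\delta-1)^{k}\,\omega_{\{1,\dots,2k+1\}}=\langle x\rangle\,\omega\,\langle x^{\star}\rangle,
\]
and one must invert both $\delta'$ and $\delta-1$ (both hypotheses are used). Once this is fixed, your argument coincides with the paper's.
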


\begin{proof}
By \cite[Section~5]{Au}, $\mathrm{F}_{\omega}$ induces an equivalence between the full subcategory $\mathcal{X}$ of 
$\mathfrak{R}_n\text{-}\mathrm{mod}$, consisting of modules $M$ having a presentation $X_1\to X_0\to M\to 0$
with $X_1,X_0\in \mathrm{add}(\mathfrak{R}_n\omega)$, and $\omega\mathfrak{R}_n\omega\text{-}\mathrm{mod}$.
By Proposition~\ref{prop5}, the category $\omega\mathfrak{R}_n\omega\text{-}\mathrm{mod}$ is equivalent to 
$\mathfrak{B}_{n-1}(\delta-1)\text{-}\mathrm{mod}$. So, to complete the proof we have just to show that 
$\mathcal{X}\cong \mathfrak{R}_n^1\text{-}\mathrm{mod}$. For this it is enough to show that the trace 
$\mathfrak{R}_n\omega\mathfrak{R}_n$ of $\mathfrak{R}_n\omega$ in $\mathfrak{R}_n$ coincides with 
$\mathfrak{R}_n^1$. As $\omega\in \mathfrak{R}_n^1$, we have $\mathfrak{R}_n\omega\mathfrak{R}_n\subset
\mathfrak{R}_n^1$. 

As $\mathfrak{R}_n\omega\mathfrak{R}_n$ is an ideal, it is left to show that it contains $1_o$. For this
it is enough to show that $\mathfrak{R}_n\omega\mathfrak{R}_n$ contains $\omega_X$ for each 
$X\subset\underline{n}$ such that $|X|$ is odd (see Proposition~\ref{prop2}\eqref{prop2.2}). 
As $\mathfrak{R}_n\omega\mathfrak{R}_n$ is stable under 
multiplication with the elements of the symmetric group, it is enough to show that 
$\mathfrak{R}_n\omega\mathfrak{R}_n$ contains $\omega_{\{1,2,\dots,2k+1\}}$ for each $k$ such that 
$2k+1\leq n$. Consider the element $x:=(u\otimes u^{\star})\otimes
(u\otimes u\otimes (vu)^{\star})^{\otimes k}\otimes 1^{\otimes (n-2k-1)}$.
Then a direct calculation using Proposition~\ref{prop1} shows that
\begin{displaymath}
{\delta'}^2(\delta-1)^k\omega_{\{1,2,\dots,2k+1\}}= \langle x\rangle\omega\langle x^{\star}\rangle
\end{displaymath}
(see illustration in Figure~\ref{fig12}) and the claim follows inverting $\delta'$ and $\delta-1$.
\end{proof}

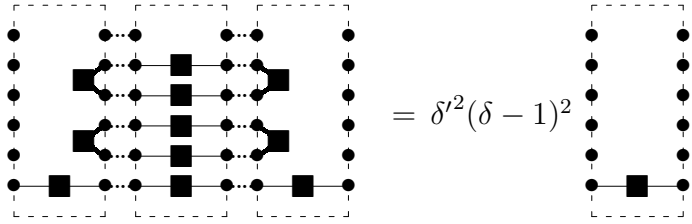
\begin{figure}
\special{em:linewidth 0.4pt} \unitlength 0.80mm
\begin{picture}(125,45)
\put(05,10){\makebox(0,0)[cc]{$\bullet$}}
\put(05,15){\makebox(0,0)[cc]{$\bullet$}}
\put(05,20){\makebox(0,0)[cc]{$\bullet$}}
\put(05,25){\makebox(0,0)[cc]{$\bullet$}}
\put(05,30){\makebox(0,0)[cc]{$\bullet$}}
\put(05,35){\makebox(0,0)[cc]{$\bullet$}}
\put(20,10){\makebox(0,0)[cc]{$\bullet$}}
\put(20,15){\makebox(0,0)[cc]{$\bullet$}}
\put(20,20){\makebox(0,0)[cc]{$\bullet$}}
\put(20,25){\makebox(0,0)[cc]{$\bullet$}}
\put(20,30){\makebox(0,0)[cc]{$\bullet$}}
\put(20,35){\makebox(0,0)[cc]{$\bullet$}}
\put(25,10){\makebox(0,0)[cc]{$\bullet$}}
\put(25,15){\makebox(0,0)[cc]{$\bullet$}}
\put(25,20){\makebox(0,0)[cc]{$\bullet$}}
\put(25,25){\makebox(0,0)[cc]{$\bullet$}}
\put(25,30){\makebox(0,0)[cc]{$\bullet$}}
\put(25,35){\makebox(0,0)[cc]{$\bullet$}}
\put(40,10){\makebox(0,0)[cc]{$\bullet$}}
\put(40,15){\makebox(0,0)[cc]{$\bullet$}}
\put(40,20){\makebox(0,0)[cc]{$\bullet$}}
\put(40,25){\makebox(0,0)[cc]{$\bullet$}}
\put(40,30){\makebox(0,0)[cc]{$\bullet$}}
\put(40,35){\makebox(0,0)[cc]{$\bullet$}}
\put(45,10){\makebox(0,0)[cc]{$\bullet$}}
\put(45,15){\makebox(0,0)[cc]{$\bullet$}}
\put(45,20){\makebox(0,0)[cc]{$\bullet$}}
\put(45,25){\makebox(0,0)[cc]{$\bullet$}}
\put(45,30){\makebox(0,0)[cc]{$\bullet$}}
\put(45,35){\makebox(0,0)[cc]{$\bullet$}}
\put(60,10){\makebox(0,0)[cc]{$\bullet$}}
\put(60,15){\makebox(0,0)[cc]{$\bullet$}}
\put(60,20){\makebox(0,0)[cc]{$\bullet$}}
\put(60,25){\makebox(0,0)[cc]{$\bullet$}}
\put(60,30){\makebox(0,0)[cc]{$\bullet$}}
\put(60,35){\makebox(0,0)[cc]{$\bullet$}}
\put(100,10){\makebox(0,0)[cc]{$\bullet$}}
\put(100,15){\makebox(0,0)[cc]{$\bullet$}}
\put(100,20){\makebox(0,0)[cc]{$\bullet$}}
\put(100,25){\makebox(0,0)[cc]{$\bullet$}}
\put(100,30){\makebox(0,0)[cc]{$\bullet$}}
\put(100,35){\makebox(0,0)[cc]{$\bullet$}}
\put(115,10){\makebox(0,0)[cc]{$\bullet$}}
\put(115,15){\makebox(0,0)[cc]{$\bullet$}}
\put(115,20){\makebox(0,0)[cc]{$\bullet$}}
\put(115,25){\makebox(0,0)[cc]{$\bullet$}}
\put(115,30){\makebox(0,0)[cc]{$\bullet$}}
\put(115,35){\makebox(0,0)[cc]{$\bullet$}}
\dashline{1}(05,05)(05,40)
\dashline{1}(20,40)(05,40)
\dashline{1}(20,40)(20,05)
\dashline{1}(05,05)(20,05)
\dashline{1}(25,05)(25,40)
\dashline{1}(40,40)(25,40)
\dashline{1}(40,40)(40,05)
\dashline{1}(25,05)(40,05)
\dashline{1}(45,05)(45,40)
\dashline{1}(60,40)(45,40)
\dashline{1}(60,40)(60,05)
\dashline{1}(45,05)(60,05)
\dashline{1}(100,05)(100,40)
\dashline{1}(115,40)(100,40)
\dashline{1}(115,40)(115,05)
\dashline{1}(100,05)(115,05)
\dottedline[.]{1}(20.00,10.00)(25.00,10.00)
\dottedline[.]{1}(20.00,15.00)(25.00,15.00)
\dottedline[.]{1}(20.00,20.00)(25.00,20.00)
\dottedline[.]{1}(20.00,25.00)(25.00,25.00)
\dottedline[.]{1}(20.00,30.00)(25.00,30.00)
\dottedline[.]{1}(20.00,35.00)(25.00,35.00)
\dottedline[.]{1}(40.00,10.00)(45.00,10.00)
\dottedline[.]{1}(40.00,15.00)(45.00,15.00)
\dottedline[.]{1}(40.00,20.00)(45.00,20.00)
\dottedline[.]{1}(40.00,25.00)(45.00,25.00)
\dottedline[.]{1}(40.00,30.00)(45.00,30.00)
\dottedline[.]{1}(40.00,35.00)(45.00,35.00)
\drawline(05,10)(20,10)
\drawline(25,10)(40,10)
\drawline(45,10)(60,10)
\drawline(100,10)(115,10)
\drawline(25,15)(40,15)
\drawline(25,20)(40,20)
\drawline(25,25)(40,25)
\drawline(25,30)(40,30)
\linethickness{1pt}
\qbezier(20,15)(15,17.50)(20,20)
\qbezier(20,25)(15,27.50)(20,30)
\qbezier(45,15)(50,17.50)(45,20)
\qbezier(45,25)(50,27.50)(45,30)
\put(82,22){\makebox(0,0)[cc]{$=\,{\delta'}^2(\delta-1)^2$}}
\put(12.50,10){\makebox(0,0)[cc]{$\blacksquare$}}
\put(32.50,10){\makebox(0,0)[cc]{$\blacksquare$}}
\put(52.50,10){\makebox(0,0)[cc]{$\blacksquare$}}
\put(107.50,10){\makebox(0,0)[cc]{$\blacksquare$}}
\put(32.50,15){\makebox(0,0)[cc]{$\blacksquare$}}
\put(32.50,20){\makebox(0,0)[cc]{$\blacksquare$}}
\put(32.50,25){\makebox(0,0)[cc]{$\blacksquare$}}
\put(32.50,30){\makebox(0,0)[cc]{$\blacksquare$}}
\put(16.50,17.50){\makebox(0,0)[cc]{$\blacksquare$}}
\put(16.50,27.50){\makebox(0,0)[cc]{$\blacksquare$}}
\put(48.50,17.50){\makebox(0,0)[cc]{$\blacksquare$}}
\put(48.50,27.50){\makebox(0,0)[cc]{$\blacksquare$}}
\end{picture}
\caption{Calculation in the proof of Proposition~\ref{prop6}, $n=6$, $k=2$.}\label{fig12} 
\end{figure}

Theorem~\ref{thm1} now follows combining Propositions~\ref{prop2}, \ref{prop4} and \ref{prop6}.

\subsection{Application: the Cartan matrix for $\mathfrak{R}_n$ over $\mathbb{C}$}\label{s2.4}

Recall that if $A,B$ are Morita equivalent finite dimensional algebras over a field, then there is a bijection between 
the sets of equivalence classes of simple modules; and this induces an identification of Cartan decomposition 
matrices. For each  Brauer algebra over $\mathbb{C}$ the corresponding Cartan decomposition matrix $C$ is determined 
in  \cite{Martin08-9} (using heavy machinery such as \cite{CoxDevisscherMartin0509}). Thus Theorem~\ref{thm1}
determines the Cartan decomposition matrix, over $\mathbb{C}$, for partial Brauer algebras in case
$\delta'\neq 0$ and $\delta\neq 1$.
Similarly, it is well-known that Brauer algebra over $\mathbb{C}$ are generically semi-simple (see e.g.
\cite{Brown55,CoxDevisscherMartin0509}). Hence Theorem~\ref{thm1} implies:

\begin{corollary}\label{cor7}
The complex algebra $\mathfrak{R}_n(\delta,\delta')$ is generically semisimple.
\end{corollary}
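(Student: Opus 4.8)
The plan is to read off the statement directly from Theorem~\ref{thm1} together with the classical fact that Brauer algebras over $\mathbb{C}$ are generically semisimple. First I would recall (see e.g.\ \cite{Brown55,CoxDevisscherMartin0509}) that for each fixed $m\in\mathbb{N}_0$ there is a finite subset $T_m\subset\mathbb{C}$ such that the Brauer algebra $\mathfrak{B}_m(\mu)$ is semisimple for every $\mu\in\mathbb{C}\setminus T_m$; in fact $T_m$ may be taken to consist of finitely many integers, and this is also visible from the fact that the Brauer decomposition matrices of \cite{Martin08-9} are trivial (the identity) for all but finitely many values of the parameter. Put $T:=T_n\cup T_{n-1}$, which is again a finite set.

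Next I would invoke Theorem~\ref{thm1}: whenever $\delta'\neq 0$ and $\delta\neq 1$, so that $\delta',\delta-1\in\mathbb{C}^{\times}$, we have the Morita equivalence $\mathfrak{R}_n(\delta,\delta')\equiv\mathfrak{B}_n(\delta-1)\oplus\mathfrak{B}_{n-1}(\delta-1)$. Since semisimplicity is a Morita invariant for finite dimensional algebras over a field, and a finite direct product of such algebras is semisimple if and only if each factor is, it follows that $\mathfrak{R}_n(\delta,\delta')$ is semisimple as soon as $\delta'\neq 0$, $\delta\neq 1$ and $\delta-1\notin T$.

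Finally I would note that the set of pairs $(\delta,\delta')$ excluded above, namely $\{\delta'=0\}\cup\{\delta=1\}\cup\bigcup_{t\in T}\{\delta=1+t\}$, is a finite union of lines in $\mathbb{C}^2$, hence a proper Zariski-closed subset; equivalently, there is a nonzero polynomial in $\delta,\delta'$ whose nonvanishing guarantees semisimplicity of $\mathfrak{R}_n(\delta,\delta')$. This is precisely the assertion that $\mathfrak{R}_n(\delta,\delta')$ is generically semisimple. There is essentially no obstacle in this argument; the only point deserving a word of care is the passage to the Brauer side, i.e.\ checking that the finitely many bad parameter values for $\mathfrak{B}_n$ and for $\mathfrak{B}_{n-1}$ can be absorbed into a single finite set $T$, which is immediate, and that the exceptional set for $\mathfrak{R}_n$ thereby remains a proper closed subvariety rather than growing to something dense.
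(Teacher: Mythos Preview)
Your proof is correct and follows essentially the same approach as the paper: both deduce the result from Theorem~\ref{thm1} combined with the well-known generic semisimplicity of Brauer algebras over $\mathbb{C}$ (citing \cite{Brown55,CoxDevisscherMartin0509}). Your version simply spells out in more detail the Morita-invariance of semisimplicity and the Zariski-closed nature of the exceptional locus, points the paper leaves implicit.
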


However, knowledge of the Cartan matrix does not lead directly to constructions for simple or indecomposable projective 
modules, or simple characters. To facilitate this for partial Brauer algebras it is convenient to introduce an 
intermediate class of modules (the so-called {\em Specht modules} or {\em Brauer-Specht modules}) 
with a concrete construction, and to tie these also 
to the Brauer algebra case. In \cite{Martin08-9} the Cartan decomposition matrix is determined in the framework of 
a splitting $\pi$-modular system (in the sense of Brauer's modular representation theory, although the prime $\pi$ here 
is a linear monic, not a prime number). That is, the Brauer-Specht module decomposition matrix $D$, which gives the 
simple content of `modular' reductions of the lifts of generic (i.e. in this case $\delta$-indeterminate) simple 
modules, is determined. The Morita equivalence gives a correspondence between Brauer-Specht  modules for the 
partial Brauer algebra and those for the Brauer algebra. 
Thus it only remains to cast the partial Brauer algebras 
in the same framework, construct their Brauer-Specht modules and show
that they 
are preserved by the Morita equivalence.

\section{Specht modules for $\mathfrak{R}_n$}\label{s3}

\subsection{Construction of Specht modules}\label{s3.1}

Specht modules for (integral) partial Brauer algebras, which descend to a complete set of simple modules over complex 
numbers for generic values of parameters, are constructed exactly as for the partition algebra \cite{Martin94}. 
In this section, aiming to reach out to readers in the semigroup community, 
we (show how to) cast this construction in terms of the 
beautiful machinery of semigroup 
representation theory (see e.g. \cite[Section~11]{GM} or \cite{GMS}).

Define the equivalence relation $\sim_L$ on $\mathtt{R}_n$ as follows: $d\sim_L d'$ if and only for any $X\subset\underline{n}'$ the set $X$ is a part in $d$ if and only if it is a part in $d'$. Let $\mathcal{L}$ be an equivalence class for $\sim_L$ (such class is called a {\em left cell}). Let $\mathbf{C}_{\mathcal{L}}$ be the free
$\Bbbk$-submodule of $\mathfrak{R}_n$ with basis $\mathcal{L}$. We turn 
$\mathbf{C}_{\mathcal{L}}$ into an $\mathfrak{R}_n$-module
by setting, for every $d\in\mathtt{R}_n$ and $d'\in\mathcal{L}$,
\begin{displaymath}
d\cdot d'=\begin{cases}
dd',& dd'\in \mathbf{C}_{\mathcal{L}};\\
0,& \text{otherwise}.
\end{cases}
\end{displaymath}

If $\mathcal{L}$ is a left cell, then all elements in $\mathcal{L}$ have the same propagating number, which we
will call the {\em propagating number} of $\mathcal{L}$.

\begin{proposition}\label{prop11}
Let $\Bbbk=\mathbb{Z}[\delta,\delta',\frac{1}{\delta'}]$.
Let $\mathcal{L}$ be a left cell with propagating number $k\in\mathbb{N}_0$.
\begin{enumerate}[$($i$)$]
\item\label{prop11.1} Let $\mathcal{L}'$ be 
another left cells with the same propagating number, then 
$\mathbf{C}_{\mathcal{L}}\cong \mathbf{C}_{\mathcal{L}'}$
as  $\mathfrak{R}_n$-modules.
\item\label{prop11.2} The endomorphism ring of 
$\mathbf{C}_{\mathcal{L}}$ is isomorphic to $\Bbbk[S_k]$.
\item\label{prop11.3} $\mathbf{C}_{\mathcal{L}}$ is free over its endomorphism ring.
\end{enumerate}
\end{proposition}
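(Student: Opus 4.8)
The plan is to prove the three claims in sequence, exploiting the semigroup-theoretic structure of the diagram basis exactly as one does for the partition algebra.

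First I would set up notation for left cells carefully. A left cell $\mathcal{L}$ is determined by the ``right-hand data'' of a diagram: the partition of $\underline{n}'$ it induces (which parts of $\underline{n}'$ are singletons, which are paired among themselves, and which are the right-ends of propagating lines), together with the matching of propagating line right-ends to the abstract set $\underline{k}$ of propagating ``channels.'' Concretely, fixing a left cell with propagating number $k$ amounts to fixing: a subset $S \subseteq \underline{n}'$ of non-singleton vertices, a pairing of $S \setminus (\text{right-ends})$, and the $k$ right-ends ordered; an element $d \in \mathcal{L}$ is then any partial Brauer diagram with this right-hand data, and the remaining freedom is entirely on the left, namely an arbitrary choice of $k$ left-ends for the propagating lines (in order) plus an arbitrary partial Brauer pairing/singleton structure on the rest of $\underline{n}$. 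For \eqref{prop11.1}, given two left cells $\mathcal{L}, \mathcal{L}'$ of the same propagating number $k$, pick $e \in \mathcal{L}'$ sharing its left-hand data with some chosen $d_0 \in \mathcal{L}$ (possible since ``left-hand data'' ranges over the same set for both cells), and observe that right multiplication $x \mapsto x \cdot e$ — more precisely the map sending a basis diagram $d \in \mathcal{L}$ to the unique diagram in $\mathcal{L}'$ with the same left-hand data — is an $\mathfrak{R}_n$-module isomorphism: one checks that for any generator diagram $f$, $f d$ and $f e$ have the ``same'' left-hand data whenever either is nonzero, and the straightening scalars agree because, working over $\Bbbk = \mathbb{Z}[\delta,\delta',\tfrac{1}{\delta'}]$ with $\delta' $ invertible, no closed loops or open strings of the forbidden kind can appear when multiplying into a fixed left cell (propagating number is preserved, not dropped). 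This is the one place requiring a small case analysis of the straightening rule, but it is routine.

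For \eqref{prop11.2}, an endomorphism $\psi$ of $\mathbf{C}_{\mathcal{L}}$ is determined by where it sends any one basis element, and $\mathfrak{R}_n$-equivariance forces it to ``act on the left-ends.'' Concretely, fix $d_0 \in \mathcal{L}$; the stabiliser-type subgroup acting is the copy of $S_k$ sitting inside $\mathfrak{R}_n$ (the maximal-rank diagrams) permuting the $k$ propagating channels of $\mathcal{L}$, and one shows $\psi$ is right-multiplication by a well-defined element of $\Bbbk[S_k]$ (lifted to the diagram algebra via the abstract channel set). The key computation is that any diagram $d \in \mathcal{L}$ whose propagating lines connect the channels to left-ends in a non-order-preserving way is obtained from $d_0$ by acting on the left by a unique permutation of channels, and conversely; hence $\mathrm{End}_{\mathfrak{R}_n}(\mathbf{C}_{\mathcal{L}}) \cong \Bbbk[S_k]$. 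For \eqref{prop11.3}, with this description $\mathbf{C}_{\mathcal{L}}$ decomposes as a free $\Bbbk[S_k]$-module: group the basis $\mathcal{L}$ into orbits under the $S_k$-action (permuting channels at the propagating left-ends); since that action is free — two distinct channel-permutations give distinct left orderings of propagating left-ends, hence distinct diagrams — each orbit is a free rank-one $\Bbbk[S_k]$-module, and a set of orbit representatives (one diagram per choice of ``underlying unordered left-hand data'') is a $\Bbbk[S_k]$-basis of $\mathbf{C}_{\mathcal{L}}$.

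The main obstacle I anticipate is \eqref{prop11.1}: one must verify that the candidate module isomorphism is genuinely $\mathfrak{R}_n$-equivariant, i.e. that the ``same left-hand data'' bijection between $\mathcal{L}$ and $\mathcal{L}'$ commutes with the $\mathfrak{R}_n$-action \emph{including the straightening scalars}. The subtlety is that when a generator $f$ acts on $d \in \mathcal{L}$, the product $fd$ is either $0$ (if it leaves the cell, e.g. drops propagating number or creates a singleton-meets-pair-part at the equator) or a scalar $(\delta-1)^a {\delta'}^b$ times a diagram in $\mathcal{L}$; I would argue that whether it is zero, and the exponents $a, b$, depend only on $f$ and the right-hand data, hence are identical for the corresponding $e \in \mathcal{L}'$, using Proposition~\ref{prop1}\eqref{prop1.2}. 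Once this is pinned down, \eqref{prop11.2} and \eqref{prop11.3} are essentially bookkeeping about the free $S_k$-action on left-ends, entirely parallel to the partition algebra case in \cite{Martin94}.
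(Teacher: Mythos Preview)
Your overall strategy is sound and the three parts hang together, but your route differs from the paper's in a way worth noting, and there are a couple of genuine slips.

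\textbf{Comparison with the paper.} For \eqref{prop11.1} the paper does \emph{not} use the ``same left-hand data'' bijection you describe. Instead it normalises each cell (via $S_n$-conjugation) so that it contains an explicit idempotent $\varepsilon'_{(a,b,k)} = \frac{1}{{\delta'}^{a}}\varepsilon_{(a,b,k)}$, and then produces an explicit element $\varphi_{(a,c)}$ such that right multiplication by $\varphi_{(a,c)}$ carries $\mathbf{C}_{\mathcal{L}}$ isomorphically onto $\mathbf{C}_{\mathcal{L}'}$. Your bijection approach is in fact what the paper uses \emph{later}, in Lemma~\ref{lem501}, to handle the $\delta'=0$ case; it has the advantage of not needing $\delta'$ invertible at all. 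For \eqref{prop11.2} the paper again exploits the idempotent: since $\varepsilon'_{(a,b,k)}$ generates $\mathbf{C}_{\mathcal{L}}$, one gets $\mathrm{End}_{\mathfrak{R}_n}(\mathbf{C}_{\mathcal{L}}) \cong \varepsilon'_{(a,b,k)}\mathbf{C}_{\mathcal{L}}$, and this space has as basis the $\mathcal{H}$-class of $\varepsilon_{(a,b,k)}$, which is a group isomorphic to $S_k$ by \cite{Mazorchuk95}. Your version of \eqref{prop11.2} (``an endomorphism is determined by where it sends one element, and equivariance forces\ldots'') is really just a restatement of what needs to be proved; the idempotent argument is what makes it go through cleanly, and is where the hypothesis $\frac{1}{\delta'}\in\Bbbk$ is actually used. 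For \eqref{prop11.3} you and the paper agree: the paper's $\Bbbk[S_k]$-basis is the set of diagrams in $\mathcal{L}$ whose propagating lines can be drawn without crossings, which is exactly your set of orbit representatives.

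\textbf{Two slips to fix.} First, in your discussion of \eqref{prop11.1} you write that the straightening scalars ``depend only on $f$ and the right-hand data''. This is backwards: the right-hand data is precisely what distinguishes $\mathcal{L}$ from $\mathcal{L}'$, so if the scalars depended on it you would be in trouble. You mean the \emph{left}-hand data (which $d\in\mathcal{L}$ and its partner in $\mathcal{L}'$ share), and indeed the equator of $f|d$ involves only the right of $f$ and the left of $d$. Second, the scalar in the ordinary diagram basis is $\delta^{a}{\delta'}^{b}$, not $(\delta-1)^{a}{\delta'}^{b}$; the latter arises only in the decorated basis of Proposition~\ref{prop1}\eqref{prop1.2}, which is not the basis used to define $\mathbf{C}_{\mathcal{L}}$. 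Your invocation of Proposition~\ref{prop1}\eqref{prop1.2} is therefore misplaced.
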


\begin{proof}
Assume first that $k\in\mathbb{N}$.
The symmetric group $S_n$, given by partitions consisting only of propagating lines, acts on $\mathfrak{R}_n$ via automorphisms by conjugation. Using this action, without loss of generality we may assume that $\mathcal{L}$ contains the
element $\varepsilon_{(a,b,k)}$ for some $a,b\in\mathbb{N}_0$
such that $a+2b+k=n$, defined as follows:
\begin{displaymath}
\varepsilon_{(a,b,k)}=
(u\otimes u^{\star})^{\otimes a}\otimes
((vu)^{\star})^{\otimes b}\otimes 1\otimes
(vu)^{\otimes b}\otimes 1^{\otimes k-1}.
\end{displaymath}
The element $\varepsilon_{(2,2,2)}$ is shown in Figure~\ref{fig21}.
\begin{figure}
 \special{em:linewidth 0.4pt} \unitlength 0.80mm
\begin{picture}(90,55)
\put(05,10){\makebox(0,0)[cc]{$\bullet$}}
\put(05,15){\makebox(0,0)[cc]{$\bullet$}}
\put(05,20){\makebox(0,0)[cc]{$\bullet$}}
\put(05,25){\makebox(0,0)[cc]{$\bullet$}}
\put(05,30){\makebox(0,0)[cc]{$\bullet$}}
\put(05,35){\makebox(0,0)[cc]{$\bullet$}}
\put(05,40){\makebox(0,0)[cc]{$\bullet$}}
\put(05,45){\makebox(0,0)[cc]{$\bullet$}}
\put(25,10){\makebox(0,0)[cc]{$\bullet$}}
\put(25,15){\makebox(0,0)[cc]{$\bullet$}}
\put(25,20){\makebox(0,0)[cc]{$\bullet$}}
\put(25,25){\makebox(0,0)[cc]{$\bullet$}}
\put(25,30){\makebox(0,0)[cc]{$\bullet$}}
\put(25,35){\makebox(0,0)[cc]{$\bullet$}}
\put(25,40){\makebox(0,0)[cc]{$\bullet$}}
\put(25,45){\makebox(0,0)[cc]{$\bullet$}}
\put(35,10){\makebox(0,0)[cc]{$\bullet$}}
\put(35,15){\makebox(0,0)[cc]{$\bullet$}}
\put(35,20){\makebox(0,0)[cc]{$\bullet$}}
\put(35,25){\makebox(0,0)[cc]{$\bullet$}}
\put(35,30){\makebox(0,0)[cc]{$\bullet$}}
\put(35,35){\makebox(0,0)[cc]{$\bullet$}}
\put(35,40){\makebox(0,0)[cc]{$\bullet$}}
\put(35,45){\makebox(0,0)[cc]{$\bullet$}}
\put(55,10){\makebox(0,0)[cc]{$\bullet$}}
\put(55,15){\makebox(0,0)[cc]{$\bullet$}}
\put(55,20){\makebox(0,0)[cc]{$\bullet$}}
\put(55,25){\makebox(0,0)[cc]{$\bullet$}}
\put(55,30){\makebox(0,0)[cc]{$\bullet$}}
\put(55,35){\makebox(0,0)[cc]{$\bullet$}}
\put(55,40){\makebox(0,0)[cc]{$\bullet$}}
\put(55,45){\makebox(0,0)[cc]{$\bullet$}}
\put(65,10){\makebox(0,0)[cc]{$\bullet$}}
\put(65,15){\makebox(0,0)[cc]{$\bullet$}}
\put(65,20){\makebox(0,0)[cc]{$\bullet$}}
\put(65,25){\makebox(0,0)[cc]{$\bullet$}}
\put(65,30){\makebox(0,0)[cc]{$\bullet$}}
\put(65,35){\makebox(0,0)[cc]{$\bullet$}}
\put(65,40){\makebox(0,0)[cc]{$\bullet$}}
\put(65,45){\makebox(0,0)[cc]{$\bullet$}}
\put(85,10){\makebox(0,0)[cc]{$\bullet$}}
\put(85,15){\makebox(0,0)[cc]{$\bullet$}}
\put(85,20){\makebox(0,0)[cc]{$\bullet$}}
\put(85,25){\makebox(0,0)[cc]{$\bullet$}}
\put(85,30){\makebox(0,0)[cc]{$\bullet$}}
\put(85,35){\makebox(0,0)[cc]{$\bullet$}}
\put(85,40){\makebox(0,0)[cc]{$\bullet$}}
\put(85,45){\makebox(0,0)[cc]{$\bullet$}}
\dashline{1}(05,05)(05,50)
\dashline{1}(25,50)(05,50)
\dashline{1}(25,50)(25,05)
\dashline{1}(05,05)(25,05)
\dashline{1}(35,05)(35,50)
\dashline{1}(55,50)(35,50)
\dashline{1}(55,50)(55,05)
\dashline{1}(35,05)(55,05)
\dashline{1}(65,05)(65,50)
\dashline{1}(85,50)(65,50)
\dashline{1}(85,50)(85,05)
\dashline{1}(65,05)(85,05)
\drawline(05,10)(25,10)
\drawline(05,35)(25,15)
\drawline(65,10)(85,10)
\drawline(65,25)(85,15)
\drawline(35,10)(55,10)
\drawline(35,15)(55,15)
\drawline(35,20)(55,20)
\drawline(35,35)(55,25)
\linethickness{1pt}
\qbezier(05,15)(10,17.50)(05,20)
\qbezier(05,25)(10,27.50)(05,30)
\qbezier(25,20)(20,22.50)(25,25)
\qbezier(25,35)(20,32.50)(25,30)
\qbezier(35,25)(40,27.50)(35,30)
\qbezier(65,15)(70,17.50)(65,20)
\qbezier(85,20)(80,22.50)(85,25)
\end{picture}
\caption{The elements $\varepsilon_{(2,2,2)}$,
$\varphi_{(2,4)}$ and $\varepsilon_{(4,1,2)}$}\label{fig21} 
\end{figure}
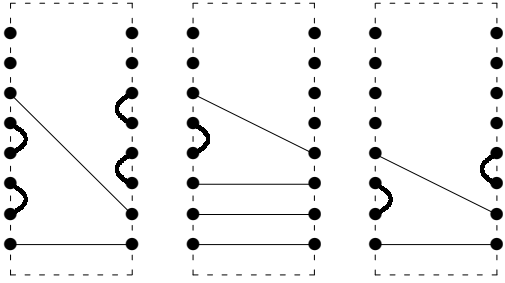
It is easy to check that the element 
$\varepsilon'_{(a,b,k)}:=\frac{1}{{\delta'}^a}\varepsilon_{(a,b,k)}$ 
is an idempotent
and that it generates $\mathbf{C}_{\mathcal{L}}$.

Let $\mathcal{H}$ denote the set of diagrams $d\in\mathcal{L}$
satisfying $d^\star\sim_L \varepsilon_{(a,b,k)}^\star$.
From the previous paragraph it follows that the endomorphism ring of $\mathbf{C}_{\mathcal{L}}$ is isomorphic to 
$\varepsilon'_{(a,b,k)}\mathbf{C}_{\mathcal{L}}$. The latter
is a free $\Bbbk$-module with basis $\{\frac{1}{{\delta'}^a}d:d\in \mathcal{H}\}$. From \cite[Theorem~1]{Mazorchuk95}
it follows that the elements of this basis form a group
isomorphic to $S_k$. This proves claim \eqref{prop11.2}.
Claim \eqref{prop11.3} follows from claim \eqref{prop11.2} by a direct calculation if we, for example, take, as a basis
of $\mathbf{C}_{\mathcal{L}}$ over $\Bbbk[S_n]$, the set
of all diagrams in $\mathcal{L}$ in which propagating lines
can be drawn such that they do not cross each other.

To prove claim \eqref{prop11.1}, without loss of generality
we may assume that $\mathbf{C}_{\mathcal{L}'}$ is generated
by $\varepsilon'_{(c,d,k)}$ and $c>a$. Then for the
element
\begin{displaymath}
\varphi_{(a,c)}:= (u\otimes u^{\star})^{\otimes a}
\otimes (u^{\star})^{\otimes c-a}\otimes 1 \otimes
(vu)^{\otimes c-a}\otimes 1^{\otimes k+2(c-a)-1}
\end{displaymath}
(see Figure~\ref{fig21})
we have $\varepsilon'_{(a,b,k)}\varphi_{(a,c)}=
\frac{1}{{\delta'}^a}\varepsilon'_{(c,d,k)}$, which implies
that the right multiplication with $\varphi_{(a,c)}$
induces the desired isomorphism between
$\mathbf{C}_{\mathcal{L}}$ and $\mathbf{C}_{\mathcal{L}'}$.

If $k=0$, we let $\varepsilon_{(a,b,0)}$ be the element in $\mathcal{L}$ in which
$\underline{n}$ is split into singletons and set
$\varepsilon'_{(a,b,0)}=\frac{1}{{\delta'}^{a+b}}\varepsilon_{(a,b,0)}$ and
$\varphi_{(a,c)}=\varepsilon_{(c,d,0)}$. Using these elements
the rest of the proof is similar to the above.
\end{proof}

For any partition $\lambda\vdash k$ let $\mathcal{S}_{\mathbb{Z}}(\lambda)$ denote the Specht module
for $\mathbb{Z}[S_k]$ corresponding to $\lambda$ and let
$\mathcal{S}_{\Bbbk}(\lambda)$ denote the corresponding Specht module
for $\Bbbk[S_k]$ (with  scalars induced). We define the 
{\em integral Specht module} $\Delta_{\Bbbk}(\lambda)$
for the algebra $\mathfrak{R}_n$ as follows:
\begin{displaymath}
\Delta_{\Bbbk}(\lambda):=
\mathbf{C}_{\mathcal{L}}\otimes_{\Bbbk[S_k]}
\mathcal{S}_{\Bbbk}(\lambda),
\end{displaymath}
where $\mathcal{L}$ is some fixed left cell with propagating
number $k$. Here the right action of $\Bbbk[S_k]$ on
$\mathbf{C}_{\mathcal{L}}$ is given by 
Proposition~\ref{prop11}\eqref{prop11.2}.
By Proposition~\ref{prop11}\eqref{prop11.3}, this definition does not depend on the choice of $\mathcal{L}$, up to isomorphism.

The intersection of a left cell $\mathcal{L}$ with
$\mathtt{B}_n$ gives a {\em left cell} $\mathcal{L}^{\mathfrak{B}}$ for $\mathfrak{B}_n$
(note that $\mathcal{L}^{\mathfrak{B}}\neq\varnothing$
implies that the propagating number of $\mathcal{L}$
has the same parity as $n$). Similarly to the above
we define modules $\mathbf{C}_{\mathcal{L}^{\mathfrak{B}}}$
and the {\em integral Specht module} 
$\Delta_{\Bbbk}^{\mathfrak{B}}(n,\lambda)$
for the algebra $\mathfrak{B}_n$ as follows:
\begin{displaymath}
\Delta_{\Bbbk}^{\mathfrak{B}}(n,\lambda):=
\mathbf{C}_{\mathcal{L}^{\mathfrak{B}}}\otimes_{\Bbbk[S_k]}
\mathcal{S}_{\Bbbk}(\lambda),
\end{displaymath}
using the obvious analogue of Proposition~\ref{prop11}
for the algebra $\mathfrak{B}_n$. If $n$ is clear from the context, we will sometimes write
simply $\Delta_{\Bbbk}^{\mathfrak{B}}(\lambda)$ for 
$\Delta_{\Bbbk}^{\mathfrak{B}}(n,\lambda)$.

\subsection{Specht modules and Morita equivalence}\label{s3.2}

Our first important observation about Specht modules is:

\begin{proposition}\label{prop12}
Assume $\delta'\in\Bbbk^{\times}$. Then both the functor $\mathrm{F}$ from Proposition~\ref{prop4} 
and the functor $\mathrm{F}_{\omega}$ from Proposition~\ref{prop6}
map Specht modules to Specht modules.
\end{proposition}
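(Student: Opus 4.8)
The plan is to track explicitly what the Morita equivalence functors do to the cell modules $\mathbf{C}_{\mathcal{L}}$, and then to deduce the statement for Specht modules by tensoring over the group algebra. Recall from Proposition~\ref{prop4} that $\mathrm{F}=\hat{u}_n\mathfrak{R}_n\otimes_{\mathfrak{R}_n}{}_{-}$, so $\mathrm{F}(\mathbf{C}_{\mathcal{L}})=\hat{u}_n\mathbf{C}_{\mathcal{L}}$, which is the $\Bbbk$-span of the set $\{\hat{u}_n d : d\in\mathcal{L}\}$ inside $\hat{u}_n\mathfrak{R}_n$, viewed as a module over $\hat{u}_n\mathfrak{R}_n\hat{u}_n\cong\mathfrak{B}_n(\delta-1)$. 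The first step is to identify this image concretely. By Proposition~\ref{prop1}\eqref{prop1.2}, $\hat{u}_n d = \langle\text{something}\rangle$ is nonzero (up to a unit scalar, since $\delta-1$ and $\delta'$ are units) precisely when $d$ has no singletons on the left, i.e. when $l(d)=\underline{n}$; otherwise decorating forces a singleton to meet a pair part at the equator of $\langle 1_n\rangle|d$ when $\#^p(d)<n$ and the left defect is odd, or more directly: $\hat{u}_n d$ collapses the left-hand singletons of $d$. Concretely $\hat u_n d$ should be identified with a decorated Brauer-type diagram whose right-hand side still records the $\sim_L$-class data, and one checks that as $d$ ranges over $\mathcal{L}$ the images $\hat u_n d$ span a left cell module $\mathbf{C}_{\mathcal{L}^{\mathfrak{B}}}$ for $\mathfrak{B}_n(\delta-1)$ (possibly twisted by a diagram automorphism, which is harmless). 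So the concrete claim is $\mathrm{F}(\mathbf{C}_{\mathcal{L}})\cong\mathbf{C}_{\mathcal{L}^{\mathfrak{B}}}$ as $\mathfrak{B}_n(\delta-1)$-modules, and the analogous statement $\mathrm{F}_\omega(\mathbf{C}_{\mathcal{L}})\cong\mathbf{C}_{\mathcal{L}^{\mathfrak{B}}}$ for $\mathfrak{B}_{n-1}(\delta-1)$, using $\mathrm{F}_\omega=\omega\mathfrak{R}_n\otimes_{\mathfrak{R}_n}{}_{-}$ together with Proposition~\ref{prop5} and the map $\varphi$.

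Next I would check compatibility with the right $\Bbbk[S_k]$-action. The cell module $\mathbf{C}_{\mathcal{L}}$ carries its $S_k$-action via Proposition~\ref{prop11}\eqref{prop11.2}, realised by right multiplication within $\mathfrak{R}_n$ by the diagrams in $\mathcal{H}$; since $\mathrm{F}$ acts by left multiplication by $\hat u_n$ (i.e. applying $\hat u_n\mathfrak{R}_n\otimes_{\mathfrak{R}_n}-$), it commutes with this right action, and one verifies that under the identification $\hat u_n\mathfrak{R}_n\hat u_n\cong\mathfrak{B}_n(\delta-1)$ the induced right $S_k$-action on $\mathbf{C}_{\mathcal{L}^{\mathfrak{B}}}$ is exactly the one used to define $\Delta^{\mathfrak{B}}_{\Bbbk}(n,\lambda)$. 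This is where one uses that the symmetric group $S_k$ sitting inside a left cell of $\mathfrak{R}_n$ maps isomorphically (via $\gamma$ and the decoration calculus) onto the corresponding copy of $S_k$ inside the Brauer algebra — essentially Proposition~\ref{prop5.3} / Proposition~\ref{prop3.3} restricted to diagrams of maximal propagating number within the cell. Once the isomorphism $\mathrm{F}(\mathbf{C}_{\mathcal{L}})\cong\mathbf{C}_{\mathcal{L}^{\mathfrak{B}}}$ is established as an isomorphism of $(\mathfrak{B}_n(\delta-1),\Bbbk[S_k])$-bimodules, the conclusion is immediate:
\[
\mathrm{F}(\Delta_{\Bbbk}(\lambda)) = \mathrm{F}\bigl(\mathbf{C}_{\mathcal{L}}\otimes_{\Bbbk[S_k]}\mathcal{S}_{\Bbbk}(\lambda)\bigr) \cong \mathbf{C}_{\mathcal{L}^{\mathfrak{B}}}\otimes_{\Bbbk[S_k]}\mathcal{S}_{\Bbbk}(\lambda) = \Delta^{\mathfrak{B}}_{\Bbbk}(n,\lambda),
\]
using that $\mathrm{F}$, being (isomorphic to) $\hat u_n\mathfrak{R}_n\otimes_{\mathfrak{R}_n}-$, is right exact and commutes with the tensor product over $\Bbbk[S_k]$ which acts on the left-tensor-factor only; and similarly $\mathrm{F}_\omega(\Delta_{\Bbbk}(\lambda))\cong\Delta^{\mathfrak{B}}_{\Bbbk}(n-1,\lambda)$.

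The main obstacle I expect is the bookkeeping in the first step: showing precisely that $\hat u_n\mathbf{C}_{\mathcal{L}}$ (and $\omega\mathbf{C}_{\mathcal{L}}$) is a \emph{cell} module for the Brauer algebra and not merely some module with the right composition factors. One has to show the image is free of the expected rank over $\Bbbk$ (i.e. that the correct diagrams $\hat u_n d$ remain linearly independent and that exactly the ones that ought to vanish do vanish), and match up the cell data — the $\sim_L$-classes in $\mathtt{R}_n$ restricting to $\sim_L$-classes in $\mathtt{B}_n$, as already noted in the paragraph preceding Proposition~\ref{prop12}. The cases $k$ of the wrong parity relative to $n$ (where $\mathcal{L}^{\mathfrak{B}}=\varnothing$ for $\mathfrak{B}_n$ but not for $\mathfrak{B}_{n-1}$, and vice versa) need a remark: such a cell module lies entirely in $\mathfrak{R}_n^1\text{-}\mathrm{mod}$ (resp.\ $\mathfrak{R}_n^0\text{-}\mathrm{mod}$), so one of the two functors kills it and the other produces the Specht module over the appropriately-ranked Brauer algebra — this is consistent because $\Delta_{\Bbbk}(\lambda)$ with $\lambda\vdash k$ lives in the even or odd summand according to the parity of $n-k$. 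A clean way to organise the whole argument is to fix the standard representative $\varepsilon_{(a,b,k)}$ of $\mathcal{L}$ from the proof of Proposition~\ref{prop11} and compute $\hat u_n\varepsilon_{(a,b,k)}$ and $\omega\varepsilon_{(a,b,k)}$ directly with the decorated-diagram calculus of Proposition~\ref{prop1}, observing that $\hat u_n$ absorbs the $(u\otimes u^{\star})^{\otimes a}$ and $((vu)^{\star})^{\otimes b}$ factors up to the unit $(\delta-1)^{\bullet}{\delta'}^{\bullet}$, landing on exactly the standard cell-module generator for the Brauer algebra.
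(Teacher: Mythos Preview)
Your approach is essentially the paper's: reduce to showing $\mathrm{F}(\mathbf{C}_{\mathcal{L}})\cong\mathbf{C}_{\mathcal{L}^{\mathfrak{B}}}$ by identifying $\mathrm{F}(M)$ with $\hat u_n M$ and determining which $\hat u_n d$ survive, then pass to Specht modules. The paper is terser---it folds the $S_k$-compatibility and the tensoring step into ``clearly, it is enough to show''---but the strategy is identical.

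One correction worth making: you repeatedly invoke $\delta-1\in\Bbbk^{\times}$, but the proposition assumes only $\delta'\in\Bbbk^{\times}$, and in fact no factor of $\delta-1$ appears. The key computation is cleaner than your first paragraph suggests. If $d\in\mathcal{L}$ has a left singleton $\{i\}$, then $u_{n,i}d=\delta'\, d$ (the two singletons at level $i$ form an open string), so $\hat u_{n,i}d=d-\frac{1}{\delta'}u_{n,i}d=0$ and hence $\hat u_n d=0$. If $d$ has no left singletons then, since $\mathcal{L}$ was chosen with $\mathcal{L}^{\mathfrak{B}}\neq\varnothing$ (so no right singletons either), $d\in\mathcal{L}^{\mathfrak{B}}$ and $\hat u_n d=\langle d\rangle$ on the nose. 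No closed loops form in $\hat u_n\cdot d$ because $1_n$ has only propagating lines, so $\delta-1$ never enters. Your attempt to invoke Proposition~\ref{prop1}\eqref{prop1.2} directly (``a singleton meets a pair part at the equator of $\langle 1_n\rangle|d$'') is the wrong mechanism, since $d$ is not decorated; the paper says ``similarly to'' that proposition, meaning the elementary $\hat u_{n,i}$ calculation above. Likewise, in your final paragraph $\hat u_n\varepsilon_{(a,b,k)}=0$ whenever $a>0$, so the relevant representative is $\varepsilon_{(0,b,k)}$ (which exists exactly when $n-k$ is even), and then $\hat u_n\varepsilon_{(0,b,k)}=\langle\varepsilon_{(0,b,k)}\rangle$ with no scalar.
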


\begin{proof}
We prove the claim for the functor $\mathrm{F}$ (for
$\mathrm{F}_{\omega}$ one can use similar arguments). Choose
$\mathcal{L}$ such that $\mathcal{L}^{\mathfrak{B}}\neq
\varnothing$. Clearly, it is enough to show that 
$\mathrm{F}$ maps $\mathbf{C}_{\mathcal{L}}$ to
$\mathbf{C}_{\mathcal{L}^{\mathfrak{B}}}$. From the
definition of $\mathrm{F}$ it follows that for any module $M$
we have $\mathrm{F}\, M=\hat{u}_n M$ (as a $\Bbbk$-vector space). 
Similarly to Proposition~\ref{prop1}\eqref{prop1.2} one shows that 
for $d\in \mathcal{L}$ we have $\hat{u}_nd\neq 0$ if and only if 
$d\in \mathcal{L}^{\mathfrak{B}}$. This implies that 
$\mathrm{F}\,\mathbf{C}_{\mathcal{L}}\cong
\mathbf{C}_{\mathcal{L}^{\mathfrak{B}}}$ as
a $\Bbbk$-vector space. It is straightforward to check that
this isomorphism intertwines the actions of 
$\mathfrak{R}_n$ and $\mathfrak{B}_n$. The claim follows.
\end{proof}

\begin{remark}\label{rem1305}
{\rm  
Simple characters for the complex algebra $\mathfrak{R}_n(\delta,\delta')$ 
in the case $\delta',\delta-1\in\mathbb{C}^{\times}$ can be obtained combining
Theorem~\ref{thm1}, Proposition~\ref{prop12} and \cite{Martin08-9}. Indeed,
Theorem~\ref{thm1} and \cite{Martin08-9} determine the decomposition matrix for
Specht modules while Proposition~\ref{prop12} and the combinatorial construction 
of Specht modules for $\mathfrak{R}_n(\delta,\delta')$ give us dimensions of
Specht modules.
}
\end{remark}

\begin{corollary}\label{cor14}
Let $\Bbbk$ be an algebraically closed field. Assume that the characteristics
of $\Bbbk$ does not divide  $n!$ and that 
$\delta'  \neq 0$. 
Then the $\Bbbk$-algebra $\mathfrak{R}_n$ 
is quasi-hereditary with Specht modules
$\Delta_{\Bbbk}(\lambda)$ being standard.
\end{corollary}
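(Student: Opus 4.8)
The plan is to deduce quasi-heredity of $\mathfrak{R}_n$ from the quasi-heredity of Brauer algebras together with the Morita equivalence of Theorem~\ref{thm1}, and then to identify the standard modules on both sides using Proposition~\ref{prop12}. First I would recall that under the stated hypotheses (char $\Bbbk \nmid n!$, $\delta' \neq 0$) the algebra $\mathfrak{R}_n$ decomposes, by Proposition~\ref{prop2}, as $\mathfrak{R}_n^0 \oplus \mathfrak{R}_n^1$, and being quasi-hereditary is a property that respects such a block-type direct sum decomposition. By Propositions~\ref{prop4} and~\ref{prop6}, $\mathfrak{R}_n^0 \equiv \mathfrak{B}_n(\delta-1)$ and $\mathfrak{R}_n^1 \equiv \mathfrak{B}_{n-1}(\delta-1)$ as $\Bbbk$-algebras. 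The point to check is that the hypothesis $\delta' \neq 0$ suffices to run these Morita equivalences over $\Bbbk$ — which it does, since Propositions~\ref{prop4} and~\ref{prop6} only require $\delta', \delta-1 \in \Bbbk^{\times}$; but Corollary~\ref{cor14} does \emph{not} assume $\delta - 1 \neq 0$. So before anything else I must address the case $\delta = 1$.

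Resolving the $\delta = 1$ point is where the real care is needed: when $\delta - 1 = 0$ the idempotent $\hat{u}_n$ still makes sense (it only needs $\delta'$ invertible), and Proposition~\ref{prop3} still identifies $\hat{u}_n\mathfrak{R}_n\hat{u}_n$ with $\mathfrak{B}_n(\delta-1) = \mathfrak{B}_n(0)$; what fails is the argument in Proposition~\ref{prop4} that the trace ideal $\mathfrak{R}_n\hat{u}_n\mathfrak{R}_n$ is all of $\mathfrak{R}_n^0$, since that step divides by $(\delta-1)^k$. Thus for $\delta = 1$ one needs the `mild variation of Theorem~\ref{thm1}' promised in the introduction for Section~\ref{s4}; granting that variation (or, alternatively, invoking that over an algebraically closed field with char $\nmid n!$ the generic semisimplicity of Corollary~\ref{cor7} handles the semisimple locus and one treats $\delta=1$ separately via Section~\ref{s4}), one still obtains that each summand $\mathfrak{R}_n^i$ is Morita equivalent to a Brauer algebra $\mathfrak{B}_m(\delta-1)$ with $m \in \{n-1, n\}$. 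I would therefore phrase the proof so that it cites Theorem~\ref{thm1} when $\delta \neq 1$ and the Section~\ref{s4} analogue when $\delta = 1$, uniformly concluding $\mathfrak{R}_n \equiv \mathfrak{B}_n(\delta-1) \oplus \mathfrak{B}_{n-1}(\delta-1)$.

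Next I would invoke the known structure theory of the Brauer algebra: over an algebraically closed field $\Bbbk$ with $\operatorname{char}\Bbbk \nmid n!$, the Brauer algebra $\mathfrak{B}_m(\delta-1)$ is quasi-hereditary with standard modules the Brauer–Specht modules $\Delta_{\Bbbk}^{\mathfrak{B}}(m,\lambda)$ (the cell-module filtration of $\mathfrak{B}_m$ by propagating number realises the heredity chain; this is classical, see \cite{Martin08-9} and the references therein, and requires exactly that the symmetric group algebras $\Bbbk[S_k]$ occurring as the `layer' algebras be semisimple, i.e.\ $\operatorname{char}\Bbbk \nmid k!$ for all $k \le m \le n$). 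Since quasi-heredity is preserved by Morita equivalence and the equivalence carries standard modules to standard modules, each $\mathfrak{R}_n^i$ is quasi-hereditary. Finally, Proposition~\ref{prop12} says the equivalence functors $\mathrm{F}$ and $\mathrm{F}_\omega$ send the modules $\Delta_{\Bbbk}(\lambda)$ to the $\Delta_{\Bbbk}^{\mathfrak{B}}(\lambda)$; combined with the count that the set of $\Delta_{\Bbbk}(\lambda)$ over all $\lambda \vdash k$, $k \equiv n \bmod 2$ (for $\mathfrak{R}_n^0$) resp.\ $k \not\equiv n \bmod 2$ (for $\mathfrak{R}_n^1$) matches, under the equivalence, a full set of standard $\mathfrak{B}_n(\delta-1)$- resp.\ $\mathfrak{B}_{n-1}(\delta-1)$-modules, we conclude that $\mathfrak{R}_n$ is quasi-hereditary with the $\Delta_{\Bbbk}(\lambda)$ as its standard modules. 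The main obstacle, as indicated, is the $\delta=1$ case, which must be routed through the Section~\ref{s4} variant of Theorem~\ref{thm1} rather than Theorem~\ref{thm1} itself; everything else is a formal consequence of Morita invariance plus the cited Brauer-algebra results.
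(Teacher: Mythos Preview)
Your approach via Morita transfer is genuinely different from the paper's, and it has a real gap at $\delta=1$. The paper does \emph{not} invoke Theorem~\ref{thm1} or any Morita equivalence here; it verifies quasi-heredity directly from the cell structure: the regular module has a filtration by the cell modules $\mathbf{C}_{\mathcal{L}}$ and hence by Specht modules (Proposition~\ref{prop11}\eqref{prop11.1}); under $\delta'\neq 0$ and $\mathrm{char}\,\Bbbk\nmid n!$ each $\Delta_\Bbbk(\lambda)$ has endomorphism ring $\Bbbk$ (inherited from $\mathcal{S}_\Bbbk(\lambda)$); and the composition factors of $\Delta_\Bbbk(\lambda)$ respect the partial order by propagating number, by the semigroup argument of \cite[Theorem~7]{GMS}. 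This is uniform in $\delta$.

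Your route fails at $\delta=1$ for two independent reasons. First, the ``mild variation'' of Section~\ref{s4} is \emph{not} a full Morita equivalence: Proposition~\ref{prop521} only gives $\mathfrak{R}_n^0\equiv\mathfrak{B}_n(0)$ for odd $n$ and $\mathfrak{R}_n^1\equiv\mathfrak{B}_{n-1}(0)$ for even $n$; the paper states explicitly in Subsection~\ref{s4.2} that for even $n$ the categories $\mathfrak{R}_n^0\text{-}\mathrm{mod}$ and $\mathfrak{B}_n(0)\text{-}\mathrm{mod}$ are \emph{not} equivalent (the former has an extra simple, indexed by $\varnothing$). Second, even if you had such an equivalence, the Brauer algebra $\mathfrak{B}_n(0)$ is \emph{not} quasi-hereditary for even $n$, so there would be nothing to transfer; the paper itself flags this contrast (``$\mathfrak{R}_n$ remains quasihereditary for all $\delta$, while $\mathfrak{B}_n$ does not''). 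Thus your argument is circular at exactly the point where it is needed most. For $\delta\neq 1$ your strategy is sound, but it does not cover the statement as written.
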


\begin{proof}
As $\mathtt{R}_n$ is a disjoint union of cells, it
follows from Proposition~\ref{prop11}\eqref{prop11.1}
and 
the definition of $\Delta_{\Bbbk}(\lambda)$  
that the regular representation of 
$\mathfrak{R}_n$ has a filtration with quotients 
isomorphic to Specht modules. 

If $\delta'  \neq 0$
the endomorphism ring of
$\Delta_{\Bbbk}(\lambda)$ is isomorphic to the endomorphism
ring of $\mathcal{S}_{\Bbbk}(\lambda)$ and hence
coincides with $\Bbbk$.

Finally, for any linear order on the set of 
all partitions $\lambda\vdash k$, $k\leq n$, satisfying
$\lambda'<\lambda$ provided that  $\lambda\vdash k$,
$\lambda'\vdash k'$ and $k<k'$, using the arguments
from the proof of \cite[Theorem~7]{GMS}, one shows
that all composition subquotients of 
$\Delta_{\Bbbk}(\lambda)$ are indexed by 
$\lambda'$ such that $\lambda'<\lambda$. The claim follows.
\end{proof}

\begin{corollary}\label{cor15}
Specht modules for the generic $\mathbb{C}$-algebra 
$\mathfrak{R}_n$ are semisimple.
\end{corollary}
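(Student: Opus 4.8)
The plan is to deduce Corollary~\ref{cor15} directly from the structural results already established, using the well-known fact that Specht modules for the Brauer algebra $\mathfrak{B}_n(\delta)$ over $\mathbb{C}$ are semisimple for generic values of the parameter (see e.g. \cite{Brown55,CoxDevisscherMartin0509,Martin08-9}). The key point is that the Morita equivalence of Theorem~\ref{thm1} is realised concretely, via Propositions~\ref{prop4} and~\ref{prop6}, by the functors $\mathrm{F}$ and $\mathrm{F}_{\omega}$, and by Proposition~\ref{prop12} these functors carry Specht modules for $\mathfrak{R}_n$ to Specht modules for $\mathfrak{B}_n(\delta-1)$ and $\mathfrak{B}_{n-1}(\delta-1)$, respectively.

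First I would observe that for generic $(\delta,\delta')$ we have in particular $\delta'\in\mathbb{C}^{\times}$ and $\delta-1\in\mathbb{C}^{\times}$, so Theorem~\ref{thm1} and all of Propositions~\ref{prop2}, \ref{prop4}, \ref{prop6} and~\ref{prop12} apply. The parity decomposition $\mathfrak{R}_n\text{-}\mathrm{mod}\cong\mathfrak{R}_n^0\text{-}\mathrm{mod}\oplus\mathfrak{R}_n^1\text{-}\mathrm{mod}$ of \eqref{eq652} splits every $\mathfrak{R}_n$-module, and every Specht module $\Delta_{\mathbb{C}}(\lambda)$ lies in one of the two summands according to the parity of $n-k$, where $\lambda\vdash k$. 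On the even summand, $\mathrm{F}$ gives an equivalence $\mathfrak{R}_n^0\text{-}\mathrm{mod}\cong\mathfrak{B}_n(\delta-1)\text{-}\mathrm{mod}$ which by Proposition~\ref{prop12} sends $\Delta_{\mathbb{C}}(\lambda)$ to the Brauer-Specht module $\Delta_{\mathbb{C}}^{\mathfrak{B}}(n,\lambda)$; on the odd summand, $\mathrm{F}_{\omega}$ gives $\mathfrak{R}_n^1\text{-}\mathrm{mod}\cong\mathfrak{B}_{n-1}(\delta-1)\text{-}\mathrm{mod}$ sending $\Delta_{\mathbb{C}}(\lambda)$ to $\Delta_{\mathbb{C}}^{\mathfrak{B}}(n-1,\lambda)$.

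Now, an equivalence of module categories preserves semisimplicity of objects, so it suffices to know that $\Delta_{\mathbb{C}}^{\mathfrak{B}}(m,\lambda)$ is semisimple for $m=n$ and $m=n-1$ whenever the relevant parameter is generic. For the Brauer algebra this is classical: at a generic value of the parameter the algebra itself is semisimple (hence every module, in particular every Brauer-Specht module, is semisimple), and ``generic'' here means avoiding a certain countable set of bad values; intersecting the finitely many conditions coming from $\mathfrak{B}_n(\delta-1)$ and $\mathfrak{B}_{n-1}(\delta-1)$ (together with $\delta'\neq0$ and $\delta\neq1$) still leaves a generic set of pairs $(\delta,\delta')$. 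Pulling back through the two equivalences, we conclude that each $\Delta_{\mathbb{C}}(\lambda)$ is semisimple, which is the assertion.

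The only mildly delicate point—and hence the ``obstacle''—is bookkeeping about what ``generic'' means: one should make precise that the finitely many polynomial conditions on $(\delta,\delta')$ required (the two Brauer-algebra genericity conditions, now expressed in $\delta-1$, plus $\delta'\neq0$, $\delta\neq1$) are simultaneously satisfiable on a Zariski-dense set, so that the phrase ``the generic $\mathbb{C}$-algebra $\mathfrak{R}_n$'' is consistent with Corollary~\ref{cor7}. Once this is granted, the argument is immediate from Theorem~\ref{thm1} and Proposition~\ref{prop12}. (Alternatively, one may bypass the Specht-module tracking entirely: by Corollary~\ref{cor7} the algebra $\mathfrak{R}_n$ is itself semisimple for generic parameters, and over a semisimple algebra every module—in particular every Specht module—is semisimple; this gives the cleanest proof.)
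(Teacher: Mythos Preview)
Your proof is correct and follows essentially the same approach as the paper: invoke the known generic semisimplicity of Brauer-Specht modules and transfer it to $\mathfrak{R}_n$ via Theorem~\ref{thm1} and Proposition~\ref{prop12}. The paper's alternative route is slightly different from yours (it suggests constructing a contravariant form on the Specht module and showing it is generically non-degenerate, rather than appealing to Corollary~\ref{cor7}), but your main argument matches the paper's main argument.
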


\begin{proof}
It is known that Specht modules for the $\mathbb{C}$-algebra  $\mathfrak{B}_n$ are generically semisimple (see 
e.g. \cite{Brown55,CoxDevisscherMartin0509}). 
Now the claim follows from Proposition~\ref{prop12} and 
Theorem~\ref{thm1}. 
Alternatively one can proceed by constructing  
a contravariant form on the Specht module,
and show that it is generically non-degenerate.
\end{proof}

\subsection{The branching rule for Specht modules}\label{s3.3}

For a left cell $\mathcal{L}$ denote by $\mathcal{L}_{nc}$ the subset of $\mathcal{L}$ consisting of all diagrams
which can be drawn such that the propagating lines do not cross each other (inside the diagram). For
$\lambda\vdash k$ choose a basis  $\mathtt{b}_{\lambda}=\{b_1,b_2,\dots, b_{k_{\lambda}}\}$ of 
$\mathcal{S}_{\Bbbk}(\lambda)$. From the proof of Proposition~\ref{prop11} it then follows that the set 
\begin{displaymath}
\mathtt{B}_{\mathcal{L},\mathtt{b}_{\lambda}}:=
\{d\otimes b\, \colon \, d\in\mathcal{L}_{nc}, b\in \mathtt{b}_{\lambda} \} 
\end{displaymath}
forms a basis of $\Delta_{\Bbbk}(n,\lambda)$.

The map $d\mapsto d\otimes 1$, $d\in \mathtt{R}_n$, extends uniquely to an injective algebra homomorphism 
$\mathfrak{i}=\mathfrak{i}_{n-1}:\mathfrak{R}_{n-1}\to \mathfrak{R}_n$. This induces the 
restriction map $\mathfrak{i}^*:\mathfrak{R}_n\text{-}\mathrm{mod}\to \mathfrak{R}_{n-1}\text{-}\mathrm{mod}$.
The aim of this subsection is to establish the branching rule for Specht modules with respect to this restriction.
Consider the partition
\begin{displaymath}
\mathtt{B}_{\mathcal{L},\mathtt{b}_{\lambda}}=
\mathtt{B}_{\mathcal{L},\mathtt{b}_{\lambda}}^{(1)}\cup
\mathtt{B}_{\mathcal{L},\mathtt{b}_{\lambda}}^{(2)}\cup
\mathtt{B}_{\mathcal{L},\mathtt{b}_{\lambda}}^{(3)},
\end{displaymath}
where 
\begin{itemize}
\item $\mathtt{B}_{\mathcal{L},\mathtt{b}_{\lambda}}^{(1)}$ consists of all $d\otimes b$ such that 
$\{n\}$ is a singleton for $d$;
\item $\mathtt{B}_{\mathcal{L},\mathtt{b}_{\lambda}}^{(2)}$ consists of all 
$d\otimes b$ such that $n\in\underline{n}$ belongs to a propagating line in $d$;
\item $\mathtt{B}_{\mathcal{L},\mathtt{b}_{\lambda}}^{(3)}$ consists of all $d\otimes b$ such that 
$n\in\underline{n}$ belongs  to a non-propagating pair part in $d$.
\end{itemize}
For $i=1,2,3$ denote by $\Delta_{\Bbbk}^{(i)}(n,\lambda)$
the $\Bbbk$-linear span of $\mathtt{B}_{\mathcal{L},\mathtt{b}_{\lambda}}^{(i)}$. We will analyze each
$\Delta_{\Bbbk}^{(i)}(n,\lambda)$ separately, starting with $\Delta_{\Bbbk}^{(1)}(n,\lambda)$.

\begin{lemma}\label{lem21}
The $\Bbbk$-module $\Delta_{\Bbbk}^{(1)}(n,\lambda)$ is invariant under the action of 
$\mathfrak{i}(\mathfrak{R}_{n-1})$ and is isomorphic to $\Delta_{\Bbbk}(n-1,\lambda)$
as an $\mathfrak{R}_{n-1}$-module.
\end{lemma}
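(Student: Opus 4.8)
The plan is to show directly that $\Delta_{\Bbbk}^{(1)}(n,\lambda)$ is a subcomodule in the obvious way and then to identify it combinatorially with $\Delta_{\Bbbk}(n-1,\lambda)$. First I would observe that the basis set $\mathtt{B}_{\mathcal{L},\mathtt{b}_{\lambda}}^{(1)}$ consists of the elements $d\otimes b$ with $\{n\}$ a singleton of $d$; since $d\in\mathcal{L}_{nc}$, the node $n$ on the left edge of $d$ is isolated, so $d$ is really (the image under $\mathfrak{i}$ of) a diagram $\bar d\in\mathtt{R}_{n-1}$ together with a singleton at level $n$. More precisely, writing $d = \bar d\otimes u$ with $\bar d\in\mathtt{R}_{n-1}$, the map $d\otimes b\mapsto \bar d\otimes b$ is a bijection from $\mathtt{B}_{\mathcal{L},\mathtt{b}_{\lambda}}^{(1)}$ onto $\mathtt{B}_{\bar{\mathcal{L}},\mathtt{b}_{\lambda}}$, where $\bar{\mathcal{L}}$ is the left cell of $\mathtt{R}_{n-1}$ obtained by deleting the rightmost singleton $\{n'\}$ from the right-edge data of $\mathcal{L}$ (note $\bar{\mathcal{L}}$ has the same propagating number $k$, since $n$ was not propagating in any element of $\mathtt{B}_{\mathcal{L},\mathtt{b}_{\lambda}}^{(1)}$). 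This should give a $\Bbbk$-linear isomorphism $\Delta_{\Bbbk}^{(1)}(n,\lambda)\cong \mathbf{C}_{\bar{\mathcal{L}}}\otimes_{\Bbbk[S_k]}\mathcal{S}_{\Bbbk}(\lambda)=\Delta_{\Bbbk}(n-1,\lambda)$, using Proposition~\ref{prop11}\eqref{prop11.1} to see $\bar{\mathcal{L}}$ is a legitimate choice of left cell.

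Next I would verify that this $\Bbbk$-linear isomorphism intertwines the $\mathfrak{R}_{n-1}$-actions, i.e. that $\Delta_{\Bbbk}^{(1)}(n,\lambda)$ is invariant under $\mathfrak{i}(\mathfrak{R}_{n-1})$. The key point is that for $e\in\mathtt{R}_{n-1}$ the element $\mathfrak{i}(e)=e\otimes 1$ acts on a diagram $d=\bar d\otimes u$ by acting on $\bar d$ in the first $n-1$ strands and leaving the singleton $\{n\}$ untouched: concatenating $(e\otimes 1)$ with $(\bar d\otimes u)$ at the equator produces, at level $n$, a singleton meeting a singleton (the through-strand of $1$ at level $n$ in $e\otimes 1$ is propagating, but on the $d$-side level $n$ is a singleton $\{n\}$), so the level-$n$ part of the product is again a singleton; simultaneously the first $n-1$ levels produce $e\bar d$ up to the usual scalar. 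Hence $\mathfrak{i}(e)(d\otimes b)$ lies again in the span of $\mathtt{B}_{\mathcal{L},\mathtt{b}_{\lambda}}^{(1)}$ (or is zero, precisely when $e\bar d$ leaves the cell $\bar{\mathcal{L}}$), and the induced action matches the action of $e$ on $\mathbf{C}_{\bar{\mathcal{L}}}$, compatibly with the $\Bbbk[S_k]$-tensor factor. This is a routine but slightly fiddly diagram computation, essentially the same bookkeeping as in the proof of Proposition~\ref{prop2}\eqref{prop2.1}.

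The main obstacle I anticipate is being careful about the normalizations and the ``non-crossing'' convention used to fix the $\Bbbk[S_k]$-basis of $\mathbf{C}_{\mathcal{L}}$ over its endomorphism ring. One must check that the decomposition $d=\bar d\otimes u$ respects the choice of non-crossing representatives (deleting the rightmost left-edge singleton of a non-crossing diagram yields a non-crossing diagram, and conversely), so that the bijection $\mathcal{L}_{nc}\cap\{d:\{n\}\text{ singleton}\}\leftrightarrow \bar{\mathcal{L}}_{nc}$ is well-defined and compatible with the right $S_k$-action used to build the Specht module; equivalently, one checks that the idempotent $\varepsilon'_{(a,b,k)}$ of Proposition~\ref{prop11} for $\mathcal{L}$ (chosen with $\{n\}$ a singleton) restricts to the corresponding idempotent for $\bar{\mathcal{L}}$ after stripping the last tensor factor. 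Once this compatibility is pinned down, the isomorphism $\Delta_{\Bbbk}^{(1)}(n,\lambda)\cong\Delta_{\Bbbk}(n-1,\lambda)$ follows by applying $-\otimes_{\Bbbk[S_k]}\mathcal{S}_{\Bbbk}(\lambda)$ to the cell-module isomorphism, and the lemma is proved.
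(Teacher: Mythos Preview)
Your approach is essentially the same as the paper's: both observe that invariance is straightforward and then construct the isomorphism by deleting the left singleton $\{n\}$ together with a fixed singleton on the right edge of $\mathcal{L}$. The paper phrases this as ``without loss of generality assume $\mathcal{L}$ contains a singleton $\{s'\}$ in $\underline{n}'$, then delete $\{s'\}$ and $\{n\}$,'' whereas you specialise this singleton to be $\{n'\}$; either is fine, since Proposition~\ref{prop11}\eqref{prop11.1} lets one choose $\mathcal{L}$ freely (you should make explicit that you are exercising this freedom in selecting $\mathcal{L}$ so that $\{n'\}$ is indeed a singleton on its right edge). One small notational point: if $\bar d\in\mathtt{R}_{n-1}$, then the correct factorisation is $d=\bar d\otimes(u\otimes u^\star)$ rather than $d=\bar d\otimes u$, since $u$ by itself is a morphism $0\to 1$ in the category $\mathfrak{R}$ and not an element of $\mathfrak{R}_1$.
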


\begin{proof}
The invariance is straightforward. To prove the isomorphism, we first note that the propagating number $k$
of any $d\in \mathcal{L}_{nc}$ is strictly smaller than $n$ since $\{n\}$ is a singleton. Without loss of generality
we may assume that any $d\in \mathcal{L}_{nc}$ contains a singleton in $\underline{n}'$. Fix some such
singleton $\{s'\}$. Then, deleting $\{s'\}$ and $\{n\}$ from $d$ defines a bijective map from 
$\mathtt{B}_{\mathcal{L},\mathtt{b}_{\lambda}}^{(1)}$ to a basis in $\Delta_{\Bbbk}(n-1,\lambda)$,
moreover, the linearization of this map is compatible with the action of $\mathfrak{R}_{n-1}$ by construction. 
The claim follows.
\end{proof}

For two partitions $\mu\vdash m$ and $\nu\vdash m-1$ we write $\nu\to \mu$ provided that the Young diagram of
$\nu$ is obtained from the Young diagram for $\mu$ by removing a removable node (or, equivalently, the Young 
diagram of $\mu$ is obtained from the Young diagram for $\nu$ by inserting an insertable node), see e.g. 
\cite[2.8]{Sa}.

\begin{lemma}\label{lem22}
The $\Bbbk$-module $\Delta_{\Bbbk}^{(2)}(n,\lambda)$ is invariant under the action of 
$\mathfrak{i}(\mathfrak{R}_{n-1})$ and is isomorphic to 
\begin{displaymath}
\bigoplus_{\mu\to\lambda}\Delta_{\Bbbk}(n-1,\mu)
\end{displaymath}
as an $\mathfrak{R}_{n-1}$-module.
\end{lemma}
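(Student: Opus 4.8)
The plan is to mimic the proof of Lemma~\ref{lem21} but now keep track of how the symmetric group cell structure changes when a propagating line gets absorbed. First I would recall that, by the proof of Proposition~\ref{prop11}, we may assume $\mathcal{L}$ contains $\varepsilon_{(a,b,k)}$, so that a diagram $d\in\mathcal{L}_{nc}$ in which $n\in\underline{n}$ lies on a propagating line has exactly $k$ propagating lines none of which cross. The invariance of $\Delta_{\Bbbk}^{(2)}(n,\lambda)$ under $\mathfrak{i}(\mathfrak{R}_{n-1})$ is immediate from the diagram calculus: acting by $d'\otimes 1$ with $d'\in\mathtt{R}_{n-1}$ cannot turn the $n$-th vertex into a singleton or a non-propagating pair unless the product is already set to $0$ in $\mathbf{C}_{\mathcal{L}}$, because the $n$-th left vertex is not touched by $\mathfrak{i}(\mathfrak{R}_{n-1})$.

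For the isomorphism, the key point is that forgetting the $n$-th vertex of a diagram $d\in\mathtt{B}_{\mathcal{L},\mathtt{b}_\lambda}^{(2)}$ produces a diagram on $\mathbf{n-1}$ (on the left) and $\underline{n}'$ (on the right) with one fewer propagating line, i.e.\ with propagating number $k-1$; and this deletion is an $\mathfrak{R}_{n-1}$-module map onto the corresponding cell module $\mathbf{C}_{\mathcal{L}''}$ with propagating number $k-1$, tensored appropriately. The subtlety is that the tensor factor of the symmetric group module $\mathcal{S}_{\Bbbk}(\lambda)$ for $S_k$ must be re-expressed over $S_{k-1}$: the right action of $S_{k-1}\subset S_k$ (permuting the first $k-1$ propagating strands) on $\mathbf{C}_{\mathcal{L}}^{(2)}$-diagrams matches, under deletion, the right action of $S_{k-1}$ on $\mathbf{C}_{\mathcal{L}''}$. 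Hence as an $\mathfrak{R}_{n-1}$-module, $\Delta_{\Bbbk}^{(2)}(n,\lambda)\cong \mathbf{C}_{\mathcal{L}''}\otimes_{\Bbbk[S_{k-1}]}\bigl(\operatorname{Res}^{S_k}_{S_{k-1}}\mathcal{S}_{\Bbbk}(\lambda)\bigr)$.

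Now I would invoke the classical branching rule for symmetric group Specht modules: $\operatorname{Res}^{S_k}_{S_{k-1}}\mathcal{S}_{\Bbbk}(\lambda)\cong \bigoplus_{\mu\to\lambda}\mathcal{S}_{\Bbbk}(\mu)$ (valid over $\mathbb{Z}$, hence over our $\Bbbk=\mathbb{Z}[\delta,\delta',\tfrac{1}{\delta'}]$), see e.g.\ \cite[2.8]{Sa}. Since the functor $\mathbf{C}_{\mathcal{L}''}\otimes_{\Bbbk[S_{k-1}]}{}_{-}$ is additive and $\mathbf{C}_{\mathcal{L}''}$ is free over $\Bbbk[S_{k-1}]$ by Proposition~\ref{prop11}\eqref{prop11.3}, it commutes with this direct sum decomposition and with base change, yielding $\Delta_{\Bbbk}^{(2)}(n,\lambda)\cong\bigoplus_{\mu\to\lambda}\bigl(\mathbf{C}_{\mathcal{L}''}\otimes_{\Bbbk[S_{k-1}]}\mathcal{S}_{\Bbbk}(\mu)\bigr)=\bigoplus_{\mu\to\lambda}\Delta_{\Bbbk}(n-1,\mu)$, as required.

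The main obstacle I anticipate is the careful bookkeeping in the middle step: verifying that the deletion of the $n$-th vertex is well-defined on the whole cell module (not merely on the chosen non-crossing basis), that it genuinely intertwines the $\mathfrak{R}_{n-1}$-actions, and above all that the induced identification of right $S_k$-module structure with the restricted $S_{k-1}$-module structure is correct — in particular that there is no spurious scalar from $\delta'$ appearing when a propagating strand is truncated (here one should use that the propagating line through $n$ carries no $\delta'$-decoration, so the bijection is scalar-free). Everything else is a formal consequence of Proposition~\ref{prop11} together with the classical symmetric group branching rule.
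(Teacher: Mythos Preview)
Your proposal is correct and follows essentially the same approach as the paper: both arguments delete the propagating line through $n$, observe that this intertwines the $\mathfrak{R}_{n-1}$-actions and reduces the right $S_k$-structure to its $S_{k-1}$-restriction, and then invoke the classical branching rule $\operatorname{Res}^{S_k}_{S_{k-1}}\mathcal{S}_\Bbbk(\lambda)\cong\bigoplus_{\mu\to\lambda}\mathcal{S}_\Bbbk(\mu)$. The only cosmetic difference is that the paper fixes a basis $\mathtt{b}_\lambda$ compatible with the branching and exhibits the isomorphism as an explicit bijection of basis elements, whereas you phrase it functorially via $\mathbf{C}_{\mathcal{L}''}\otimes_{\Bbbk[S_{k-1}]}{}_{-}$; your anticipated ``obstacle'' about intertwining and scalar-freeness is exactly what the paper dismisses as straightforward.
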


\begin{proof}
The invariance is again straightforward. As $n$ belongs to a propagating line in every $d\in \mathcal{L}_{nc}$
and this line is not moved by any element from $\mathfrak{i}(\mathfrak{R}_{n-1})$, the restriction 
$\mathfrak{i}^*$ induces, for the right action of the symmetric group $S_k$ on $\Delta_{\Bbbk}(n,\lambda)$,
the restriction to a subgroup isomorphic to $S_{k-1}$. It is well know, see for example \cite[2.8]{Sa},  
that the branching rule for the effect of the latter restriction on Specht modules is as follows:
\begin{displaymath}
\mathcal{S}_{\Bbbk}(\lambda){\cong} 
\bigoplus_{\mu\to\lambda} \mathcal{S}_{\Bbbk}(\mu)
\end{displaymath}
(this is an isomorphism of ${S_{k-1}}$-modules).
Assume that the basis $\mathtt{b}_{\lambda}$ is chosen such that it is compatible with this branching. 
Then, deleting the propagating line containing $n$ from
$d\in \mathcal{L}_{nc}$ defines a bijective map from $\mathtt{B}_{\mathcal{L},\mathtt{b}_{\lambda}}^{(2)}$ to 
the disjoint union of bases in the $\mathfrak{R}_{n-1}$-modules
$\Delta_{\Bbbk}(n-1,\mu)$, where $\mu\to\lambda$. The linearization of
this map is compatible with the action of $\mathfrak{R}_{n-1}$ and hence gives the desired isomorphism.
\end{proof}

Lemmata~\ref{lem21} and \ref{lem22} allow us to consider the $\mathfrak{i}(\mathfrak{R}_{n-1})$-submodule
\begin{displaymath}
X:=\Delta_{\Bbbk}^{(1)}(n,\lambda)\oplus \Delta_{\Bbbk}^{(2)}(n,\lambda)\subset \Delta_{\Bbbk}(n,\lambda). 
\end{displaymath}
The set $\mathtt{B}_{\mathcal{L},\mathtt{b}_{\lambda}}^{(3)}$ naturally becomes a basis of the quotient
$\Delta_{\Bbbk}(n,\lambda)/X$. Thus, the latter quotient can be identified with $\Delta_{\Bbbk}^{(3)}(n,\lambda)$
(as $\Bbbk$-vector space).

\begin{lemma}\label{lem23}
The $\mathfrak{R}_{n-1}$-module $\mathfrak{i}^*(\Delta_{\Bbbk}(n,\lambda)/X)$  is isomorphic to 
\begin{displaymath}
\bigoplus_{\lambda\to\mu}\Delta_{\Bbbk}(n-1,\mu).
\end{displaymath}
\end{lemma}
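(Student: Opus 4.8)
The plan is to run the same argument as in Lemmata~\ref{lem21} and \ref{lem22}, the new feature being that the pair part through the vertex $n$ now plays the rôle of an \emph{extra} propagating line. As there, one identifies the quotient $\Delta_{\Bbbk}(n,\lambda)/X$ with $\Delta^{(3)}_{\Bbbk}(n,\lambda)$, the $\Bbbk$-span of $\mathtt{B}^{(3)}_{\mathcal{L},\mathtt{b}_{\lambda}}$, on which the restricted $\mathfrak{R}_{n-1}$-action is: multiply in $\mathfrak{R}_{n}$ and declare to be zero any product that lands in $X$ (i.e.\ any product in which $n$ has become a singleton or has joined a propagating line). Write $k=|\lambda|$ for the propagating number of $\mathcal{L}$. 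If $n-k\le 1$ both sides vanish: the left one because $\mathtt{R}_{n}$ then has no type-(3) diagram of propagating number $k$, the right one because $k+1>n-1$ forces $\Delta_{\Bbbk}(n-1,\mu)=0$ for every $\mu\vdash k+1$. So we may assume $n-k\ge 2$.

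The key local observation is that for a type-(3) diagram $d$ with $\{m,n\}\in d$ (so $m\in\underline{n-1}$) and any $d'\in\mathtt{R}_{n-1}$, the last strand of $\mathfrak{i}(d')=d'\otimes 1$ is the identity line: it merely carries the left vertex $n$ to the equator, where it meets $m$. Hence under $\mathfrak{i}^{*}$ the vertex $m$ behaves exactly as a propagating vertex of $d$ would, with a frozen partner $n$ (and, dually, the right vertex $n'$ is never touched by $\mathfrak{i}(\mathfrak{R}_{n-1})$). Using Proposition~\ref{prop11}\eqref{prop11.1} one normalises $\mathcal{L}$ so that all its non-propagating right vertices are singletons; in particular $(n-1)'$ and $n'$ are singletons of every $d\in\mathcal{L}$. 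Let $\mathbf{C}^{(3)}$ be the quotient of $\mathbf{C}_{\mathcal{L}}$ by the span of the $d\in\mathcal{L}$ in which $n$ is a singleton or lies on a propagating line; since the right $S_{k}$-action only permutes right endpoints of propagating lines it preserves the type of the left vertex $n$, so this is a quotient of $(\mathfrak{R}_{n-1},\Bbbk[S_{k}])$-bimodules and $\Delta^{(3)}_{\Bbbk}(n,\lambda)=\mathbf{C}^{(3)}\otimes_{\Bbbk[S_{k}]}\mathcal{S}_{\Bbbk}(\lambda)$. Let $\mathcal{L}'$ be the left cell of $\mathtt{R}_{n-1}$ of propagating number $k+1$ whose right singletons are those of $\mathcal{L}$ apart from $(n-1)'$ and $n'$. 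The map $d\mapsto d^{+}$ that deletes the vertex $n$, replaces the pair part $\{m,n\}$ by a propagating line from $m$ to $(n-1)'$, and deletes $n'$, is a bijection from the set of type-(3) diagrams of $\mathcal{L}$ onto $\mathcal{L}'$ (its inverse un-routes the propagating line ending at $(n-1)'$; the cardinalities agree because $\binom{n-1}{k}(n-k-1)\,k!=\binom{n-1}{k+1}\,(k+1)!$, both sides counting the product of the $k+1$ consecutive integers $n-1,n-2,\dots,n-1-k$). I expect this to extend $\Bbbk$-linearly to an isomorphism $\mathbf{C}^{(3)}\xrightarrow{\ \sim\ }\mathbf{C}_{\mathcal{L}'}$ that is left $\mathfrak{R}_{n-1}$-linear and that carries the right $\Bbbk[S_{k}]$-action on $\mathbf{C}^{(3)}$ to the restriction, along the standard inclusion $\Bbbk[S_{k}]\hookrightarrow\Bbbk[S_{k+1}]$ (the re-routed line being the new, $(k+1)$-st, one), of the right $\Bbbk[S_{k+1}]$-action on $\mathbf{C}_{\mathcal{L}'}$; equivalently, $\mathbf{C}^{(3)}\cong\mathbf{C}_{\mathcal{L}'}\otimes_{\Bbbk[S_{k+1}]}\Bbbk[S_{k+1}]$ as $(\mathfrak{R}_{n-1},\Bbbk[S_{k}])$-bimodules, the last factor being a $(\Bbbk[S_{k+1}],\Bbbk[S_{k}])$-bimodule via that inclusion.

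Granting this, the statement follows formally:
\begin{align*}
\mathfrak{i}^{*}\bigl(\Delta_{\Bbbk}(n,\lambda)/X\bigr)
&\cong\mathbf{C}^{(3)}\otimes_{\Bbbk[S_{k}]}\mathcal{S}_{\Bbbk}(\lambda)
\cong\mathbf{C}_{\mathcal{L}'}\otimes_{\Bbbk[S_{k+1}]}\bigl(\Bbbk[S_{k+1}]\otimes_{\Bbbk[S_{k}]}\mathcal{S}_{\Bbbk}(\lambda)\bigr)\\
&=\mathbf{C}_{\mathcal{L}'}\otimes_{\Bbbk[S_{k+1}]}\mathrm{Ind}_{S_{k}}^{S_{k+1}}\mathcal{S}_{\Bbbk}(\lambda)
\cong\mathbf{C}_{\mathcal{L}'}\otimes_{\Bbbk[S_{k+1}]}\Bigl(\bigoplus_{\lambda\to\mu}\mathcal{S}_{\Bbbk}(\mu)\Bigr)
=\bigoplus_{\lambda\to\mu}\Delta_{\Bbbk}(n-1,\mu),
\end{align*}
where the third isomorphism is the branching rule for induction $\mathrm{Ind}_{S_{k}}^{S_{k+1}}\mathcal{S}_{\Bbbk}(\lambda)\cong\bigoplus_{\lambda\to\mu}\mathcal{S}_{\Bbbk}(\mu)$ (used exactly as in the proofs of Lemmata~\ref{lem21} and \ref{lem22}; see \cite[2.8]{Sa}) and the last equality uses that $-\otimes_{\Bbbk[S_{k+1}]}-$ commutes with finite direct sums together with $\mathbf{C}_{\mathcal{L}'}\otimes_{\Bbbk[S_{k+1}]}\mathcal{S}_{\Bbbk}(\mu)=\Delta_{\Bbbk}(n-1,\mu)$ for any left cell $\mathcal{L}'$ of propagating number $k+1$ (Proposition~\ref{prop11}).

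The hard part is the bimodule isomorphism $\mathbf{C}^{(3)}\cong\mathbf{C}_{\mathcal{L}'}\otimes_{\Bbbk[S_{k+1}]}\Bbbk[S_{k+1}]$. In contrast with Lemmata~\ref{lem21} and \ref{lem22}, where one simply erases a singleton or a whole propagating line, here the pair part through $n$ must be \emph{converted} into a propagating line while the right-hand vertex set is simultaneously trimmed, and one must verify — using that $n$ and $n'$ are inert under $\mathfrak{i}(\mathfrak{R}_{n-1})$ — that $d\mapsto d^{+}$ respects the $\mathfrak{R}_{n-1}$-action, including the vanishing clauses and the straightening scalars, and that the old $S_{k}$-symmetry upgrades to an $S_{k+1}$-symmetry after re-routing. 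This is a careful but routine diagram chase in the spirit of the proofs of Propositions~\ref{prop3}--\ref{prop6}; the normalisation of $\mathcal{L}$ via Proposition~\ref{prop11}\eqref{prop11.1} is precisely what makes the distinguished right vertices $(n-1)'$ and $n'$ available uniformly over the whole cell.
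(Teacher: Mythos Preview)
Your argument is correct and is essentially the paper's own proof, recast in bimodule language. The paper normalises $\mathcal{L}$ so that $\{(n-1)',n'\}$ is a pair part (rather than two singletons as you do), defines the same ``re-routing'' map $\gamma$ turning $\{s,n\}$ into the propagating line $\{s,(n-1)'\}$, and then handles the possible crossing of this new line with the old ones by an explicit straightening element $w\in S_{k+1}$, sending $d\otimes b\mapsto \gamma(d)w\otimes(w^{-1}S_k\otimes b)$; this is exactly your identification $\mathbf{C}^{(3)}\otimes_{\Bbbk[S_k]}\mathcal{S}_\Bbbk(\lambda)\cong \mathbf{C}_{\mathcal{L}'}\otimes_{\Bbbk[S_{k+1}]}\mathrm{Ind}_{S_k}^{S_{k+1}}\mathcal{S}_\Bbbk(\lambda)$ written out on basis elements (the coset $w^{-1}S_k\otimes b$ being a basis vector of the induced module). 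Your induction-functor formulation is a little cleaner conceptually, while the paper's explicit $w$ makes the bijection of bases concrete; neither adds or removes any genuine content.
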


\begin{proof}
First we note that $\mathtt{B}_{\mathcal{L},\mathtt{b}_{\lambda}}^{(3)}$ is non-empty only in the case $k\leq n-2$.
Hence, without loss of generality, we may assume that each $d\in \mathcal{L}_{nc}$ contains the non-propagating
pair part $\{(n-1)',n'\}$. For $d\in \mathcal{L}_{nc}$ containing the non-propagating pair part $\{s,n\}$
denote by $\gamma(d)$ the diagram in $\mathtt{B}_{n-1}$ obtained by substituting the parts $\{s,n\}$ and
$\{(n-1)',n'\}$ by the propagating line $\{s,(n-1)'\}$ (this is illustrated by Figure~\ref{fig27}). 
Note that $\gamma(d)$ has propagating number $k+1$, furthermore, the newly created propagating line
$\{s,(n-1)'\}$ may cross other propagating lines.

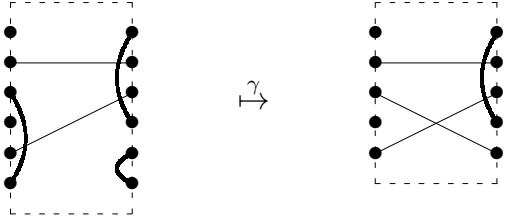
\begin{figure}
\special{em:linewidth 0.4pt} \unitlength 0.80mm
\begin{picture}(90,55)
\put(05,10){\makebox(0,0)[cc]{$\bullet$}}
\put(05,15){\makebox(0,0)[cc]{$\bullet$}}
\put(05,20){\makebox(0,0)[cc]{$\bullet$}}
\put(05,25){\makebox(0,0)[cc]{$\bullet$}}
\put(05,30){\makebox(0,0)[cc]{$\bullet$}}
\put(05,35){\makebox(0,0)[cc]{$\bullet$}}
\put(25,10){\makebox(0,0)[cc]{$\bullet$}}
\put(25,15){\makebox(0,0)[cc]{$\bullet$}}
\put(25,20){\makebox(0,0)[cc]{$\bullet$}}
\put(25,25){\makebox(0,0)[cc]{$\bullet$}}
\put(25,30){\makebox(0,0)[cc]{$\bullet$}}
\put(25,35){\makebox(0,0)[cc]{$\bullet$}}
\put(65,15){\makebox(0,0)[cc]{$\bullet$}}
\put(65,20){\makebox(0,0)[cc]{$\bullet$}}
\put(65,25){\makebox(0,0)[cc]{$\bullet$}}
\put(65,30){\makebox(0,0)[cc]{$\bullet$}}
\put(65,35){\makebox(0,0)[cc]{$\bullet$}}
\put(85,15){\makebox(0,0)[cc]{$\bullet$}}
\put(85,20){\makebox(0,0)[cc]{$\bullet$}}
\put(85,25){\makebox(0,0)[cc]{$\bullet$}}
\put(85,30){\makebox(0,0)[cc]{$\bullet$}}
\put(85,35){\makebox(0,0)[cc]{$\bullet$}}
\dashline{1}(05,05)(05,40)
\dashline{1}(25,40)(05,40)
\dashline{1}(25,40)(25,05)
\dashline{1}(05,05)(25,05)
\dashline{1}(65,10)(65,40)
\dashline{1}(85,40)(65,40)
\dashline{1}(85,40)(85,10)
\dashline{1}(65,10)(85,10)
\drawline(05,15)(25,25)
\drawline(05,30)(25,30)
\drawline(65,15)(85,25)
\drawline(65,30)(85,30)
\drawline(65,25)(85,15)
\linethickness{1pt}
\qbezier(05,10)(10,17.50)(05,25)
\qbezier(25,10)(20,12.50)(25,15)
\qbezier(25,20)(20,27.50)(25,35)
\qbezier(85,20)(80,27.50)(85,35)
\put(45,25){\makebox(0,0)[cc]{$\overset{\gamma}{\mapsto}$}}
\end{picture}
\caption{The map $\gamma$}\label{fig27} 
\end{figure}

As $\gamma(d)$ now has propagating number $k+1$, we have the natural right action of $S_{k+1}$ on the
corresponding cell and Specht modules, with the action of the subgroup $S_{k}$ induced from $\mathbf{C}_{\mathcal{L}}$.
There is a unique element $w\in S_{k+1}$ such that $\gamma(d)w$ can be written such that its propagating
lines do not cross. Now, mapping $d\otimes b$ to $\gamma(d)w\otimes (w^{-1}S_{k}\otimes b)$ defines a bijective map 
from $\mathtt{B}_{\mathcal{L},\mathtt{b}_{\lambda}}^{(3)}$ to the disjoint union of bases in the 
$\mathfrak{R}_{n-1}$-modules $\Delta_{\Bbbk}(n-1,\mu)$, where $\lambda\to\mu$. 
The linearization of this map is compatible with the action of 
$\mathfrak{R}_{n-1}$ by construction and hence gives the desired isomorphism.
\end{proof}

From Lemmata~\ref{lem21}--\ref{lem23} we obtain, as a corollary, the following branching rule 
for restriction of Specht modules.

\begin{theorem}\label{thm24}
For and $k\leq n$ and $\lambda\vdash k$ there is a short exact sequence of $\mathfrak{R}_{n-1}$-modules as follows:
\begin{equation}\label{eq26}
0\to
\Delta_{\Bbbk}(n-1,\lambda)\oplus
\bigoplus_{\mu\to\lambda} \Delta_{\Bbbk}(n-1,\mu)\to
\mathfrak{i}^*(\Delta_{\Bbbk}(n,\lambda))\to
\bigoplus_{\lambda\to\mu} \Delta_{\Bbbk}(n-1,\mu)\to 0.
\end{equation}
\end{theorem}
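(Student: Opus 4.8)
The plan is to assemble the short exact sequence \eqref{eq26} directly from the three lemmata, treating $X$ as the natural submodule and $\Delta_{\Bbbk}(n,\lambda)/X$ as the natural quotient. First I would record the $\Bbbk$-module decomposition
\begin{displaymath}
\Delta_{\Bbbk}(n,\lambda)=\Delta_{\Bbbk}^{(1)}(n,\lambda)\oplus\Delta_{\Bbbk}^{(2)}(n,\lambda)\oplus\Delta_{\Bbbk}^{(3)}(n,\lambda)
\end{displaymath}
coming from the partition $\mathtt{B}_{\mathcal{L},\mathtt{b}_{\lambda}}=\mathtt{B}_{\mathcal{L},\mathtt{b}_{\lambda}}^{(1)}\cup\mathtt{B}_{\mathcal{L},\mathtt{b}_{\lambda}}^{(2)}\cup\mathtt{B}_{\mathcal{L},\mathtt{b}_{\lambda}}^{(3)}$ of the basis, and then observe that $X:=\Delta_{\Bbbk}^{(1)}(n,\lambda)\oplus\Delta_{\Bbbk}^{(2)}(n,\lambda)$ is exactly the $\mathfrak{i}(\mathfrak{R}_{n-1})$-submodule isolated in the discussion preceding Lemma~\ref{lem23}. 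The inclusion $X\hookrightarrow\mathfrak{i}^*(\Delta_{\Bbbk}(n,\lambda))$ and the projection $\mathfrak{i}^*(\Delta_{\Bbbk}(n,\lambda))\tto\Delta_{\Bbbk}(n,\lambda)/X$ then give the exact sequence
\begin{displaymath}
0\to X\to \mathfrak{i}^*(\Delta_{\Bbbk}(n,\lambda))\to \Delta_{\Bbbk}(n,\lambda)/X\to 0
\end{displaymath}
of $\mathfrak{R}_{n-1}$-modules essentially by construction, the only point to check being that $X$ is a genuine submodule (equivalently, that the span of $\mathtt{B}_{\mathcal{L},\mathtt{b}_{\lambda}}^{(3)}$ maps into itself modulo $X$ under the $\mathfrak{i}(\mathfrak{R}_{n-1})$-action) — but this is already granted by the invariance statements in Lemmata~\ref{lem21} and \ref{lem22} together with the remark that $\mathtt{B}_{\mathcal{L},\mathtt{b}_{\lambda}}^{(3)}$ is a basis of the quotient.

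Next I would identify the three terms. By Lemma~\ref{lem21}, $\Delta_{\Bbbk}^{(1)}(n,\lambda)\cong\Delta_{\Bbbk}(n-1,\lambda)$; by Lemma~\ref{lem22}, $\Delta_{\Bbbk}^{(2)}(n,\lambda)\cong\bigoplus_{\mu\to\lambda}\Delta_{\Bbbk}(n-1,\mu)$, both as $\mathfrak{R}_{n-1}$-modules. Since $X$ is the direct sum of these two as $\Bbbk$-modules and the $\mathfrak{i}(\mathfrak{R}_{n-1})$-action preserves each summand (the singletons-at-$n$ diagrams and the propagating-at-$n$ diagrams form $\mathfrak{R}_{n-1}$-stable subsets, as noted in the proofs of those lemmata), we get
\begin{displaymath}
X\cong \Delta_{\Bbbk}(n-1,\lambda)\oplus\bigoplus_{\mu\to\lambda}\Delta_{\Bbbk}(n-1,\mu)
\end{displaymath}
as $\mathfrak{R}_{n-1}$-modules. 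Finally, Lemma~\ref{lem23} identifies $\mathfrak{i}^*(\Delta_{\Bbbk}(n,\lambda)/X)\cong\bigoplus_{\lambda\to\mu}\Delta_{\Bbbk}(n-1,\mu)$. Substituting these identifications into the displayed short exact sequence yields precisely \eqref{eq26}.

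The one genuinely substantive point — and the place I expect the argument to need care — is checking that $\Delta_{\Bbbk}^{(1)}(n,\lambda)$ and $\Delta_{\Bbbk}^{(2)}(n,\lambda)$ are each individually $\mathfrak{i}(\mathfrak{R}_{n-1})$-stable (so that $X$ really is their internal direct sum as a module, not merely as a vector space, and so that the quotient picks up exactly the type-(3) basis). This is where one must use that left multiplication by a diagram $d\otimes 1$ with $d\in\mathtt{R}_{n-1}$ does not alter the part type of the vertex $n\in\underline{n}$: a singleton at $n$ stays a singleton, a propagating line through $n$ can only be rerouted on its left endpoint and so stays propagating (possibly being killed, i.e.\ sent to $0$, if the propagating number drops, which is fine), and a non-propagating pair $\{s,n\}$ with $s\in\underline{n}$ either survives or is destroyed. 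Granting the three lemmata, this bookkeeping is the only thing standing between us and the theorem; everything else is formal. Since the whole construction is defined over $\Bbbk=\mathbb{Z}[\delta,\delta',\tfrac{1}{\delta'}]$ and the maps are $\Bbbk$-linear and basis-preserving, the sequence is exact over any base, so no semisimplicity or field hypothesis is needed.
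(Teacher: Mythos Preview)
Your proposal is correct and follows exactly the route the paper takes: the theorem is stated there as an immediate corollary of Lemmata~\ref{lem21}--\ref{lem23}, and you have simply spelled out the formal assembly of the short exact sequence from those three pieces. The extra care you flag about the individual $\mathfrak{i}(\mathfrak{R}_{n-1})$-stability of $\Delta_{\Bbbk}^{(1)}$ and $\Delta_{\Bbbk}^{(2)}$ is already part of the statements of Lemmata~\ref{lem21} and \ref{lem22}, so nothing further is needed.
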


Note that sequence \eqref{eq26} splits whenever $\mathfrak{R}_{n-1}$ is semisimple. 

Let $\mathcal{Y}$ be the directed Young graph, that is the graph with vertices $\lambda$, where 
$\lambda\vdash k$, $k\in\mathbb{N}_0$; and directed edges $\mu\to\lambda$ as defined above (i.e. $\lambda$ is
obtained from $\mu$ by inserting an insertable node). Denote by $\mathbf{M}$ the incidence matrix of $\mathcal{Y}$.

\begin{corollary}\label{cor25}
For any $\lambda\vdash k$ with $k<n$, the $\Bbbk$-dimension of $\Delta_{\Bbbk}(n,\lambda)$ is given by the
$(\emptyset,\lambda)$-entry of the matrix $(\mathbf{M}+\mathbf{M}^t+\mathbbm{1})^n$, where $\mathbbm{1}$ 
denotes the identity matrix.
\end{corollary}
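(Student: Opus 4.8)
The plan is to read off dimensions in the branching short exact sequence \eqref{eq26} and then solve the resulting linear recursion. For $m\in\mathbb{N}_{0}$ write $v^{(m)}$ for the row vector, indexed by all partitions, whose $\lambda$-entry is $\dim_{\Bbbk}\Delta_{\Bbbk}(m,\lambda)$, with the convention that this entry is $0$ whenever $|\lambda|>m$ (in that range there is no left cell of the required propagating number, hence no Specht module). Since $\Bbbk=\mathbb{Z}[\delta,\delta',\tfrac{1}{\delta'}]$ and all the modules occurring in \eqref{eq26} are free $\Bbbk$-modules (they carry explicit bases, as in Section~\ref{s3.3}), the sequence splits as a sequence of $\Bbbk$-modules and ranks add along it. Hence, for every $\lambda\vdash k$ with $k\le n$,
\[
\dim_{\Bbbk}\Delta_{\Bbbk}(n,\lambda)=\dim_{\Bbbk}\Delta_{\Bbbk}(n-1,\lambda)+\sum_{\mu\to\lambda}\dim_{\Bbbk}\Delta_{\Bbbk}(n-1,\mu)+\sum_{\lambda\to\mu}\dim_{\Bbbk}\Delta_{\Bbbk}(n-1,\mu).
\]

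Next I would recast the three terms through the incidence matrix. By definition $\mathbf{M}_{\mu\lambda}=1$ exactly when $\mu\to\lambda$, so the middle term is $(v^{(n-1)}\mathbf{M})_{\lambda}$; the edges counted by the third term are precisely those recorded by $\mathbf{M}^{t}$, so it equals $(v^{(n-1)}\mathbf{M}^{t})_{\lambda}$; and the first term is $(v^{(n-1)}\mathbbm{1})_{\lambda}$. With the zero-convention above this yields the vector identity $v^{(n)}=v^{(n-1)}\bigl(\mathbbm{1}+\mathbf{M}+\mathbf{M}^{t}\bigr)$ valid for all partitions at once (the contributions of $\mu$ with $|\mu|=k+1>n-1$ vanish on both sides). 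Iterating down to $m=0$ gives $v^{(n)}=v^{(0)}\bigl(\mathbbm{1}+\mathbf{M}+\mathbf{M}^{t}\bigr)^{n}$.

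It then remains to identify the base case. The algebra $\mathfrak{R}_{0}$ on the empty set is just $\Bbbk$, with a unique Specht module $\Delta_{\Bbbk}(0,\varnothing)\cong\Bbbk$; thus $v^{(0)}=e_{\varnothing}$, the unit row vector supported at the empty partition. Therefore $\dim_{\Bbbk}\Delta_{\Bbbk}(n,\lambda)=\bigl(e_{\varnothing}(\mathbbm{1}+\mathbf{M}+\mathbf{M}^{t})^{n}\bigr)_{\lambda}$ is exactly the $(\varnothing,\lambda)$-entry of $(\mathbf{M}+\mathbf{M}^{t}+\mathbbm{1})^{n}$, and for $\lambda\vdash k$ with $k<n$ all the $\mathfrak{R}_{n-1}$-Specht modules invoked along the way are genuine (nonzero), which is the stated range.

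The corollary is really a bookkeeping consequence of Theorem~\ref{thm24}, so the only point demanding care — the would-be main obstacle — is the behaviour at the boundary of the Young lattice: one must check that the recursion coming from \eqref{eq26} remains correct under the zero-convention for over-large partitions, and that the matrix power cannot manufacture spurious entries there (a path from $\varnothing$ through $\mathcal{Y}$ using at most $n$ ascending steps can never reach a partition of size exceeding $n$). Everything else is the additivity of ranks in a split short exact sequence of free $\Bbbk$-modules.
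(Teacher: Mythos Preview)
Your argument is correct and is essentially the same as the paper's: both deduce the dimension formula from the branching short exact sequence of Theorem~\ref{thm24}, you by iterating the resulting linear recursion $v^{(n)}=v^{(n-1)}(\mathbbm{1}+\mathbf{M}+\mathbf{M}^{t})$, the paper by the equivalent observation that $\mathbf{M}+\mathbf{M}^{t}+\mathbbm{1}$ is the incidence matrix of the augmented Young graph $\mathcal{Y}'$ and hence its $n$-th power counts length-$n$ paths from $\varnothing$ to $\lambda$. Your treatment of the boundary case (the zero-convention for $|\lambda|>m$) is a little more careful than the paper's, but the substance is identical.
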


\begin{proof}
Let $\mathcal{Y}'$ be the directed graph obtained from $\mathcal{Y}$ by first adding reverses of all arrows
and then adding all loops. Then $\mathbf{M}':=\mathbf{M}+\mathbf{M}^t+\mathbbm{1}$ is the incidence matrix 
of $\mathcal{Y}'$. By Theorem~\ref{thm24}, the $\Bbbk$-dimension of $\Delta_{\Bbbk}(n,\lambda)$ is 
given by the number of paths of length $n$ in $\mathcal{Y}'$ from $\lambda$ to $\emptyset$. This is exactly the
$(\emptyset,\lambda)$-entry in $(\mathbf{M}')^n$.
\end{proof}

\section{Schur-Weyl duality}\label{s5}

Let $k\in\mathbb{N}$. Consider the complex vector space $V:=\mathbb{C}^k$ endowed with a non-degenerate
symmetric $\mathbb{C}$-bilinear form $(\cdot,\cdot)$ and let $\mathbf{e}:=(e_1,e_2,\dots,e_k)$ be an
orthonormal basis in $V$. We also endow $\mathbb{C}$ with a non-degenerate symmetric $\mathbb{C}$-bilinear 
form $(\cdot,\cdot)$ such that $(1,1)=1$ and set $e_0:=1$ (this is an orthonormal basis in $\mathbb{C}$).
Let $\mathcal{O}_k$ denote the group of orthogonal transformations of $V$, which makes $V$ an
$\mathcal{O}_k$-module in the natural way. We further consider $\mathbb{C}$ as the trivial $\mathcal{O}_k$-module.
This allows us to consider the $\mathcal{O}_k$-module $\overline{V}:=\mathbb{C}\oplus V$. Note that 
$\mathcal{O}_k$ acts by orthogonal transformations of $\overline{V}$ and $\overline{\mathbf{e}}:=
(e_0,e_1,e_2,\dots,e_k)$ is an orthonormal basis in $\overline{V}$. Now for any $m\in\mathbb{N}$ we can
make
\begin{displaymath}
\overline{V}^{\otimes m}:=\underbrace{\overline{V}\otimes\overline{V}\otimes\cdots
\otimes\overline{V}}_{m\text{ times }}
\end{displaymath}
into an $\mathcal{O}_k$-module using the diagonal action.

The symmetric group $S_m$ acts by automorphisms on the $\mathcal{O}_k$-module $\overline{V}^{\otimes m}$
permuting factors of the tensor product. Consider the endomorphism $\varepsilon$ of 
$\overline{V}:=\mathbb{C}\oplus V$ given by the matrix
\begin{displaymath}
\left(\begin{array}{cc}\mathrm{id}_{\mathbb{C}}&0\\0&0\end{array}\right)
\end{displaymath}
and for $i=1,2,\dots,m$ let $\varepsilon_i$ denote the endomorphism 
\begin{displaymath}
\mathrm{id}\otimes \mathrm{id}\otimes\cdots\otimes\mathrm{id}\otimes\varepsilon
\otimes\mathrm{id}\otimes\mathrm{id}\otimes\cdots\otimes\mathrm{id} 
\end{displaymath}
of $\overline{V}^{\otimes m}$ for which $\varepsilon$ is in position $i$ from the left
(this is taken from \cite{So}).
For $i=1,2,\dots,m-1$ consider also the endomorphism $\zeta_i$ of $\overline{V}^{\otimes m}$  
sending $e_{j_1}\otimes e_{j_2}\otimes\cdots\otimes e_{j_m}$, $j_1,j_2,\dots,j_m\in\{0,1,2,\dots,k\}$, to
\begin{displaymath}
\delta_{j_i,j_{i+1}}\sum_{s=0}^k e_{j_1}\otimes e_{j_2}\otimes\cdots\otimes e_{j_{i-1}}
\otimes e_s\otimes e_s\otimes e_{j_{i+2}}\otimes e_{j_{i+3}}\otimes  \cdots\otimes e_{j_m}
\end{displaymath}
(this generalizes \cite{Brauer37}). Denote by $R_m$ the subalgebra of 
$\mathrm{End}_{\mathbb{C}}(\overline{V}^{\otimes m})$ 
generated by the endomorphisms of the $\mathcal{O}_k$-module $\overline{V}^{\otimes m}$ defined above.

\begin{proposition}\label{prop111}
The algebra $R_m$ coincides with $\mathrm{End}_{\mathcal{O}_k}(\overline{V}^{\otimes m})$.
\end{proposition}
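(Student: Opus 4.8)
The plan is to invoke the classical orthogonal Schur--Weyl duality for $\overline{V}^{\otimes m}$ as an $\mathcal{O}_k$-module, and then to identify the commutant combinatorially. First I would observe that, since $\mathcal{O}_k = \mathcal{O}(\overline{V},(\cdot,\cdot))$ acts on $\overline{V}$ preserving a non-degenerate symmetric form and $\dim\overline{V}=k+1$, the classical result of Brauer (as made precise in the modern literature, e.g.\ \cite{Brauer37,Brown55}) tells us that $\mathrm{End}_{\mathcal{O}_k}(\overline{V}^{\otimes m})$ is spanned by the images of the Brauer diagrams on $2m$ points acting in the usual way: each cup/cap contracts or inserts a copy of the form, and each through-strand permutes. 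Concretely the Brauer algebra $\mathfrak{B}_m(k+1)$ surjects onto $\mathrm{End}_{\mathcal{O}_k}(\overline{V}^{\otimes m})$, the parameter $k+1$ being $\dim\overline{V}$.

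Next I would rewrite this Brauer-diagram action in terms of the generators already visible on $\overline{V}=\mathbb{C}e_0\oplus V$. Because $\overline{\mathbf{e}}=(e_0,e_1,\dots,e_k)$ is an orthonormal basis, the cap $\overline{V}^{\otimes 2}\to\mathbb{C}$ sends $e_a\otimes e_b\mapsto\delta_{ab}$ and the cup $\mathbb{C}\to\overline{V}^{\otimes 2}$ sends $1\mapsto\sum_{s=0}^k e_s\otimes e_s$; composing a cap in positions $(i,i+1)$ with a cup in the same positions gives exactly the operator $\zeta_i$ defined in the excerpt. So the operators $S_m$ (permutations) together with $\zeta_1,\dots,\zeta_{m-1}$ already generate the image of $\mathfrak{B}_m(k+1)$, i.e.\ all of $\mathrm{End}_{\mathcal{O}_k}(\overline{V}^{\otimes m})$. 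The point of the present "partial" setup is that one does \emph{not} want the full $\mathcal{O}_{k+1}$ acting on $\overline{V}$; one only has $\mathcal{O}_k$, which fixes the line $\mathbb{C}e_0$ pointwise. The extra endomorphism this buys is precisely the orthogonal projection $\varepsilon$ onto $\mathbb{C}e_0$ (and its translates $\varepsilon_i$), which manifestly commutes with $\mathcal{O}_k$ but not with $\mathcal{O}_{k+1}$. Thus $R_m = \langle S_m,\zeta_i,\varepsilon_i\rangle \subseteq \mathrm{End}_{\mathcal{O}_k}(\overline{V}^{\otimes m})$ is clear; the content is the reverse inclusion.

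For the reverse inclusion, the strategy is to decompose $\overline{V}^{\otimes m}$ as an $\mathcal{O}_k$-module according to how many of the $m$ tensor slots land in $\mathbb{C}e_0$ versus in $V$. Writing $\overline{V}^{\otimes m}=\bigoplus_{T\subseteq\underline{m}} V_T$, where $V_T\cong \mathbb{C}e_0^{\otimes (m-|T|)}\otimes V^{\otimes T}$ is the summand with slots outside $T$ forced into $\mathbb{C}e_0$, each $V_T$ is $\mathcal{O}_k$-isomorphic to $V^{\otimes(m-|T|)}$... correction: to $V^{\otimes|T|}$, and for $T,T'$ with $|T|=|T'|$ the isomorphism is realized by a permutation in $S_m$. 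The idempotents $\prod_{i\notin T}\varepsilon_i\cdot\prod_{i\in T}(1-\varepsilon_i)$ lie in $R_m$ and cut out the $V_T$, so $R_m$ contains the projections onto each isotypic-by-$|T|$ block and onto each $V_T$, and $S_m\subseteq R_m$ identifies the blocks of equal $|T|$; hence $\mathrm{End}_{\mathcal{O}_k}(\overline{V}^{\otimes m})\cong\bigoplus_{j=0}^m \binom{m}{j}^2\text{-blocks built from }\mathrm{Hom}_{\mathcal{O}_k}(V^{\otimes j},V^{\otimes j})$, and it suffices to show $R_m$ surjects onto each $\mathrm{End}_{\mathcal{O}_k}(V^{\otimes j})$ sitting inside the $V_{T}$ with $|T|=j$. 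But on $V^{\otimes j}$ the operators $\varepsilon_i$ (for $i\in T$) act as zero, while $S_j$ (the stabilizer of $T$) and the $\zeta_i$ restricted to $V_T$ generate exactly the image of the Brauer algebra $\mathfrak{B}_j(k)$, which by classical orthogonal Schur--Weyl duality for $(V,\mathcal{O}_k)$ is all of $\mathrm{End}_{\mathcal{O}_k}(V^{\otimes j})$. Assembling these blocks with the block-permuting action of $S_m$ and the block-changing maps furnished by the $\varepsilon_i$'s and $\zeta_i$'s (which change $|T|$ by one) shows $R_m$ fills out the whole commutant.

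The main obstacle I expect is the bookkeeping in the last step: one must check that $R_m$ not only contains each diagonal block $\mathrm{End}_{\mathcal{O}_k}(V_T)$ but also enough off-diagonal Hom-spaces $\mathrm{Hom}_{\mathcal{O}_k}(V_T,V_{T'})$ to generate the full $\mathrm{End}$ of the multiplicity space. For $|T|=|T'|$ this is handled by $S_m$; for $|T'|=|T|\pm 1$ one needs that an appropriate $\zeta_i$ or $\varepsilon_i$ (or products thereof) realizes a nonzero, and in fact generating, $\mathcal{O}_k$-map between the corresponding $V^{\otimes j}$ and $V^{\otimes(j\pm 1)}$ — essentially that the "partial" generator $u$ of $\mathfrak{R}$ maps to something nonzero under the representation, matching the categorical generating set $\{1,x,vu,u\}$ from \S\ref{s12.1}. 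Once one knows $R_m$ contains a generating set for a block-upper-and-lower-triangular family of Hom-spaces whose diagonal blocks are full, a standard argument (multiply across, as in the proof that $\mathfrak{R}$ is generated by those four diagrams) shows $R_m=\mathrm{End}_{\mathcal{O}_k}(\overline{V}^{\otimes m})$; the remaining routine point, that the resulting surjection $\mathfrak{R}_m(\delta,\delta')\tto R_m$ carries the advertised parameter values ($\delta=k+1$, $\delta'=$ something like $1$ coming from $(e_0,e_0)=1$), is then just a matter of evaluating loops and open strings on $\overline{\mathbf{e}}$.
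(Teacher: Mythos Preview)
Your overall strategy---decompose $\overline{V}^{\otimes m}=\bigoplus_{T\subseteq\underline{m}} V_T$ along the splitting $\overline{V}=\mathbb{C}e_0\oplus V$, cut out the pieces with the idempotents built from the $\varepsilon_i$, identify blocks of equal $|T|$ via $S_m$, and invoke classical Brauer duality for $\mathcal{O}_k$ acting on $V^{\otimes j}$ for the diagonal---is exactly the paper's argument. Two points need correcting.

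First, your opening paragraph is wrong as written: $\mathcal{O}_k$ is the orthogonal group of $V$, not of $\overline{V}$; the latter is $\mathcal{O}_{k+1}$. You correct yourself in the next paragraph, so this is expository confusion rather than a mathematical error, but as stated the sentence ``$\mathfrak{B}_m(k+1)$ surjects onto $\mathrm{End}_{\mathcal{O}_k}(\overline{V}^{\otimes m})$'' is false; its image is only $\mathrm{End}_{\mathcal{O}_{k+1}}(\overline{V}^{\otimes m})$, a proper subalgebra of the commutant you are after.

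Second, your handling of the off-diagonal blocks $\mathrm{Hom}_{\mathcal{O}_k}(V_T,V_{T'})$ is misdirected. The case $|T'|=|T|\pm 1$ that you single out is vacuous: $\mathrm{Hom}_{\mathcal{O}_k}(V^{\otimes j},V^{\otimes(j\pm 1)})=0$, since $-1\in\mathcal{O}_k$ acts on $V^{\otimes j}$ by $(-1)^j$ (equivalently, use \eqref{eq112}). Neither $\varepsilon_i$ nor $\zeta_i$ yields a nonzero $\mathcal{O}_k$-map changing $|T|$ by one; on $V_T$ with exactly one of $i,i{+}1$ in $T$ the operator $\zeta_i$ vanishes, and $\varepsilon_i$ is just a projection within the block decomposition. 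The genuine off-diagonal content sits in the cases $|T'|-|T|\in 2\mathbb{Z}\setminus\{0\}$. There the paper's argument is slightly cleaner than your sketch: it uses that $V\otimes V$ contains a \emph{unique} trivial $\mathcal{O}_k$-summand, so that pre- or postcomposition with the corresponding cup/cap on the extra pairs of slots identifies $\mathrm{Hom}_{\mathcal{O}_k}(V^{\otimes i},V^{\otimes j})$ (for $i\equiv j\bmod 2$) with a quotient or subspace of the diagonal $\mathrm{End}_{\mathcal{O}_k}(V^{\otimes\max(i,j)})$, which is already in $R_m$ by the classical result. Your ``multiply across'' idea becomes exactly this once the parity observation is made, but as written it invokes maps changing $|T|$ by one that do not exist.
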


To prove this statement we will need some facts from the classical representation theory of $\mathcal{O}_k$.
We use \cite{GW} as a general reference. Simple $\mathcal{O}_k$-modules appearing in $\overline{V}^{\otimes l}$,
$l\in\mathbb{N}_0$, are indexes by integer partitions $\lambda=(\lambda_1,\lambda_2,\dots,\lambda_k)$,
$\lambda_1\geq \lambda_2\geq \dots\geq\lambda_k\geq 0$, such that $\lambda_1+\lambda_2\leq k$. Let $\Lambda$ denote the
(finite!) set of all such partitions. For $\lambda\in \Lambda$ we denote by $L_{\lambda}$ the corresponding 
simple $\mathcal{O}_k$-module (in particular, $L_{(0)}\cong\mathbb{C}$ and $L_{(1)}\cong V$). Further,
by \cite[Corollary~2.5.3]{KT} we have
\begin{equation}\label{eq112}
V\otimes L_{\lambda}\cong \bigoplus_{\mu\in \Lambda,\,\mu\to\lambda} L_{\mu}\,\,\,\oplus  \,\,\,
\bigoplus_{\mu\in \Lambda,\,\lambda\to\mu} L_{\mu}. 
\end{equation}

\begin{proof}[Proof of Proposition~\ref{prop111}.]
We obviously have $R_m\subset \mathrm{End}_{\mathcal{O}_k}(\overline{V}^{\otimes m})$ and hence we need
only to check the reverse inclusion. 

For $X\subset \{1,2,\dots,m\}$ let $V^{\otimes X}$ denote the tensor product $V_1\otimes V_2\otimes\cdots\otimes
V_m$, where $V_i=V$ if $i\in X$ and $V_i=\mathbb{C}$ otherwise. Using biadditivity of both the tensor product and 
$\mathrm{Hom}$ functor, we can write
\begin{displaymath}
\mathrm{End}_{\mathcal{O}_k}(\overline{V}^{\otimes m})\cong
\bigoplus_{X,Y\subset \{1,2,\dots,m\}}
\mathrm{Hom}_{\mathcal{O}_k}(V^{\otimes X},V^{\otimes Y}).
\end{displaymath}
Using the action of $S_m$ together with endomorphisms $\varepsilon_i$, we only need to check that for
any $i,j\in\{1,2,\dots,m\}$ the component
\begin{displaymath}
\mathrm{Hom}_{\mathcal{O}_k}(V^{\otimes \{1,2,\dots,i\}},V^{\otimes \{1,2,\dots,j\}}) 
\end{displaymath}
belongs to $R_m$. First we observe that \eqref{eq112} implies that for this component to be nonzero,
the elements $i$ and $j$ should have the same parity. In the case $i=j$ the claim follows directly from \cite{Brauer37}. 
By \eqref{eq112}, the $\mathcal{O}_k$-module $V\otimes V$ has a unique
direct summand isomorphic to $\mathbb{C}\otimes\mathbb{C}$. This yields that
\begin{displaymath}
\begin{array}{cc}
\mathrm{Hom}_{\mathcal{O}_k}(V^{\otimes \{1,2,\dots,i\}},V^{\otimes \{1,2,\dots,j\}})\tto  
\mathrm{Hom}_{\mathcal{O}_k}(V^{\otimes \{1,2,\dots,j\}},V^{\otimes \{1,2,\dots,j\}}), & j>i;\\ 
\mathrm{Hom}_{\mathcal{O}_k}(V^{\otimes \{1,2,\dots,i\}},V^{\otimes \{1,2,\dots,j\}})\subset  
\mathrm{Hom}_{\mathcal{O}_k}(V^{\otimes \{1,2,\dots,i\}},V^{\otimes \{1,2,\dots,i\}}), & i>j. 
\end{array}
\end{displaymath}
As noted above, the left hand side belongs to $R_m$ by \cite{Brauer37}. The claim follows.
\end{proof}

For $i=1,2,\dots,m-1$ denote by $p_i$ the element
\begin{displaymath}
\underbrace{1\otimes 1\otimes\cdots\otimes 1}_{i-1\,\text{ factor}}
\otimes vu\otimes (vu)^{\star}\otimes 
\underbrace{1\otimes 1\otimes\cdots\otimes 1}_{m-i-1\,\text{ factor}}\in \mathtt{R}_m.
\end{displaymath}
Combining \cite{Brauer37} with \cite{KudryavtsevaMazorchuk06}, we have:

\begin{proposition}[Presentation for $\mathfrak{R}_m(\delta,\delta')$]\label{prop115}
A presentations of the algebra $\mathfrak{R}_m(\delta,\delta')$ is given by Coxeter generators 
$s_i=(i,i+1)$, $i=1,2,\dots,m-1$, of $S_m$; together with elements $\varepsilon_i$, $i=1,2,\dots,m$;
and $p_i$, $i=1,2,\dots,m-1$; satisfying the following defining relations (for all indices for
which the corresponding relations make sense):
\begin{gather*}
s_i^2=1;\quad s_is_j=s_js_i, |i-j|\neq 1;\quad s_is_{i+1}s_i=s_{i+1}s_is_{i+1};\\
p_i^2=\delta p_i;\quad p_ip_j=p_jp_i, |i-j|\neq 1;\quad p_ip_{j}p_i=p_i, |i-j|=1;\\
p_is_i=s_ip_i=p_i;\quad p_is_j=s_jp_i, |i-j|\neq 1;\quad s_ip_jp_i=s_jp_i, p_ip_js_i=p_is_j,  |i-j|=1;\\
\varepsilon_i^2=\delta' \varepsilon_i;\quad \varepsilon_i\varepsilon_j=\varepsilon_j\varepsilon_i, i\neq j;\quad
\varepsilon_i s_i\varepsilon_i=\varepsilon_i\varepsilon_{i+1};\quad
s_i\varepsilon_i=\varepsilon_{i+1}s_i; \quad s_i\varepsilon_j=\varepsilon_{j}s_i, j\neq i,i+1;\\
p_i\varepsilon_j=\varepsilon_j p_i, j\neq i,i+1;\quad
p_i\varepsilon_i p_i=\delta' p_i;\quad \varepsilon_i p_i\varepsilon_i=\varepsilon_i\varepsilon_{i+1};\\
p_i\varepsilon_i=p_i\varepsilon_{i+1};\quad  p_i\varepsilon_i\varepsilon_{i+1}=\delta' p_i\varepsilon_i;\quad
\varepsilon_ip_i=\varepsilon_{i+1}p_i;\quad \varepsilon_i\varepsilon_{i+1}p_i=\delta'\varepsilon_i p_i.
\end{gather*}
\end{proposition}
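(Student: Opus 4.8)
The plan is to prove Proposition~\ref{prop115} by the standard two-step strategy for diagram-algebra presentations: first verify that the given relations hold in $\mathfrak{R}_m(\delta,\delta')$, then show that the abstract algebra $A$ defined by these generators and relations surjects onto $\mathfrak{R}_m(\delta,\delta')$ with matching dimensions, so the surjection is an isomorphism. The checking of relations is entirely routine diagram calculus: $s_i,p_i$ generate the subalgebra $\mathfrak{B}_m(\delta)$ and the displayed relations among them (with the $p_i$ normalized as $vu\otimes(vu)^\star$) are exactly Brauer's relations as recorded in \cite{Brauer37}, while the $\varepsilon_i=u_{m,i}$ are the singleton-creating idempotents and the mixed relations follow by drawing the concatenations and counting open strings (each removed string contributing $\delta'$) and loops. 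The only subtlety in this step is bookkeeping of normalizations so that, e.g., $\varepsilon_i^2=\delta'\varepsilon_i$ and $p_i\varepsilon_ip_i=\delta' p_i$ come out with the stated scalars; this is where one must be careful but there is no real difficulty.

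For the surjectivity-and-dimension step, I would invoke \cite{KudryavtsevaMazorchuk06}, which (as the phrase ``combining \cite{Brauer37} with \cite{KudryavtsevaMazorchuk06}'' signals) gives a presentation of the partial Brauer \emph{semigroup}, i.e.\ of the set $\mathtt{R}_m$ under $\circ$ (equivalently $\mathfrak{R}_m(1,1)$ on the level of diagrams), by generators $s_i,\varepsilon_i,p_i$ and a list of relations. The task is then to show that adjoining the parameters $\delta,\delta'$ only rescales: one checks that in $A$ every word in the generators can be rewritten, using the relations, as a scalar (a monomial in $\delta-1$? no — here in $\delta,\delta'$, since we use the unnormalized $p_i=vu\otimes(vu)^\star$ with $p_i^2=\delta p_i$) times a ``reduced'' word, and that the reduced words are in bijection with the diagram basis $\mathtt{R}_m$. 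This is a normal-form / confluence argument: order the generators, push all $\varepsilon$'s to one side, use the Brauer relations to bring the $s,p$ part to a canonical diagram form, and verify that any two reductions of the same word give the same scalar and the same diagram. Since $\dim_{\Bbbk}\mathfrak{R}_m(\delta,\delta')=|\mathtt{R}_m|$ by Proposition~\ref{prop1}\eqref{prop1.1}, matching the number of normal forms with $|\mathtt{R}_m|$ forces the surjection $A\tto\mathfrak{R}_m(\delta,\delta')$ to be an isomorphism.

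The main obstacle is the confluence/well-definedness of the normal form, specifically making sure the \emph{scalar} attached to a reduced word is independent of the reduction path. In the semigroup case of \cite{KudryavtsevaMazorchuk06} there is no scalar to track, so one must check that the $\delta$'s and $\delta'$'s introduced by the relations $p_i^2=\delta p_i$, $\varepsilon_i^2=\delta'\varepsilon_i$, $p_i\varepsilon_ip_i=\delta'p_i$, $p_i\varepsilon_i\varepsilon_{i+1}=\delta'p_i\varepsilon_i$, etc., accumulate consistently — equivalently, that the relations are ``homogeneous'' for the grading by (number of loops, number of open strings) removed. The cleanest way to handle this is not to redo the combinatorics from scratch but to argue that the defining relations of $A$ are precisely the lifts of the relations in \cite{KudryavtsevaMazorchuk06} with each occurrence of a diagram-scalar replaced by the topologically-correct monomial in $\delta,\delta'$, so that any consequence relation in $A$ maps, under the evident homomorphism $A\to\mathfrak{R}_m(\delta,\delta')$, to a true diagram identity with matching scalar; hence the kernel is contained in the relations already imposed. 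Combined with the dimension count this finishes the proof, and I would present the details of the relation-checking briefly (or delegate routine cases to the reader) and spend the bulk of the write-up on the normal-form argument.
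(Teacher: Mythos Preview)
Your proposal is correct and is precisely the approach the paper intends: the paper does not give a proof at all, merely prefacing the proposition with ``Combining \cite{Brauer37} with \cite{KudryavtsevaMazorchuk06}, we have'', so the content is delegated entirely to those references. Your two-step plan (verify relations by diagram calculus, then lift the monoid presentation of \cite{KudryavtsevaMazorchuk06} to the two-parameter algebra via a normal-form/dimension argument, with the Brauer subalgebra relations supplied by \cite{Brauer37}) is exactly what ``combining'' those sources amounts to, and your identification of the scalar-consistency check as the only point needing care beyond the cited semigroup case is apt.
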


Using Proposition~\ref{prop115} we obtain the following:

\begin{theorem}[Schur-Weyl duality]
\label{thm114}{\tiny\hspace{1mm}}

\begin{enumerate}[$($i$)$]
\item\label{thm114.1} The image of $\mathcal{O}_k$ in $\mathrm{End}_{\mathbb{C}}(\overline{V}^{\otimes m})$
coincides with the centralizer of $R_m$.
\item\label{thm114.2} Mapping 
\begin{displaymath}
\begin{array}{lcll}
\gamma&\mapsto&\gamma,& \gamma\in S_m;\\
u_{m,i}&\mapsto& \varepsilon_i,& i=1,2,\dots,m;\\
p_i&\mapsto&\zeta_{i},& i=1,2,\dots,m-1;
\end{array}
\end{displaymath}
defines an epimorphism $\Psi:\mathfrak{R}_m(k+1,1)\tto R_m$.
\item\label{thm114.3} For $m\leq k$ the epimorphism $\Psi$ is injective and hence bijective. 
\end{enumerate}
\end{theorem}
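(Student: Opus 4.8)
The plan is to treat the three parts in the order stated, with \eqref{thm114.3} carrying essentially all of the difficulty. Part \eqref{thm114.1} is immediate from Proposition~\ref{prop111} and the classical double centralizer theorem: since $\mathcal{O}_k$ is linearly reductive over $\mathbb{C}$, the module $\overline{V}^{\otimes m}$ is completely reducible (see \cite{GW}), so the subalgebra $A\subseteq\mathrm{End}_{\mathbb{C}}(\overline{V}^{\otimes m})$ generated by the image of $\mathcal{O}_k$ is semisimple and acts faithfully on $\overline{V}^{\otimes m}$. Its commutant is $\mathrm{End}_{\mathcal{O}_k}(\overline{V}^{\otimes m})$, which equals $R_m$ by Proposition~\ref{prop111}, and the double centralizer theorem then identifies the commutant of $R_m$ with $A$, i.e.\ with the algebra generated by the image of $\mathcal{O}_k$.

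For \eqref{thm114.2}, surjectivity needs no argument: $R_m$ is \emph{defined} as the subalgebra generated by the permutation action of $S_m$, the $\varepsilon_i$, and the $\zeta_i$, so once $\Psi$ is shown to be a well-defined algebra homomorphism it is automatically onto. Well-definedness is a verification of the defining relations of Proposition~\ref{prop115} specialised at $\delta=k+1=\dim\overline{V}$ (the value of a closed loop) and $\delta'=1$ (matching $\varepsilon^2=\varepsilon$). The relations internal to $\{s_i\}$ are the symmetric group relations; those internal to $\{s_i,\varepsilon_i\}$ are verified in \cite{So}; those internal to $\{s_i,p_i\}$ are verified in \cite{Brauer37}; and the remaining mixed relations ($p_i\varepsilon_j=\varepsilon_jp_i$ for $j\neq i,i+1$; $p_i\varepsilon_ip_i=\delta' p_i$; $\varepsilon_ip_i\varepsilon_i=\varepsilon_i\varepsilon_{i+1}$; $p_i\varepsilon_i=p_i\varepsilon_{i+1}$; $p_i\varepsilon_i\varepsilon_{i+1}=\delta' p_i\varepsilon_i$ together with their $\star$-images) reduce to a finite case check on basis vectors $e_{j_1}\otimes\cdots\otimes e_{j_m}$ using the explicit formulas for $\varepsilon_i$ and $\zeta_i$. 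I expect nothing here beyond bookkeeping.

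Part \eqref{thm114.3} I would prove by a dimension count, exploiting that $\Psi$ is already known to be surjective. On one side, $\dim_{\mathbb{C}}\mathfrak{R}_m(k+1,1)=|\mathtt{R}_m|$ is the number of partial matchings of the $2m$-element set $\mathbf{m}$. On the other side, using the self-duality $\overline{V}\cong\overline{V}^{*}$ afforded by the form one has $R_m=\mathrm{End}_{\mathcal{O}_k}(\overline{V}^{\otimes m})\cong(\overline{V}^{\otimes 2m})^{\mathcal{O}_k}$, and the decomposition $\overline{V}=\mathbb{C}e_0\oplus V$ gives $\overline{V}^{\otimes N}=\bigoplus_{S\subseteq\underline{N}}V^{\otimes S}$ as $\mathcal{O}_k$-modules (with $V$ in the positions indexed by $S$ and the trivial module elsewhere), so $(\overline{V}^{\otimes N})^{\mathcal{O}_k}=\bigoplus_{S}(V^{\otimes|S|})^{\mathcal{O}_k}$. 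By the first fundamental theorem of invariant theory for $\mathcal{O}_k$, the space $(V^{\otimes j})^{\mathcal{O}_k}$ is spanned by the contraction tensors indexed by perfect matchings of $\underline{j}$ (and is $0$ for $j$ odd), and these tensors are linearly independent as soon as $j\leq 2k$ — equivalently, the classical surjection $\mathfrak{B}_{j/2}(k)\tto\mathrm{End}_{\mathcal{O}_k}(V^{\otimes j/2})$ is injective for $j/2\leq k$ (see \cite{Brown55}). Taking $N=2m$ with $m\leq k$, every $S\subseteq\underline{2m}$ satisfies $|S|\leq 2m\leq 2k$, so $\dim(\overline{V}^{\otimes 2m})^{\mathcal{O}_k}=\sum_{S\subseteq\underline{2m}}\#\{\text{perfect matchings of }S\}=\#\{\text{partial matchings of }\underline{2m}\}=|\mathtt{R}_m|$. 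Hence $\dim_{\mathbb{C}}R_m=\dim_{\mathbb{C}}\mathfrak{R}_m(k+1,1)$, and the surjection $\Psi$ must be an isomorphism.

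The only genuinely non-elementary input is this last linear independence of the orthogonal invariant tensors in the range $m\leq k$ (equivalently, faithfulness of the classical Brauer algebra in that range), which I would cite rather than reprove; everything else is a matter of assembling Propositions~\ref{prop111} and \ref{prop115}, the double centralizer theorem, and the combinatorics of partial matchings. Accordingly, the step I expect to be the main obstacle is pinning down exactly which classical statement about $\mathcal{O}_k$-invariants gives the sharp bound $m\leq k$ and citing it in a form compatible with the decomposition of $\overline{V}^{\otimes m}$ used above.
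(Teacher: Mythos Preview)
Your treatment of parts \eqref{thm114.1} and \eqref{thm114.2} coincides with the paper's: Proposition~\ref{prop111} plus the double centralizer theorem for \eqref{thm114.1}, and verification of the relations of Proposition~\ref{prop115} (citing \cite{Brauer37} and \cite{So} for the non-mixed ones) for \eqref{thm114.2}.

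For part \eqref{thm114.3} your argument is correct but follows a genuinely different route. The paper also proves injectivity by a dimension count, but computes $\dim R_m$ representation-theoretically rather than invariant-theoretically: it observes that the matrix of the endofunctor $\overline{V}\otimes{}_{-}$ on the Grothendieck group of $\mathrm{add}\bigl(\bigoplus_{\lambda\in\Lambda}L_\lambda\bigr)$, written in the basis of simples, is the principal $\Lambda\times\Lambda$ minor of $\mathbf{M}+\mathbf{M}^t+\mathbbm{1}$, by comparing the tensor-product rule \eqref{eq112} with the Specht-module branching rule of Theorem~\ref{thm24} and Corollary~\ref{cor25}. Since $m\leq k$ guarantees that iterated tensoring from the trivial module stays inside $\Lambda$, this matches $\dim R_m$ with the path-counting formula for $\dim\mathfrak{R}_m$. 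Your approach instead uses $R_m\cong(\overline{V}^{\otimes 2m})^{\mathcal{O}_k}$, the first fundamental theorem for $\mathcal{O}_k$, and the classical faithfulness of $\mathfrak{B}_{n}(k)$ on $V^{\otimes n}$ for $n\leq k$, reducing everything to the combinatorial identity that partial matchings of $2m$ points are perfect matchings of subsets. The paper's route is self-contained within the machinery it has already built (branching rules for $\mathfrak{R}_m$) and needs only the Koike--Terada formula \eqref{eq112} as external input; your route bypasses that machinery entirely but imports two classical facts about $\mathcal{O}_k$-invariants. Both are clean; yours is arguably more direct if one is willing to quote the FFT and the Brauer--Brown faithfulness bound, while the paper's has the virtue of tying the Schur--Weyl statement back to the Specht-module combinatorics developed earlier.
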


\begin{remark}\label{rem325}
{\rm 
The paper \cite{Gr} mentions a  Schur-Weyl duality for the rook Brauer algebra attributing it to an unpublished
paper of G.~Benkart and T.~Halverson, but that paper was never made available, \cite{Ha}. We were recently 
informed by T.~Halverson that a  Schur-Weyl duality for the rook Brauer algebra is now in process of 
independently being worked out by one of his students (\cite{De}).
} 
\end{remark}

\begin{proof}
As $\overline{V}^{\otimes m}$ is a semi-simple representation of $\mathcal{O}_k$, claim \eqref{thm114.1}
follows from Proposition~\ref{prop111} by standard arguments (as e.g. in the proof of \cite[Theorem~4.2.10]{GW}). 
To prove claim \eqref{thm114.2} we just have to check the relations
given by Proposition~\ref{prop115}. The first three lines of relations follow from \cite{Brauer37},
the fourth line follows from \cite{So}. To check the remaining two lines is a straightforward computation.

Because of claim \eqref{thm114.2}, to prove claim \eqref{thm114.3} it is enough to prove the equality 
$\dim(R_m)=\dim(\mathfrak{R}_m(k+1,1))$ for $m\leq k$. Denote by $\mathbf{N}$ the principal 
minor of the matrix $\mathbf{M}+\mathbf{M}^t+\mathbbm{1}$ given by rows and columns with indices in $\Lambda$. 
Set $L:=\oplus_{\lambda\in\Lambda}L_{\lambda}$ and let $\mathcal{C}:=\mathrm{add}(L)$. The endofunctor 
$\overline{V}\otimes{}_{-}$ of $\mathcal{C}$ is exact and hence induces a linear transformation
of the Grothendieck group $[\mathcal{C}]$. Denote by $N$ the matrix of this transformation in the basis
of simple modules. Comparing \eqref{eq112} with Theorem~\ref{thm24} and Corollary~\ref{cor25} we get 
$N=\mathbf{N}$. This yields $\dim(R_m)=\dim(\mathfrak{R}_m(k+1,1))$ and thus $R_m\cong \mathfrak{R}_m(k+1,1)$, 
completing the proof.
\end{proof}

\section{Theorem~\ref{thm1} in the degenerate cases}\label{s4}

\subsection{The case $\delta'=0$}\label{s4.1}

In this subsection we assume $\Bbbk=\mathbb{C}$ and $\delta'=0$. Similarly to Subsection~\ref{s3.1}, 
for every left cell $\mathcal{L}$ of $\mathtt{R}_n$ we have the corresponding cell module $\mathbf{C}_{\mathcal{L}}$.
If $k$ is the propagating number of $\mathcal{L}$, then the group $S_k$ acts freely on $\mathbf{C}_{\mathcal{L}}$ by
automorphisms permuting the right ends of the propagating lines in diagrams from $\mathcal{L}$. 

\begin{lemma}\label{lem501}
Let $\mathcal{L}$ and $\mathcal{L}'$ be two left cells having the same propagating number $k$. Then 
$\mathbf{C}_{\mathcal{L}}\cong \mathbf{C}_{\mathcal{L}'}$ as $\mathfrak{R}_n$--$S_k$-bimodules.
\end{lemma}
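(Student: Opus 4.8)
The plan is to adapt the argument used in the $\delta'\in\Bbbk^\times$ case (Proposition~\ref{prop11}\eqref{prop11.1}) to the degenerate setting, taking care that we no longer have the normalizing idempotents $\varepsilon'_{(a,b,k)}$ available but instead exhibit an explicit bimodule isomorphism given by right multiplication by a suitable diagram. First I would use the conjugation action of $S_n$ (by diagrams consisting only of propagating lines) to reduce, without loss of generality, to the case where $\mathcal{L}$ contains the element $\varepsilon_{(a,b,k)}$ and $\mathcal{L}'$ contains $\varepsilon_{(c,d,k)}$ with $c\ge a$; since $\delta'=0$, $\varepsilon_{(a,b,k)}$ is no longer scaled to an idempotent, but the diagram calculus of Proposition~\ref{prop1} still computes products of basis diagrams (now with $\delta'=0$ the second line of \eqref{prop1.2} kills any product producing an open string). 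The key point is that the candidate intertwining map $\varphi_{(a,c)}$ from the proof of Proposition~\ref{prop11} is a diagram with \emph{no singletons created at the equator} when multiplied on the right of a diagram $d\in\mathcal{L}$, so no factor of $\delta'$ is produced.

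Concretely, the main step is to show that right multiplication $R_{\varphi_{(a,c)}}\colon \mathbf{C}_{\mathcal{L}}\to\mathbf{C}_{\mathcal{L}'}$, $d\mapsto d\varphi_{(a,c)}$, is well-defined (lands in $\mathbf{C}_{\mathcal{L}'}$), is a bijection on the distinguished bases, and commutes with both the left $\mathfrak{R}_n$-action and the right $S_k$-action. Well-definedness and the $S_k$-equivariance are immediate from the fact that $\varphi_{(a,c)}$ only modifies the right-hand endpoints (turning $a$ of the non-propagating caps/cups on the right into propagating lines, roughly speaking), so it does not disturb the left pattern that defines $\sim_L$ nor the labelling of propagating-line right endpoints. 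The $\mathfrak{R}_n$-equivariance is the assertion $x\cdot(d\varphi_{(a,c)})=(x\cdot d)\varphi_{(a,c)}$ for $x\in\mathtt{R}_n$, which follows because, by associativity in $\mathfrak{R}_n$, $x(d\varphi_{(a,c)})=(xd)\varphi_{(a,c)}$ whenever $xd\ne 0$ in $\mathbf{C}_\mathcal{L}$, while if $xd$ falls out of $\mathbf{C}_{\mathcal{L}}$ (i.e. has smaller propagating number, or produces an open string, hence is zero with $\delta'=0$) one must check that $(xd)\varphi_{(a,c)}$ correspondingly lies outside $\mathbf{C}_{\mathcal{L}'}$ or vanishes — this is where one uses that right multiplication by $\varphi_{(a,c)}$ cannot \emph{increase} the propagating number and cannot remove an open string that was already formed on the left.

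For the inverse, I would exhibit an explicit diagram $\psi$ (a kind of mirror/partial inverse of $\varphi_{(a,c)}$, built from $vu$ and $(vu)^\star$ blocks) with $\varepsilon_{(c,d,k)}\psi=\varepsilon_{(a,b,k)}$ up to a scalar that is a \emph{power of $\delta-1$-free} monomial — in fact with coefficient $1$, since the relevant concatenation produces only closed loops with coefficient $\delta$, and here we are over $\mathbb{C}$ with $\delta$ arbitrary; one simply chooses $\psi$ so that no loops are produced at all (a direct juxtaposition of matching cups and caps with zero-length chains). Then $R_\psi$ is a two-sided inverse to $R_{\varphi_{(a,c)}}$ on the level of cell modules, giving the bimodule isomorphism. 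The case $k=0$ is handled separately exactly as in Proposition~\ref{prop11}, using $\varepsilon_{(a,b,0)}$ with $\underline{n}$ split into singletons and $\varphi_{(a,c)}=\varepsilon_{(c,d,0)}$; here one must be slightly careful because with $\delta'=0$ products of two defect-$n$ diagrams may vanish, but this only makes more products zero, which is compatible with the $\Bbbk$-linear map being an isomorphism of the underlying free modules and intertwining the (possibly more often zero) actions.

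The main obstacle I anticipate is the bookkeeping in the $\mathfrak{R}_n$-equivariance check when $\delta'=0$: one has to verify that the "otherwise $\mapsto 0$" clause in the definition of the cell-module action is respected symmetrically on both sides, i.e. that $xd\notin\mathbf{C}_{\mathcal{L}}$ (or $xd=0$) if and only if $(xd)\varphi_{(a,c)}\notin\mathbf{C}_{\mathcal{L}'}$ (or $=0$). This requires a careful diagram-chase showing that post-composing with $\varphi_{(a,c)}$ neither creates nor destroys the obstructions (drop in propagating number; appearance of an open string in the equator of $x|d$) — essentially because $\varphi_{(a,c)}$ interacts only with the already-fixed right boundary of $d$ and the straightening of $x|d$ happens entirely to the left of that boundary. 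Once this is pinned down, everything else is the routine verification sketched above.
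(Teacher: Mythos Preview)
Your approach has a genuine gap: the claim that right multiplication by $\varphi_{(a,c)}$ produces ``no factor of $\delta'$'' is false. Every diagram $d\in\mathcal{L}$ has the same right-hand structure as $\varepsilon_{(a,b,k)}$, in particular $a$ singletons in $\underline{n}'$. The element $\varphi_{(a,c)}$ begins with $(u\otimes u^\star)^{\otimes a}$, so its first $a$ positions on the left are singletons as well. In the concatenation $d|\varphi_{(a,c)}$ these meet to form $a$ open strings at the equator, contributing a factor ${\delta'}^a$; this is precisely why the computation in Proposition~\ref{prop11} reads $\varepsilon'_{(a,b,k)}\varphi_{(a,c)}=\frac{1}{{\delta'}^a}\varepsilon'_{(c,d,k)}$ (equivalently $\varepsilon_{(a,b,k)}\varphi_{(a,c)}={\delta'}^a\,\varepsilon_{(c,d,k)}$). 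With $\delta'=0$ and $a>0$ the map $d\mapsto d\varphi_{(a,c)}$ is therefore identically zero on $\mathbf{C}_{\mathcal{L}}$, not an isomorphism. The same happens in your $k=0$ case: right-multiplying by $\varepsilon_{(c,d,0)}$, whose left side is all singletons, annihilates everything. More generally, any single diagram you post-compose with must absorb the right-side singletons of $d$, and the only way a singleton disappears in a product is via an open string, so no choice of intertwining \emph{diagram} can avoid the ${\delta'}$ factor once $\mathcal{L}$ has right-side singletons.

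The paper's proof bypasses this entirely and is much shorter: it does not realise the isomorphism as right multiplication at all. Instead it observes that a diagram $d\in\mathcal{L}$ is determined by (i) its parts contained in $\underline{n}$, (ii) its propagating lines, and (iii) its parts contained in $\underline{n}'$; the left cell $\mathcal{L}$ fixes (iii), while the left $\mathfrak{R}_n$-action and the right $S_k$-action see only (i) and (ii). Hence sending $d\in\mathcal{L}$ to the unique $d'\in\mathcal{L}'$ with the same data (i) and (ii) is a bijection of bases that manifestly intertwines both actions. No products in $\mathfrak{R}_n$ are computed, so $\delta'=0$ causes no difficulty.
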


\begin{proof}
For every $d\in \mathcal{L}$ there is a unique $d'\in \mathcal{L}'$ having the same propagating lines
and the same parts contained in $\underline{n}$ as $d$. From the definition of multiplication in 
$\mathfrak{R}_n$ it follows that mapping $d\to d'$ defines the required isomorphism.
\end{proof}

Similarly to Subsection~\ref{s3.1}, for $\lambda\vdash k$ we can now define the $\mathfrak{R}_n$-module
\begin{displaymath}
\Delta(\lambda):= \mathbf{C}_{\mathcal{L}}\otimes_{\mathbb{C}[S_k]}\mathcal{S}(\lambda),
\end{displaymath}
where $\mathcal{S}(\lambda)$ is the Specht module for $S_k$ corresponding to $\lambda$. From 
\cite{GX} it follows that the algebra $\mathfrak{R}_n$ is cellular in the sense of \cite{GL} with $\Delta(\lambda)$
being the corresponding structural modules (usually also called {\em cell modules}). In particular, from the
general theory of cellular algebras, see for example \cite[Theorem~3.7]{GL}, we have that the Cartan matrix of 
$\mathfrak{R}_n$ equals $MM^t$, where $M$ is the decomposition matrix for $\Delta$'s. In this subsection we will 
determine the latter. But first let us determine simple $\mathfrak{R}_n$-modules. 

\begin{proposition}\label{lem502}
Let $I$ be the linear span of all  $d\in \mathtt{R}_n\setminus \mathtt{B}_n$.
\begin{enumerate}[$($i$)$]
\item\label{lem502.1} The set $I$ is a nilpotent ideal of $\mathfrak{R}_n$.
\item\label{lem502.2} We have $\mathfrak{R}_n/I\cong \mathfrak{B}_n(\delta)$. 
\end{enumerate}
\end{proposition}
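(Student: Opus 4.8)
The plan is to establish both claims by a direct analysis of the multiplication rule for $\mathfrak{R}_n$ specialised at $\delta'=0$. For \eqref{lem502.1}, first I would observe that $I$ is spanned by those basis diagrams $d\in\mathtt{R}_n$ that have at least one singleton, i.e.\ $\#^s(d)>0$, so that $\mathtt{R}_n\setminus\mathtt{B}_n = \bigcup_{l\ge 1}\mathtt{R}_n^{(l)}$ in the notation of Section~\ref{s12.2}. To see that $I$ is a two-sided ideal, take $d\in\mathtt{R}_n\setminus\mathtt{B}_n$ and an arbitrary $d'\in\mathtt{R}_n$; in the product $dd'$, a singleton of $d$ lying on the left edge survives as a singleton of $dd'$, while a singleton of $d$ at the equator either survives (via a chain to the exterior, cases \eqref{en1.2} or \eqref{en1.3}) or is paired with another singleton to form an open string, which at $\delta'=0$ kills the product entirely. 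Hence $dd'$ is either $0$ or again a diagram with a singleton, so $dd'\in I$; the argument on the other side is symmetric (apply $\star$). For nilpotency, I would use the defect filtration: the straightening rule for partial Brauer diagrams can only decrease or preserve the number of propagating lines, and dually can only increase or preserve the number of singletons \emph{unless} two equator singletons cancel — but at $\delta'=0$ that event contributes the factor $0$. Therefore in any nonzero product $d_1 d_2\cdots d_N$ of diagrams each of defect $\ge 1$, the defect of the partial product is non-decreasing and strictly increases at least... here I must be more careful: it is cleaner to argue via the propagating number. A diagram in $\mathtt{R}_n\setminus\mathtt{B}_n$ has $\#^p(d)\le n-1$ (an odd number of left and of right singletons, by the parity observation in the proof of Proposition~\ref{prop2}, forces a genuine loss). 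Actually the crispest route: filter $\mathfrak{R}_n$ by the ideals $J_{\le t}$ spanned by diagrams with $\#^p(d)\le t$; these are ideals since straightening never raises the propagating number. Then $I = J_{\le n-1}$ intersected with the span of diagrams having a singleton, and $I^{n+1}\subseteq$ (span of diagrams with $\#^p\le n-1-\text{something}$) — I would verify that multiplying two diagrams of $I$ whose product is nonzero and stays of maximal possible propagating number $n-1$ nonetheless strictly decreases the number of "live" propagating lines after enough steps, because each nonzero product of singleton-carrying diagrams either loses a propagating line or keeps the same underlying through-strand structure while the singletons at the edges can only be transported around. The safe formulation I would commit to: $I^{N}=0$ for $N$ large because the product of $N$ diagrams, each of defect $\ge 1$, is forced (when nonzero) to accumulate singletons on the outer edges at a rate that exceeds $2n$ once $N>2n+1$, contradiction. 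I would write this out as: in $d_1d_2$ with both in $I$ and the product nonzero, every equator singleton of $d_1$ must be connected to the exterior (type \eqref{en1.2} is forbidden at $\delta'=0$), so the left-edge singletons of $d_1d_2$ number $\ge$ the left-edge singletons of $d_1$, and similarly a right-edge singleton of $d_2$ either persists on the right edge of $d_1d_2$ or gets paired with a left-edge singleton of... no. I think the honest main obstacle is precisely pinning down nilpotency cleanly; the ideal property is routine.

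For \eqref{lem502.2}, I would define the map $\pi\colon\mathfrak{R}_n\to\mathfrak{B}_n(\delta)$ to be the linear projection sending a Brauer diagram $d\in\mathtt{B}_n$ to itself and every non-Brauer diagram to $0$, i.e.\ $\pi$ is reduction modulo $I$ composed with the obvious identification of $\mathfrak{R}_n/I$ with the $\Bbbk$-span of $\mathtt{B}_n$. It remains to check that this linear isomorphism $\mathfrak{R}_n/I\xrightarrow{\sim}\mathfrak{B}_n(\delta)$ is an algebra map, i.e.\ that for $d,d'\in\mathtt{B}_n$ the image of $dd'$ (computed in $\mathfrak{R}_n$) under $\pi$ equals the Brauer product of $d$ and $d'$. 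But the product of two Brauer diagrams involves no singletons at all, hence no open strings are ever removed during straightening, so only loops appear and each contributes a factor $\delta$ exactly as in the Brauer algebra: thus $dd'=\delta^{k}\,(d\circ d')$ in $\mathfrak{R}_n$ with $d\circ d'\in\mathtt{B}_n$, and this is literally the defining relation of $\mathfrak{B}_n(\delta)$. Since $\mathtt{B}_n$ is a subsemigroup of the diagram composition, $\pi$ restricted to $\mathfrak{B}_n$-sums is the identity and the multiplicativity is immediate; moreover $\ker\pi = I$ has dimension $|\mathtt{R}_n|-|\mathtt{B}_n|$, confirming $\mathfrak{R}_n/I\cong\mathfrak{B}_n(\delta)$ as algebras.

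The only genuinely delicate point, as noted, is the nilpotency of $I$ in part \eqref{lem502.1}; I expect the rest to be a short verification. For nilpotency I would settle on the following clean argument: equip $\mathfrak{R}_n$ with the two-sided ideal filtration by propagating number, $0 = \mathfrak{R}_n^{[-1]}\subseteq \mathfrak{R}_n^{[0]}\subseteq\cdots\subseteq\mathfrak{R}_n^{[n]}=\mathfrak{R}_n$ where $\mathfrak{R}_n^{[t]}$ is spanned by diagrams of propagating number $\le t$. A diagram $d\in\mathtt{R}_n$ with a singleton and propagating number $t$ has at least one singleton on each edge (if both edges are singleton-free it is Brauer); so if $d,d'\in I$ have propagating number $t,t'$ respectively and $dd'\ne 0$, I claim $\#^p(dd')<\min(t,t')$ or else $dd'$ again has singletons on both edges AND... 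In fact the cleanest true statement is: $dd'\in I$ always (shown above) and $\#^p(dd')\le\min(t,t')$ always (general diagram calculus); and when equality $\#^p(dd')=t=t'$ holds, every propagating line of $d$ is matched through the equator to a propagating line of $d'$ with no singleton interference, which forces the equator to be free of the forbidden type-\eqref{en1.2} cancellations, and iterating, a product $d_1\cdots d_N$ with all $\#^p$ equal and nonzero behaves like a product in the symmetric-group part decorated by fixed edge-singleton patterns — but those patterns cannot be maintained indefinitely under composition once $N$ exceeds the finite number of diagrams of that propagating number with those singleton sets, so some power repeats a diagram and, being idempotent-like is impossible for a proper singleton-carrying diagram. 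Rather than belabour this, I would in the final writeup simply invoke the standard fact (cellularity, forthcoming in Section~\ref{s4} via \cite{GX,GL}) that the defect provides a filtration of $\mathfrak{R}_n$ by ideals whose associated graded at $\delta'=0$ has the non-Brauer layers acting as radical, giving $I^{n+1}=0$ directly.
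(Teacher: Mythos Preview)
Your treatment of claim~\eqref{lem502.2} and of the ideal property in claim~\eqref{lem502.1} is correct and matches the paper's approach.

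The gap is in the nilpotency argument. You actually state the right idea in passing --- ``the product of $N$ diagrams, each of defect $\ge 1$, is forced (when nonzero) to accumulate singletons on the outer edges at a rate that exceeds $2n$'' --- but you do not trust it, and you wander off into propagating-number filtrations and finally punt to cellularity. None of that is needed, and the cellularity appeal is close to circular in context. The paper's argument is a one-line pigeonhole count, and it is worth seeing how short it is: every diagram in $\mathtt{R}_n\setminus\mathtt{B}_n$ has at least two singletons (since $|\mathbf{n}|=2n$ is even, the number of singletons has even parity). In a product $d_1\cdots d_{n+1}$ of such diagrams there are therefore at least $2n+2$ singletons floating in the concatenation $d_1|d_2|\cdots|d_{n+1}$. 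You have already observed that, at $\delta'=0$, each such singleton must in a nonzero product survive as a singleton of the final diagram (the only alternative is that it terminates an open string, which contributes a factor $\delta'=0$). But the final diagram lives on $2n$ vertices, so it has at most $2n$ singletons. Hence at least two singletons were absorbed into an open string, the product vanishes, and $I^{n+1}=0$.

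So: drop the filtration attempts and the cellularity reference, and simply run the pigeonhole count you already half-wrote. Your bound $N>2n+1$ is not wrong, just wasteful; using that each $d_i$ has defect $\ge 2$ rather than $\ge 1$ gives the sharp $I^{n+1}=0$ (and indeed $I^n\ne 0$).
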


\begin{proof}
If $d$ is a diagram containing a singleton, then, composing $d$ with any diagram from any side, the singleton
of $d$ either contributes to a singleton of the product or to an open string, removing which we get $0$ as
$\delta'=0$. This implies that $I$ is a two-sided ideal of $\mathfrak{R}_n$. Given $n+1$ diagrams with
singletons we have at least $2n+2$ singletons. At most $2n$ of these can contribute to singletons in the
product. Hence at least one of these singletons must contribute to an open string in the product which implies
that the product is zero. Thus $I^{n+1}=0$ and claim \eqref{lem502.1} follows (note that it is easy to see 
that $I^{n}\neq 0$).
 
Let $d\in \mathtt{B}_n$. Then, mapping the class of $d$ in $\mathfrak{R}_n/I$ to $d$, defines an isomorphism 
$\mathfrak{R}_n/I\to\mathfrak{B}_n(\delta)$ and completes the proof.
\end{proof}

In particular, from Proposition~\ref{lem502} it follows that the algebras $\mathfrak{R}_n$ and 
$\mathfrak{B}_n=\mathfrak{B}_n(\delta)$ have the same simple modules. For $k\in\{n,n-2,n-4,\dots,1/0\}$
(here $1/0$ means `$1$ or $0$ depending on the parity of $n$')
and $\lambda\vdash k$ denote by $\Delta^{\mathfrak{B}}(\lambda)$ the {\em Brauer Specht module} for the algebra 
$\mathfrak{B}_n$ associated with $\lambda$, constructed similarly to the above (but using only diagrams from
$\mathtt{B}_n$). The decomposition matrix for these modules is completely determined in \cite{Martin08-9}. 
Below, for every module $\Delta(\lambda)$ we construct a filtration with subquotients isomorphic to Brauer
Specht modules and explicitly determine the multiplicities of Brauer Specht modules in these filtrations,
thus determining the decomposition matrix for $\Delta$'s. For a left cell $\overline{\mathcal{L}}$ in
$\mathtt{B}_n$ we denote by $\mathbf{C}^{\mathfrak{B}}_{\overline{\mathcal{L}}}$ the corresponding
cell module for $\mathfrak{B}_n$ constructed similarly to the above.

Let $\mathcal{L}$ be a left cell with propagating number $k$. For $l\in\{n-k,n-k-2,\dots,1/0\}$ denote by
$\mathcal{L}_l$ the set of all diagrams $d\in \mathcal{L}$ with exactly $l$ singletons in $\underline{n}$.
Then $\mathcal{L}$ is a disjoint union of the $\mathcal{L}_l$'s. Further, for each $l$ as above let
$\mathbf{C}_{\mathcal{L}}^{(l)}$ denote the linear span (inside $\mathbf{C}_{\mathcal{L}}$) of all
diagrams $d\in \mathcal{L}$ with at least $l$ singletons in $\underline{n}$. Obviously,
$\mathbf{C}_{\mathcal{L}}^{(1/0)}=\mathbf{C}_{\mathcal{L}}$.

\begin{lemma}\label{lem503}
Let $l\in\{n-k,n-k-2,\dots,1/0\}$.
\begin{enumerate}[$($i$)$]
\item\label{thm503.1} Each $\mathbf{C}_{\mathcal{L}}^{(l)}$ is a submodule of $\mathbf{C}_{\mathcal{L}}$
contained in $\mathbf{C}_{\mathcal{L}}^{(l-2)}$ (the latter for $l\neq 1/0$).
\item\label{thm503.2} The set $\mathcal{L}_l$ descends to a basis in the 
quotient $\mathbf{C}_{\mathcal{L}}^{(l)}/\mathbf{C}_{\mathcal{L}}^{(l+2)}$
(where $\mathbf{C}_{\mathcal{L}}^{(n-k+2)}:=0$).
\item\label{thm503.3} The free action of $S_k$ on $\mathcal{L}$ preserves
each $\mathcal{L}_l$ and hence induces a free action of $S_k$ on the quotient
$\mathbf{C}_{\mathcal{L}}^{(l)}/\mathbf{C}_{\mathcal{L}}^{(l+2)}$.
\end{enumerate}
\end{lemma}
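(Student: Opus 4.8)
The plan is to prove the three parts of Lemma~\ref{lem503} by a direct analysis of the multiplication rule in $\mathfrak{R}_n$ when $\delta'=0$, exploiting the fact that in this regime the number of singletons in the left half of a diagram can only \emph{increase} under left multiplication. First I would record the key multiplicative observation: if $d'\in\mathtt{R}_n$ and $d\in\mathcal{L}$, then in the product $d'd$ each singleton of $d$ lying in $\underline{n}$ either remains a singleton of $d'd$ (contributing to a singleton of the product on the left edge), or becomes an endpoint of an open string in the equator $d'|d$; in the latter case, since open strings carry the factor $\delta'=0$, we get $d'd=0$. Consequently, whenever $d'd\neq 0$, the diagram $d'd$ has at least as many singletons in $\underline{n}$ as $d$ does. (Here one must also note that $d'd$ has the same or fewer propagating lines, but since we stay inside $\mathbf{C}_{\mathcal{L}}$ — i.e.\ keep only products landing in $\mathcal{L}$ — the propagating number is exactly $k$, so the count of singletons in $\underline{n}$ determines, and is determined by, the structure on the left.)

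Granting this observation, part~\eqref{thm503.1} is immediate: $\mathbf{C}_{\mathcal{L}}^{(l)}$ is spanned by diagrams in $\mathcal{L}$ with at least $l$ singletons in $\underline{n}$, and for $d$ in this spanning set and any $d'\in\mathtt{R}_n$ the element $d'\cdot d$ (which is $d'd$ if this lies in $\mathbf{C}_{\mathcal{L}}$ and $0$ otherwise) is either zero or has at least $l$ singletons in $\underline{n}$; hence $\mathbf{C}_{\mathcal{L}}^{(l)}$ is a submodule, and the containment $\mathbf{C}_{\mathcal{L}}^{(l)}\subseteq\mathbf{C}_{\mathcal{L}}^{(l-2)}$ is trivial from the definition (diagrams with $\geq l$ singletons have $\geq l-2$ singletons), noting the parity constraint $l\in\{n-k,n-k-2,\dots\}$ that makes the indices consecutive in this chain. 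Part~\eqref{thm503.2} is then a bookkeeping statement: $\mathbf{C}_{\mathcal{L}}^{(l)}$ has as a basis the diagrams of $\mathcal{L}$ with $\geq l$ singletons in $\underline{n}$, and $\mathbf{C}_{\mathcal{L}}^{(l+2)}$ those with $\geq l+2$; since, by the parity constraint, no diagram in $\mathcal{L}$ has exactly $l+1$ singletons in $\underline{n}$, the complementary basis vectors are precisely $\mathcal{L}_l$ (the diagrams with exactly $l$ singletons), which therefore descends to a basis of the quotient, with the convention $\mathbf{C}_{\mathcal{L}}^{(n-k+2)}:=0$ taking care of the top layer.

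For part~\eqref{thm503.3}, I would recall from the setup that $S_k$ acts on $\mathbf{C}_{\mathcal{L}}$ by permuting the right endpoints (in $\underline{n}'$) of the propagating lines of diagrams in $\mathcal{L}$, and that this action is free. Such a permutation alters only the right edge of a diagram and leaves the left edge — in particular the set of singletons contained in $\underline{n}$ — completely untouched; hence it preserves each $\mathcal{L}_l$, and therefore each submodule $\mathbf{C}_{\mathcal{L}}^{(l)}$, and so descends to an action on the quotient $\mathbf{C}_{\mathcal{L}}^{(l)}/\mathbf{C}_{\mathcal{L}}^{(l+2)}$. Freeness of the descended action follows because the action was already free on the basis $\mathcal{L}$ (it has no fixed diagrams) and it permutes the basis $\mathcal{L}_l$ of the quotient among itself. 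I expect the only genuinely delicate point to be the first one — verifying carefully that left multiplication cannot decrease the number of left-edge singletons once $\delta'=0$ forces open strings to kill the product — and ensuring the parity bookkeeping (so that the layers $\mathbf{C}_{\mathcal{L}}^{(l)}$ for $l\in\{n-k,n-k-2,\dots,1/0\}$ really do form a chain with one-dimensional-worth-of-$S_k$-orbits gaps $\mathcal{L}_l$ between consecutive terms); everything else is routine diagram calculus.
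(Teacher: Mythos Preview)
Your proposal is correct and follows essentially the same approach as the paper's own proof: the key observation in both is that, with $\delta'=0$ and the propagating number held fixed (as it must be for the product to stay in $\mathbf{C}_{\mathcal{L}}$), each $\underline{n}$-singleton of $d$ either contributes a $\underline{n}$-singleton to $d'd$ or forces an open string and hence a zero product, so the number of left-edge singletons cannot decrease; parts~\eqref{thm503.2} and~\eqref{thm503.3} are then declared straightforward in the paper, exactly as you treat them. Your write-up is simply more detailed than the paper's two-line argument.
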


\begin{proof} 
Let $d$ be a diagram with a singleton and $d'$ be any diagram. If the propagating number of $d'd$ is the same
as the propagating number of $d$, then each singleton in $d$ lying in $\underline{n}$ either contributes to a 
singleton in $d'd$ lying in $\underline{n}$ or to an open string making the product $d'd$ zero. This means
that the number of singleton in $\underline{n}$ can only increase in the product. Claim \eqref{thm503.1} follows.
Claims \eqref{thm503.2} and \eqref{thm503.3} are straightforward.
\end{proof}

Since simple $\mathfrak{R}_n$ and $\mathfrak{B}_n$ modules coincide, it is left to examine the 
$\mathfrak{B}_n$-module structure of the sections 
\begin{displaymath}
N_l(\lambda):= \mathbf{C}_{\mathcal{L}}^{(l)}/\mathbf{C}_{\mathcal{L}}^{(l+2)}\otimes_{\mathbb{C}[S_k]} 
\mathcal{S}(\lambda),
\end{displaymath}
for all $\lambda\vdash k$ and $l\in\{n-k,n-k-2,\dots,1/0\}$. These sections are well-defined by Lemma~\ref{lem503}.

For $\lambda\vdash k$ and $l\in\{n-k,n-k-2,\dots,1/0\}$ denote by $\mathcal{X}_{\lambda,l}$ the set of
all $\mu\vdash k+l$ for which $\mathcal{S}(\mu)$ occurs as a summand in the induced module 
\begin{displaymath}
\mathrm{Ind}_{S_k\oplus S_l}^{S_{k+l}}(\mathcal{S}(\lambda)\otimes \mathcal{S}((l))) 
\end{displaymath}
and let $m_{\mu}$ denote the corresponding multiplicity (this multiplicity can be computed using the 
classical Littlewood-Richardson rule, see \cite[4.9]{Sa}). Note that the Specht module $\mathcal{S}((l))$ is
the trivial $S_l$-module. Similarly, denote by $\mathcal{Y}_{\lambda,l}$ the set of
all $\mu\vdash k+l$ for which $\mathcal{S}(\mu)$ occurs as a summand in the induced module 
\begin{displaymath}
\mathrm{Ind}_{S_k\oplus S_l}^{S_{k+l}}(\mathbb{C}[S_k]\otimes \mathcal{S}((l))) 
\end{displaymath}
and let $m'_{\mu}$ denote the corresponding multiplicity. Our final step is the following:

\begin{theorem}\label{thm505}
The  $\mathfrak{B}_n$-module $N_l(\lambda)$ decomposes into a direct sum  as follows:
\begin{displaymath}
N_l(\lambda)\cong\bigoplus_{\mu\in \mathcal{X}_{\lambda,l}} m_{\mu} \Delta^{\mathfrak{B}}(\mu).
\end{displaymath}
\end{theorem}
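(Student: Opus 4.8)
The plan is to compute $N_l(\lambda)$ explicitly by picking a convenient representative cell module and decomposing it as an $\mathfrak{B}_n$-$S_k$-bimodule. First I would use Lemma~\ref{lem501} to replace $\mathcal{L}$ by any left cell with propagating number $k$; so fix the cell $\mathcal{L}$ whose diagrams have a fixed configuration on the right-hand side, with the propagating lines ending at $1',\dots,k'$ and the remaining $(n-k)$ right vertices paired up in a fixed way. The diagrams in $\mathcal{L}_l$ are then indexed by the data on the left edge: a choice of which $l$ of the $n$ left vertices are singletons, a choice of $k$ propagating lines joining $k$ of the remaining $n-l$ left vertices to $1',\dots,k'$, and a pairing of the leftover $n-l-k$ left vertices. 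The key observation is that the quotient $\mathbf{C}_{\mathcal{L}}^{(l)}/\mathbf{C}_{\mathcal{L}}^{(l+2)}$, as a $\mathfrak{B}_n$-module, is isomorphic to the cell module $\mathbf{C}^{\mathfrak{B}}_{\overline{\mathcal{L}}}$ for a Brauer algebra cell $\overline{\mathcal{L}}$ with propagating number $k+l$: indeed, modulo $\mathbf{C}_{\mathcal{L}}^{(l+2)}$, multiplying a diagram $d\in\mathcal{L}_l$ by a Brauer diagram $b$ cannot decrease the number of left singletons (Lemma~\ref{lem503}\eqref{thm503.1}), so in the quotient each left singleton behaves exactly like the left endpoint of a propagating line that runs to a ``frozen'' extra right vertex — but since $\delta'=0$ these singletons can never be capped off, only permuted among themselves.

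Concretely, I would set up a bijection between $\mathcal{L}_l$ and a basis of $\mathbf{C}^{\mathfrak{B}}_{\overline{\mathcal{L}}}\otimes_{\mathbb{C}[S_{k+l}]}\mathbb{C}[S_{k+l}\,/\,(S_k\times S_l)]$-type object: send $d\in\mathcal{L}_l$ to the Brauer half-diagram obtained by replacing each left singleton of $d$ with a propagating line to one of $l$ new right vertices $ (k+1)',\dots,(k+l)'$ (using the natural order to fix the assignment), thereby producing a Brauer diagram with $k+l$ propagating lines together with the bookkeeping of which $l$ of these lines come from singletons. The right $S_k$-action on $\mathbf{C}_{\mathcal{L}}$ becomes, under this identification, the restriction of the right $S_{k+l}$-action on $\mathbf{C}^{\mathfrak{B}}_{\overline{\mathcal{L}}}$ to the subgroup $S_k\times S_l$ acting on the propagating endpoints (with $S_l$ permuting the ``singleton-born'' lines and $S_k$ the genuine ones), and the $\mathfrak{B}_n$-action on the left is unaffected. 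This is precisely the content one needs: as a $\mathfrak{B}_n$-$\mathbb{C}[S_k\times S_l]$-bimodule,
\begin{displaymath}
\mathbf{C}_{\mathcal{L}}^{(l)}/\mathbf{C}_{\mathcal{L}}^{(l+2)}\;\cong\;\mathbf{C}^{\mathfrak{B}}_{\overline{\mathcal{L}}}\big|_{S_k\times S_l},
\end{displaymath}
where on the right $S_{k+l}$ acts as the cell automorphism group and we restrict.

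Granting this, the theorem follows by a standard tensor-hom / Frobenius-reciprocity manipulation. We have
\begin{displaymath}
N_l(\lambda)=\big(\mathbf{C}^{\mathfrak{B}}_{\overline{\mathcal{L}}}\big|_{S_k\times S_l}\big)\otimes_{\mathbb{C}[S_k]}\mathcal{S}(\lambda)
=\mathbf{C}^{\mathfrak{B}}_{\overline{\mathcal{L}}}\otimes_{\mathbb{C}[S_{k+l}]}\Big(\mathbb{C}[S_{k+l}]\otimes_{\mathbb{C}[S_k\times S_l]}\big(\mathcal{S}(\lambda)\otimes\mathcal{S}((l))\big)\Big),
\end{displaymath}
using that $S_l$ acts trivially on the ``frozen'' lines — which is why the second tensor factor is $\mathcal{S}((l))$, the trivial $S_l$-module — and the definition of $N_l(\lambda)$. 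Decomposing the induced $S_{k+l}$-module $\mathrm{Ind}_{S_k\times S_l}^{S_{k+l}}(\mathcal{S}(\lambda)\otimes\mathcal{S}((l)))\cong\bigoplus_{\mu\in\mathcal{X}_{\lambda,l}}m_\mu\,\mathcal{S}(\mu)$ by the Littlewood–Richardson rule, and using that $\mathbf{C}^{\mathfrak{B}}_{\overline{\mathcal{L}}}\otimes_{\mathbb{C}[S_{k+l}]}\mathcal{S}(\mu)\cong\Delta^{\mathfrak{B}}(\mu)$ by the definition of the Brauer Specht module, yields the claimed direct sum decomposition. Semisimplicity of $\mathbb{C}[S_{k+l}]$ is what turns the filtration by the $\mathbf{C}_{\mathcal{L}}^{(l)}$ into an honest direct sum at the level of each section and lets ``Specht module'' genuinely decompose rather than merely filter.

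I expect the main obstacle to be the careful verification of the bimodule isomorphism in the second paragraph — in particular checking that the $\mathfrak{B}_n$-action on the left genuinely agrees on both sides (one must confirm that when a Brauer diagram acts on $d\in\mathcal{L}_l$ and the propagating number drops, the result lands in $\mathbf{C}_{\mathcal{L}}^{(l+2)}$ and hence is killed, matching the Brauer cell-module rule that sends such a product to $0$), and that the residual $S_k\times S_l$-action is exactly the restriction of the cell automorphism action with $S_l$ acting trivially on no extra data. The combinatorial bookkeeping of ``which left singletons go to which new right vertices'' must be pinned down by a fixed convention so the bijection is well-defined, but this is routine; the conceptual heart is recognizing the $\delta'=0$ specialization as forcing singletons to behave like inert propagating strands.
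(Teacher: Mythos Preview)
Your overall strategy is the same as the paper's --- identify the section $\mathbf{C}_{\mathcal{L}}^{(l)}/\mathbf{C}_{\mathcal{L}}^{(l+2)}$ with (a piece of) the Brauer cell module $\mathbf{C}^{\mathfrak{B}}_{\overline{\mathcal{L}}}$ of propagating number $k+l$, then use Littlewood--Richardson --- but the displayed bimodule isomorphism is wrong as stated, and this is not just cosmetic.

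You claim $\mathbf{C}_{\mathcal{L}}^{(l)}/\mathbf{C}_{\mathcal{L}}^{(l+2)}\cong \mathbf{C}^{\mathfrak{B}}_{\overline{\mathcal{L}}}\big|_{S_k\times S_l}$ as $\mathfrak{B}_n$--$\mathbb{C}[S_k\times S_l]$-bimodules. Count dimensions: $|\mathcal{L}_l|=|\overline{\mathcal{L}}|/l!$, because the $l$ singletons in a diagram of $\mathcal{L}_l$ are \emph{unordered}, whereas in $\overline{\mathcal{L}}$ the $l$ extra propagating lines terminate at the labelled vertices $(k{+}1)',\dots,(k{+}l)'$. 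Your ``natural order'' bijection lands in a set of coset representatives for $S_l$ acting on $\overline{\mathcal{L}}$, and that subset is not $\mathfrak{B}_n$-stable, so you do not get a module map this way. Relatedly, there is no $S_l$-action on $\mathbf{C}_{\mathcal{L}}^{(l)}/\mathbf{C}_{\mathcal{L}}^{(l+2)}$ at all (the singletons carry no labels to permute), so the phrase ``$S_l$ acts trivially on the frozen lines'' is not describing an action on the left-hand side. This error then propagates: in your Frobenius step you tensor over $\mathbb{C}[S_k]$ on the left but over $\mathbb{C}[S_k\times S_l]$ on the right, and the two sides differ by a factor of $l!$ in dimension.

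The correct statement, which is exactly what the paper proves, is
\[
\mathbf{C}_{\mathcal{L}}^{(l)}/\mathbf{C}_{\mathcal{L}}^{(l+2)}\;\cong\;\mathbf{C}^{\mathfrak{B}}_{\overline{\mathcal{L}}}\otimes_{\mathbb{C}[S_l]}\mathcal{S}((l))\;\cong\;\big(\mathbf{C}^{\mathfrak{B}}_{\overline{\mathcal{L}}}\big)^{S_l}
\]
as $\mathfrak{B}_n$--$\mathbb{C}[S_k]$-bimodules. The paper realises this isomorphism by the symmetrisation map $\varphi(d)=\sum_{\sigma\in S_l}\overline{d}_\sigma$ (summing over all $l!$ ways to attach the singletons to the new right vertices), which visibly lands in the $S_l$-invariants and is $\mathfrak{B}_n$-equivariant. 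Once you have this, your Frobenius manipulation is correct: tensoring over $\mathbb{C}[S_k]$ with $\mathcal{S}(\lambda)$ gives $\mathbf{C}^{\mathfrak{B}}_{\overline{\mathcal{L}}}\otimes_{\mathbb{C}[S_k\times S_l]}(\mathcal{S}(\lambda)\boxtimes\mathcal{S}((l)))$, and then induction up to $S_{k+l}$ finishes the job. So the fix is small but essential: replace the restriction by the $S_l$-coinvariants, and replace your order-dependent bijection by the symmetrisation map.
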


\begin{proof}
Taking into account the action of $S_k$, it is of course enough to show that we have the following 
isomorphism of $\mathfrak{B}_n$-modules:
\begin{displaymath}
\mathbf{C}_{\mathcal{L}}^{(l)}/\mathbf{C}_{\mathcal{L}}^{(l+2)}\cong 
\bigoplus_{\mu\in \mathcal{Y}_{\lambda,l}} m'_{\mu} \Delta^{\mathfrak{B}}(\mu). 
\end{displaymath}
The left hand side has basis $\mathcal{L}_l$ by Proposition~\ref{lem503}\eqref{thm503.2}. 
If $d\in \mathcal{L}_l$ and $d'\in\mathtt{B}_n$, then $d'd\neq0$ implies that every $\underline{n}$-singleton in $d$
corresponds (maybe via some connection on the equator) to a propagating line in $d'$. In particular, the 
propagating number of $d'$ must be at least $k+l$. Note also that $\underline{n}$-singletons in $d$ are
indistinguishable under interchange. 

Without loss of generality we may assume that $d$ has propagating lines $\{1,1'\}$, $\{2,2'\},\dots$, $\{k,k'\}$
and $\underline{n}$-singletons $\{k+1\},\{k+2\},\dots,\{k+l\}$. Let $\overline{\mathcal{L}}$ be a left cell 
in $\mathtt{B}_n$ with propagating number $k+l$. Without loss of  generality we may assume that 
$\overline{\mathcal{L}}$ contains a diagram with propagating lines $\{1,1'\},\{2,2'\},\dots,\{k+l,(k+l)'\}$.
Define the linear map 
\begin{displaymath}
\varphi:\mathbf{C}_{\mathcal{L}}^{(l)}/\mathbf{C}_{\mathcal{L}}^{(l+2)}\to 
\mathbf{C}^{\mathfrak{B}}_{\overline{\mathcal{L}}} 
\end{displaymath}
on the basis element $d\in \mathcal{L}_l$ as follows: $\varphi(d)$ is the sum of all 
$\overline{d}\in \overline{\mathcal{L}}$ such that $d$ and $\overline{d}$ have the same propagating
lines containing an element in $\{1',2',\dots,k'\}$ and, moreover, $d$ and $\overline{d}$
have the same pair parts in $\underline{n}$ (see an example on Figure~\ref{fig507}). 
It is now easy to check that this map provides the required isomorphism. 
\end{proof}

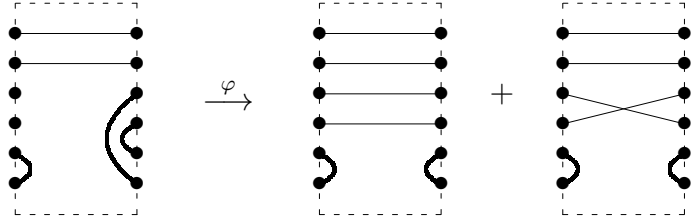
\begin{figure}
\special{em:linewidth 0.4pt} \unitlength 0.80mm
\begin{picture}(115,45)
\put(05,10){\makebox(0,0)[cc]{$\bullet$}}
\put(05,15){\makebox(0,0)[cc]{$\bullet$}}
\put(05,20){\makebox(0,0)[cc]{$\bullet$}}
\put(05,25){\makebox(0,0)[cc]{$\bullet$}}
\put(05,30){\makebox(0,0)[cc]{$\bullet$}}
\put(05,35){\makebox(0,0)[cc]{$\bullet$}}
\put(25,10){\makebox(0,0)[cc]{$\bullet$}}
\put(25,15){\makebox(0,0)[cc]{$\bullet$}}
\put(25,20){\makebox(0,0)[cc]{$\bullet$}}
\put(25,25){\makebox(0,0)[cc]{$\bullet$}}
\put(25,30){\makebox(0,0)[cc]{$\bullet$}}
\put(25,35){\makebox(0,0)[cc]{$\bullet$}}
\put(55,10){\makebox(0,0)[cc]{$\bullet$}}
\put(55,15){\makebox(0,0)[cc]{$\bullet$}}
\put(55,20){\makebox(0,0)[cc]{$\bullet$}}
\put(55,25){\makebox(0,0)[cc]{$\bullet$}}
\put(55,30){\makebox(0,0)[cc]{$\bullet$}}
\put(55,35){\makebox(0,0)[cc]{$\bullet$}}
\put(75,10){\makebox(0,0)[cc]{$\bullet$}}
\put(75,15){\makebox(0,0)[cc]{$\bullet$}}
\put(75,20){\makebox(0,0)[cc]{$\bullet$}}
\put(75,25){\makebox(0,0)[cc]{$\bullet$}}
\put(75,30){\makebox(0,0)[cc]{$\bullet$}}
\put(75,35){\makebox(0,0)[cc]{$\bullet$}}
\put(95,10){\makebox(0,0)[cc]{$\bullet$}}
\put(95,15){\makebox(0,0)[cc]{$\bullet$}}
\put(95,20){\makebox(0,0)[cc]{$\bullet$}}
\put(95,25){\makebox(0,0)[cc]{$\bullet$}}
\put(95,30){\makebox(0,0)[cc]{$\bullet$}}
\put(95,35){\makebox(0,0)[cc]{$\bullet$}}
\put(115,10){\makebox(0,0)[cc]{$\bullet$}}
\put(115,15){\makebox(0,0)[cc]{$\bullet$}}
\put(115,20){\makebox(0,0)[cc]{$\bullet$}}
\put(115,25){\makebox(0,0)[cc]{$\bullet$}}
\put(115,30){\makebox(0,0)[cc]{$\bullet$}}
\put(115,35){\makebox(0,0)[cc]{$\bullet$}}
\dashline{1}(05,05)(05,40)
\dashline{1}(25,40)(05,40)
\dashline{1}(25,40)(25,05)
\dashline{1}(05,05)(25,05)
\dashline{1}(55,05)(55,40)
\dashline{1}(75,40)(55,40)
\dashline{1}(75,40)(75,05)
\dashline{1}(55,05)(75,05)
\dashline{1}(95,05)(95,40)
\dashline{1}(115,40)(95,40)
\dashline{1}(115,40)(115,05)
\dashline{1}(95,05)(115,05)
\drawline(05,35)(25,35)
\drawline(05,30)(25,30)
\drawline(55,35)(75,35)
\drawline(55,30)(75,30)
\drawline(95,35)(115,35)
\drawline(95,30)(115,30)
\drawline(55,25)(75,25)
\drawline(55,20)(75,20)
\drawline(95,25)(115,20)
\drawline(95,20)(115,25)
\linethickness{1pt}
\qbezier(05,10)(10,12.50)(05,15)
\qbezier(55,10)(60,12.50)(55,15)
\qbezier(95,10)(100,12.50)(95,15)
\qbezier(75,10)(70,12.50)(75,15)
\qbezier(115,10)(110,12.50)(115,15)
\qbezier(25,10)(15,17.50)(25,25)
\qbezier(25,15)(20,17.50)(25,20)
\put(85,25){\makebox(0,0)[cc]{$+$}}
\put(40,25){\makebox(0,0)[cc]{$\overset{\varphi}{\longrightarrow}$}}
\end{picture}
\caption{The map $\varphi$ in Theorem~\ref{thm505}}\label{fig507} 
\end{figure}

\subsection{The case $\delta=1$ and $\delta'\neq 0$}\label{s4.2}

The algebra $\mathfrak{R}_n(1,0)$ was considered in the previous subsection and hence here we assume
$\delta'\neq 0$. From Proposition~\ref{prop115} it follows that, mapping $s_i\mapsto s_i$, $p_i\mapsto p_i$ and 
$\varepsilon_i\mapsto \frac{1}{\delta'}\varepsilon_i$ for all $i$ for which the expression makes sense, extends
to an isomorphism $\mathfrak{R}_n(1,\delta')\cong \mathfrak{R}_n(1,1)$ and allows us to concentrate here on the
case $\delta=\delta'=1$ (that is on the case of the usual complex semigroup algebra of the partial Brauer semigroup
from \cite{Mazorchuk95}). 

From Proposition~\ref{prop2} we have $\mathfrak{R}_n=\mathfrak{R}_n^0\oplus\mathfrak{R}_n^1$. From
Section~\ref{s2} we have the functors $\mathrm{F}$ and $\mathrm{F}_{\omega}$ given by \eqref{eqf1} and \eqref{eqf2},
respectively. To continue, we first have to refine some results from Section~\ref{s2}:

\begin{proposition}\label{prop521}
Under the assumptions $\delta=\delta'=1$ we have:

\begin{enumerate}[$($i$)$]
\item\label{prop521.1} For odd $n$ the functor $\mathrm{F}$ induces an equivalence
$\mathfrak{R}_n^0\text{-}\mathrm{mod}\cong \mathfrak{B}_n(\delta-1)\text{-}\mathrm{mod}$.
\item\label{prop521.2} For even $n$ the functor $\mathrm{F}_{\omega}$ induces an equivalence
$\mathfrak{R}_n^1\text{-}\mathrm{mod}\cong \mathfrak{B}_{n-1}(\delta-1)\text{-}\mathrm{mod}$.
\end{enumerate}
\end{proposition}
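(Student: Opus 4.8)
The plan is to imitate the proofs of Propositions~\ref{prop4} and~\ref{prop6}, the only new ingredient being the verification that the relevant idempotent still generates the whole parity summand as a two-sided ideal; this is the point at which oddness of $n$ (resp.\ evenness) must be used. Fix $\delta=\delta'=1$ and $\Bbbk=\mathbb{C}$.

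Consider part (i), with $n$ odd. Proposition~\ref{prop3} only needs $\delta'\in\Bbbk^{\times}$, so it still gives $\hat{u}_n\mathfrak{R}_n\hat{u}_n\cong\mathfrak{B}_n(\delta-1)=\mathfrak{B}_n(0)$; and by \cite{Au} the functor $\mathrm{F}$ restricts to an equivalence between the full subcategory $\mathcal{X}$ of $\mathfrak{R}_n\text{-}\mathrm{mod}$ of modules presented by $\mathrm{add}(\mathfrak{R}_n\hat{u}_n)$ and $\hat{u}_n\mathfrak{R}_n\hat{u}_n\text{-}\mathrm{mod}\cong\mathfrak{B}_n(0)\text{-}\mathrm{mod}$. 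As in the proof of Proposition~\ref{prop4}, $\mathcal{X}=\mathfrak{R}_n^0\text{-}\mathrm{mod}$ if and only if the trace ideal $\mathfrak{R}_n\hat{u}_n\mathfrak{R}_n$ equals $\mathfrak{R}_n^0$; the inclusion $\subseteq$ is automatic since $\hat{u}_n\in\mathfrak{R}_n^0$, and equality holds precisely when $\hat{u}_n$ annihilates no simple $\mathfrak{R}_n^0$-module (the simple modules of the quotient $\mathfrak{R}_n^0/\mathfrak{R}_n\hat{u}_n\mathfrak{R}_n$ are exactly the simple $\mathfrak{R}_n^0$-modules killed by $\hat{u}_n$, and a nonzero finite-dimensional algebra has a simple module). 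So everything reduces to showing that $\hat{u}_n$ kills no simple $\mathfrak{R}_n^0$-module.

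For this I would use the quasi-hereditary structure. By Corollary~\ref{cor14} (applicable since $\Bbbk=\mathbb{C}$ and $\delta'\neq0$) the algebra $\mathfrak{R}_n$ is quasi-hereditary with standard modules $\Delta_{\Bbbk}(\lambda)$, $\lambda\vdash k$, $0\le k\le n$, and simple heads $L(\lambda)$; by the parity considerations of Subsection~\ref{s2.1} a left cell of propagating number $k$ lies in the even summand iff $k\equiv n\ (\mathrm{mod}\ 2)$, so the simple $\mathfrak{R}_n^0$-modules are the $L(\lambda)$ with $|\lambda|\equiv n$, i.e.\ (for $n$ odd) those indexed by partitions of odd integers $\le n$. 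By Proposition~\ref{prop12} and the identity $\hat{u}_n d\neq0\Leftrightarrow d\in\mathcal{L}^{\mathfrak{B}}$ established in its proof, $\mathrm{F}(\Delta_{\Bbbk}(\lambda))=\hat{u}_n\Delta_{\Bbbk}(\lambda)=\Delta_{\Bbbk}^{\mathfrak{B}}(\lambda)\neq0$ for every such $\lambda$, so $\hat{u}_n$ kills no \emph{standard} $\mathfrak{R}_n^0$-module. Since the assignment $S\mapsto\hat{u}_nS$ is a bijection from the simple $\mathfrak{R}_n^0$-modules $S$ with $\hat{u}_nS\neq0$ onto the simple $\hat{u}_n\mathfrak{R}_n\hat{u}_n$-modules, the claim $\mathfrak{R}_n\hat{u}_n\mathfrak{R}_n=\mathfrak{R}_n^0$ reduces to the numerical identity
\[
\#\{\text{simple }\mathfrak{R}_n^0\text{-modules}\}=\#\{\text{simple }\mathfrak{B}_n(0)\text{-modules}\}.
\]
The left side is the number of partitions of odd integers $\le n$. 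For the right side one uses that $n$ is odd: the only cell label on which the cellular form of $\mathfrak{B}_m(0)$ may vanish is the empty partition, and $\varnothing$ labels a cell only when $m$ is even; hence for $m=n$ odd every Brauer--Specht module $\Delta_{\Bbbk}^{\mathfrak{B}}(\lambda)$ has nonzero form, so by \cite{Martin08-9} (equivalently by the cellular theory of \cite{GL}) the simple $\mathfrak{B}_n(0)$-modules are also indexed by partitions of odd integers $\le n$. The two counts agree, hence no simple is killed, hence $\mathfrak{R}_n\hat{u}_n\mathfrak{R}_n=\mathfrak{R}_n^0$, and $\mathrm{F}$ induces the asserted equivalence.

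Part (ii) is proved identically, with $\mathrm{F}$, $\hat{u}_n$ and Propositions~\ref{prop3},~\ref{prop4} replaced throughout by $\mathrm{F}_\omega$, $\omega=\omega_{\{1\}}$ and Propositions~\ref{prop5},~\ref{prop6}: here $\omega\mathfrak{R}_n\omega\cong\mathfrak{B}_{n-1}(\delta-1)=\mathfrak{B}_{n-1}(0)$, the simple $\mathfrak{R}_n^1$-modules are the $L(\lambda)$ with $|\lambda|\not\equiv n$ (so $|\lambda|$ odd and $\le n-1$ since $n$ is even), and as $n-1$ is odd the simple $\mathfrak{B}_{n-1}(0)$-modules are indexed by the same set, so $\omega$ kills no simple $\mathfrak{R}_n^1$-module. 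The main obstacle is exactly the step ``$\hat{u}_n$ (resp.\ $\omega$) kills no simple'', i.e.\ that the trace ideal fills out the whole parity summand; it is essential that we treat the summand in which every diagram carries a propagating line — the even one for $n$ odd, the odd one for $n$ even — so that the companion is a Brauer algebra of \emph{odd} rank, whose cell forms are all nonzero; the counting argument would break down for the other summand (which is governed by the degenerate module $\Delta^{\mathfrak{B}}(\varnothing)$ of a Brauer algebra at parameter $0$). A direct diagrammatic proof in the style of Proposition~\ref{prop4} is conceivable but less attractive, since the identity $(\delta-1)^k\omega_{\{1,\dots,2k\}}=\langle x\rangle\hat{u}_n\langle x^\star\rangle$ degenerates to $0=0$ at $\delta=1$, and one would have to re-route the $k$ vanishing closed loops through the spare propagating line, apparently at the cost of a delicate induction on the number of singletons.
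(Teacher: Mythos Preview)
Your argument is correct and complete, but it is genuinely different from the paper's.

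The paper keeps the proof entirely diagrammatic: it simply observes that the only obstruction in the proofs of Propositions~\ref{prop4} and~\ref{prop6} is that the identity
\[
(\delta-1)^{k}\,\omega_{\{1,\dots,2k\}}=\langle x\rangle\,\hat{u}_n\,\langle x^{\star}\rangle
\]
degenerates to $0=0$ at $\delta=1$, and then removes the obstruction by rewriting $x$. When $n$ is odd (resp.\ $n$ even, on the odd side) one always has $2k<n$ (resp.\ $2k+1<n$), so there is a genuine extra propagating strand; the paper threads the $k$ would-be closed loops through that strand so that no factor $\delta-1$ is produced, and Figure~\ref{fig522} is the whole argument. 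So the ``re-routing'' you anticipated in your last paragraph is exactly what the paper does, but it requires no induction: one modified picture yields $\omega_{\{1,\dots,2k\}}\in\mathfrak{R}_n\hat{u}_n\mathfrak{R}_n$ directly.

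Your route replaces this explicit construction by a counting argument: using Corollary~\ref{cor14} (quasi-heredity of $\mathfrak{R}_n$ for $\delta'\neq 0$) to enumerate the simples of $\mathfrak{R}_n^{0}$, the standard idempotent bijection $S\mapsto \hat{u}_n S$ onto simples of $\hat{u}_n\mathfrak{R}_n\hat{u}_n\cong\mathfrak{B}_n(0)$, and the fact that for \emph{odd} $m$ every Brauer--Specht form of $\mathfrak{B}_m(0)$ is nonzero (the only degenerate label being $\varnothing$, which does not occur). Matching the two counts forces $\hat{u}_n$ to kill no simple, hence $\mathfrak{R}_n\hat{u}_n\mathfrak{R}_n=\mathfrak{R}_n^{0}$. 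This is clean and conceptually explains \emph{why} the parity hypothesis matters (it keeps one on the side where $\varnothing$ is absent), whereas the paper's proof explains \emph{how} the spare propagating line is used. Each approach buys something: the paper's is self-contained and constructive; yours imports more structure (quasi-heredity, cellular forms for $\mathfrak{B}_m(0)$) but avoids any diagram manipulation.

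One small remark: the sentence ``$\hat{u}_n$ kills no \emph{standard} $\mathfrak{R}_n^{0}$-module'' is correct but not used in your logical chain; the counting argument already suffices. You could drop it, or alternatively strengthen it to a direct proof by noting that $\mathrm{F}$ sends standards to standards with matching labels (Proposition~\ref{prop12}), so the induced bijection on simples is label-preserving; but the numerical equality is all you actually need.
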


\begin{proof}
We need only to modify the proofs of Propositions~\ref{prop4} and \ref{prop6} to show that 
in the case $n-2k\neq 0$ or $n-2k-1\neq 0$, respectively, the corresponding elements
$\omega_{\{1,2,\dots,2k\}}$ and $\omega_{\{1,2,\dots,2k+1\}}$ belong to $\mathfrak{R}_n\hat{u}_n\mathfrak{R}_n$
and $\mathfrak{R}_n{\omega}\mathfrak{R}_n$, respectively. How this can be done is illustrated
by Figure~\ref{fig522}, we leave the details to the reader.
\end{proof}

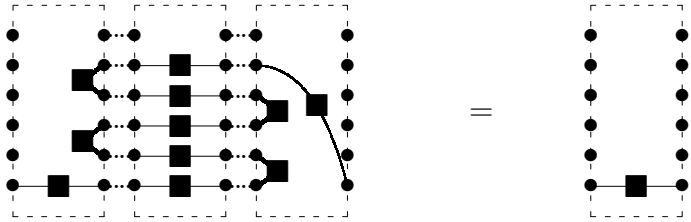
\begin{figure}
\special{em:linewidth 0.4pt} \unitlength 0.80mm
\begin{picture}(125,45)
\put(05,10){\makebox(0,0)[cc]{$\bullet$}}
\put(05,15){\makebox(0,0)[cc]{$\bullet$}}
\put(05,20){\makebox(0,0)[cc]{$\bullet$}}
\put(05,25){\makebox(0,0)[cc]{$\bullet$}}
\put(05,30){\makebox(0,0)[cc]{$\bullet$}}
\put(05,35){\makebox(0,0)[cc]{$\bullet$}}
\put(20,10){\makebox(0,0)[cc]{$\bullet$}}
\put(20,15){\makebox(0,0)[cc]{$\bullet$}}
\put(20,20){\makebox(0,0)[cc]{$\bullet$}}
\put(20,25){\makebox(0,0)[cc]{$\bullet$}}
\put(20,30){\makebox(0,0)[cc]{$\bullet$}}
\put(20,35){\makebox(0,0)[cc]{$\bullet$}}
\put(25,10){\makebox(0,0)[cc]{$\bullet$}}
\put(25,15){\makebox(0,0)[cc]{$\bullet$}}
\put(25,20){\makebox(0,0)[cc]{$\bullet$}}
\put(25,25){\makebox(0,0)[cc]{$\bullet$}}
\put(25,30){\makebox(0,0)[cc]{$\bullet$}}
\put(25,35){\makebox(0,0)[cc]{$\bullet$}}
\put(40,10){\makebox(0,0)[cc]{$\bullet$}}
\put(40,15){\makebox(0,0)[cc]{$\bullet$}}
\put(40,20){\makebox(0,0)[cc]{$\bullet$}}
\put(40,25){\makebox(0,0)[cc]{$\bullet$}}
\put(40,30){\makebox(0,0)[cc]{$\bullet$}}
\put(40,35){\makebox(0,0)[cc]{$\bullet$}}
\put(45,10){\makebox(0,0)[cc]{$\bullet$}}
\put(45,15){\makebox(0,0)[cc]{$\bullet$}}
\put(45,20){\makebox(0,0)[cc]{$\bullet$}}
\put(45,25){\makebox(0,0)[cc]{$\bullet$}}
\put(45,30){\makebox(0,0)[cc]{$\bullet$}}
\put(45,35){\makebox(0,0)[cc]{$\bullet$}}
\put(60,10){\makebox(0,0)[cc]{$\bullet$}}
\put(60,15){\makebox(0,0)[cc]{$\bullet$}}
\put(60,20){\makebox(0,0)[cc]{$\bullet$}}
\put(60,25){\makebox(0,0)[cc]{$\bullet$}}
\put(60,30){\makebox(0,0)[cc]{$\bullet$}}
\put(60,35){\makebox(0,0)[cc]{$\bullet$}}
\put(100,10){\makebox(0,0)[cc]{$\bullet$}}
\put(100,15){\makebox(0,0)[cc]{$\bullet$}}
\put(100,20){\makebox(0,0)[cc]{$\bullet$}}
\put(100,25){\makebox(0,0)[cc]{$\bullet$}}
\put(100,30){\makebox(0,0)[cc]{$\bullet$}}
\put(100,35){\makebox(0,0)[cc]{$\bullet$}}
\put(115,10){\makebox(0,0)[cc]{$\bullet$}}
\put(115,15){\makebox(0,0)[cc]{$\bullet$}}
\put(115,20){\makebox(0,0)[cc]{$\bullet$}}
\put(115,25){\makebox(0,0)[cc]{$\bullet$}}
\put(115,30){\makebox(0,0)[cc]{$\bullet$}}
\put(115,35){\makebox(0,0)[cc]{$\bullet$}}
\dashline{1}(05,05)(05,40)
\dashline{1}(20,40)(05,40)
\dashline{1}(20,40)(20,05)
\dashline{1}(05,05)(20,05)
\dashline{1}(25,05)(25,40)
\dashline{1}(40,40)(25,40)
\dashline{1}(40,40)(40,05)
\dashline{1}(25,05)(40,05)
\dashline{1}(45,05)(45,40)
\dashline{1}(60,40)(45,40)
\dashline{1}(60,40)(60,05)
\dashline{1}(45,05)(60,05)
\dashline{1}(100,05)(100,40)
\dashline{1}(115,40)(100,40)
\dashline{1}(115,40)(115,05)
\dashline{1}(100,05)(115,05)
\dottedline[.]{1}(20.00,10.00)(25.00,10.00)
\dottedline[.]{1}(20.00,15.00)(25.00,15.00)
\dottedline[.]{1}(20.00,20.00)(25.00,20.00)
\dottedline[.]{1}(20.00,25.00)(25.00,25.00)
\dottedline[.]{1}(20.00,30.00)(25.00,30.00)
\dottedline[.]{1}(20.00,35.00)(25.00,35.00)
\dottedline[.]{1}(40.00,10.00)(45.00,10.00)
\dottedline[.]{1}(40.00,15.00)(45.00,15.00)
\dottedline[.]{1}(40.00,20.00)(45.00,20.00)
\dottedline[.]{1}(40.00,25.00)(45.00,25.00)
\dottedline[.]{1}(40.00,30.00)(45.00,30.00)
\dottedline[.]{1}(40.00,35.00)(45.00,35.00)
\drawline(05,10)(20,10)
\drawline(25,10)(40,10)
\qbezier(45,30)(55,30)(60,10)
\drawline(100,10)(115,10)
\drawline(25,15)(40,15)
\drawline(25,20)(40,20)
\drawline(25,25)(40,25)
\drawline(25,30)(40,30)
\linethickness{1pt}
\qbezier(20,15)(15,17.50)(20,20)
\qbezier(20,25)(15,27.50)(20,30)
\qbezier(45,15)(50,12.50)(45,10)
\qbezier(45,25)(50,22.50)(45,20)
\put(82,22){\makebox(0,0)[cc]{$=$}}
\put(12.50,10){\makebox(0,0)[cc]{$\blacksquare$}}
\put(32.50,10){\makebox(0,0)[cc]{$\blacksquare$}}
\put(55.00,23.50){\makebox(0,0)[cc]{$\blacksquare$}}
\put(107.50,10){\makebox(0,0)[cc]{$\blacksquare$}}
\put(32.50,15){\makebox(0,0)[cc]{$\blacksquare$}}
\put(32.50,20){\makebox(0,0)[cc]{$\blacksquare$}}
\put(32.50,25){\makebox(0,0)[cc]{$\blacksquare$}}
\put(32.50,30){\makebox(0,0)[cc]{$\blacksquare$}}
\put(16.50,17.50){\makebox(0,0)[cc]{$\blacksquare$}}
\put(16.50,27.50){\makebox(0,0)[cc]{$\blacksquare$}}
\put(48.50,12.50){\makebox(0,0)[cc]{$\blacksquare$}}
\put(48.50,22.50){\makebox(0,0)[cc]{$\blacksquare$}}
\end{picture}
\caption{Illustration to the proof of Proposition~\ref{prop521}}\label{fig522} 
\end{figure}

For even $n$ the functor $\mathrm{F}$ does not induce an equivalence
between 
$\mathfrak{R}_n^0\text{-}\mathrm{mod}$ 
and $\mathfrak{B}_n(0)\text{-}\mathrm{mod}$. In fact, the latter two categories are not equivalent as
the first one has one extra simple module, corresponding to the empty
partition of $0$. 
(Corresponding to the striking result above, that $\mathfrak{R}_n$ remains
quasihereditary for all $\delta$, while $\mathfrak{B}_n$ does not.)
The functor 
$\mathrm{F}$ is still full, faithful and dense on $\mathrm{add}(\mathfrak{R}_n\hat{u}_n)$ and hence
for any indecomposable direct summands $P$ and $Q$ of $\mathfrak{R}_n\hat{u}_n$ we have
$\mathrm{Hom}_{\mathfrak{R}_n} (P,Q)\cong \mathrm{Hom}_{\mathfrak{B}_n(0)}(\mathrm{F}\,P,\mathrm{F}\,Q)$,
the latter being known by \cite{Martin08-9}. This determines a maximal square submatrix of the Cartan matrix for 
$\mathfrak{R}_n$. The remaining row and column are the ones corresponding to the projective cover $P_{\varnothing}$ 
of the one-dimensional trivial module (the module on which all diagrams act as the identity) since 
$\hat{u}_n$ obviously annihilates the simple top of this module. 

Let $\mathcal{L}$ denote the left cell of the diagram $u_{\underline{n}}$ consisting only of singletons. 
Then $P_{\varnothing}\cong \mathbf{C}_{\mathcal{L}}$. As $u_{\underline{n}}d u_{\underline{n}}=u_{\underline{n}}$
for any $d\in\mathtt{R}_n$, it follows that $\mathbf{C}_{\mathcal{L}}$ has $1$-dimensional endomorphism
ring implying that the corresponding diagonal  entry in the Cartan matrix is $1$. 

Note that for $\delta=\delta'=1$ our algebra $\mathfrak{R}_n$ is the semigroup algebra of the partial Brauer
semigroup (see \cite{Mazorchuk95}). The involution $\star$ stabilizes all idempotents $u_X$, $X\subset\underline{n}$, 
and induces the usual involution (taking the inverse) on the
corresponding maximal subgroups. 
General 
semigroup representation theory (see \cite{CP,GM,GMS}) then says that $\star$ 
induces a simple preserving contravariant self-equivalence on 
$\mathfrak{R}_n\text{-}\mathrm{mod}$, which implies that the Cartan matrix of $\mathfrak{R}_n$
is symmetric. Thus it remains to determine the composition multiplicities of all other simple modules in 
$\mathbf{C}_{\mathcal{L}}$. As the functor $\mathrm{F}$ sends Specht modules for $\mathfrak{R}_n$ to
Specht module for $\mathfrak{B}_n(0)$, by Proposition~\ref{prop12} it equates 
these composition multiplicities to the corresponding
composition multiplicities for $\mathfrak{B}_n(0)$ (even though the corresponding Specht module is
{\em not} projective there). The latter are known by \cite{Martin08-9}, using Brauer
reciprocity.

For odd $n$ we have exactly the same situation with the functor $\mathrm{F}_{\omega}$. This discussion 
completely determines the Cartan matrix of $\mathfrak{R}_n$.

\subsection{The Temperley-Lieb algebra and its partialization}\label{s4.3}

All constructions and results of the present paper transfer mutatis mutandis to the case of the (partial)
Temperley-Lieb algebra, that is a subalgebra of the (partial) Brauer algebra with basis given by planar
diagrams. Some further related algebras can be found in the recent preprint \cite{BH}.

\vspace{0.3cm}

\noindent
P.~M.: Department of Pure Mathematics, University of Leeds, Leeds, 
LS2 9JT, UK, e-mail: {\tt ppmartin\symbol{64}maths.leeds.ac.uk }
\vspace{0.3cm}

\noindent
V.~M: Department of Mathematics, Uppsala University, Box. 480,
SE-75106, Uppsala, SWEDEN, email: {\tt mazor\symbol{64}math.uu.se}

\end{document}